\numberwithin{figure}{section}
\numberwithin{table}{section}
\numberwithin{equation}{section}
\newtheorem{theorem}{Theorem}[section]
\newtheorem{lemma}[theorem]{Lemma}
\newtheorem{proposition}[theorem]{Proposition}
\newtheorem{remark}[theorem]{Remark}
\newcommand{\Rm}[1]{
  \textup{\uppercase\expandafter{\romannumeral#1}}
}
\let\olddefinition\definition
\renewcommand{\definition}{\olddefinition\normalfont}
\let\oldremark\remark
\renewcommand{\remark}{\oldremark\normalfont}
\newcommand{\Bf}{\mathfrak{B}^{(\alpha)}}
\newcommand{\Bff}{\mathfrak{B}^{(1)}}
\newcommand{\C}{\mathbb{C}}
\newcommand{\etab}{\boldsymbol{\eta}}
\newcommand{\F}{\mathcal{F}}
\newcommand{\fp}{\mathrm{p.f.}}
\renewcommand{\L}{\mathbf{L}}
\newcommand{\n}{\mathbf{n}}
\newcommand{\N}{\mathbb{N}}
\newcommand{\M}{\mathcal{M}}
\renewcommand{\O}{\mathcal{O}}
\newcommand{\R}{\mathbb{R}}
\newcommand{\Rc}{\mathcal{R}}
\newcommand{\Rf}{\mathfrak{R}}
\newcommand{\T}{\mathbb{T}}
\newcommand{\Tb}{\mathbf{T}}
\renewcommand{\u}{\mathbf{u}}
\newcommand{\x}{\mathbf{x}}
\newcommand{\X}{\mathbf{X}}
\newcommand{\Z}{\mathbb{Z}}
\newcommand{\Time}{T}
\def\vp{\varphi}
\def\ve{\varepsilon}
\def\px{\partial_x}
\def\pt{\partial_t}
\def\moddy{\left|\partial_y\right|}
\newcommand{\diff}{\,\mathrm{d}}
\DeclareMathOperator{\hilbert}{\mathbf{H}}
\newcommand{\s}{\mathbf{s}}
\DeclareMathOperator{\supp}{\text{supp}}
\DeclareMathOperator{\sgn}{\mathrm{sgn}}
\newcommand\reallywidehat[1]{%
\savestack{\tmpbox}{\stretchto{%
  \scaleto{%
    \scalerel*[\widthof{\ensuremath{#1}}]{\kern-.6pt\bigwedge\kern-.6pt}%
    {\rule[-\textheight/2]{1ex}{\textheight}}
  }{\textheight}%
}{0.5ex}}%
\stackon[1pt]{#1}{\tmpbox}%
}
\newcommand\reallywidetilde[1]{%
\savestack{\tmpbox}{\stretchto{%
  \scaleto{%
    \scalerel*[\widthof{\ensuremath{#1}}]{\kern-.4pt\AC\kern-.4pt}%
    {\rule[-\textheight/2]{1ex}{\textheight}}
  }{\textheight}%
}{0.5ex}}%
\stackon[1pt]{#1}{\tmpbox}%
}
\def\bel{\begin{equation}\label}
\def\beq{\begin{equation}}
\def\eeq{\end{equation}}
\def\bega{\begin{array}}
\def\enda{\end{array}}
\renewcommand{\vec}[1]{\mathbf{#1}}
\author{John K. Hunter}
\address{Department of Mathematics, University of California at Davis}
\email{jkhunter@ucdavis.edu}
\thanks{JKH was supported by the NSF under grant numbers DMS-1616988 and DMS-1908947}
\author{Jingyang Shu}
\address{Department of Mathematics, University of California at Davis}
\email{jyshu@ucdavis.edu}
\author{Qingtian Zhang}
\address{Department of Mathematics, West Virginia University}
\email{qingtian.zhang@mail.wvu.edu}
\title[Two-Front GSQG Equations]{
Two-Front Solutions of the SQG Equation and its Generalizations}
\date{\today}
\begin{document}

\begin{abstract}
The  generalized surface quasi-geostrophic (GSQG) equations are transport equations for an active scalar that depend on a parameter $0<\alpha \le 2$. Special cases are the two-dimensional incompressible Euler equations ($\alpha = 2$) and the surface quasi-geostrophic (SQG) equations ($\alpha = 1$).
We derive contour-dynamics equations for a class of two-front solutions of the GSQG equations when the fronts are a graph. Scalar reductions of these equations include ones that describe a single front in the presence of a rigid, flat boundary. We use the contour dynamics equations to determine the linearized stability of the GSQG shear flows that correspond to two flat fronts. We also prove local-in-time existence and uniqueness for large, smooth solutions of the two-front equations in the parameter regime $1<\alpha\le 2$, and  small, smooth solutions in the parameter regime $0<\alpha\le 1$.
\end{abstract}

\maketitle
\tableofcontents

\section{Introduction}

In this paper, we derive contour dynamics equations for the motion of two fronts in a class of piecewise constant solutions of the incompressible Euler, surface quasi-geostrophic (SQG), and generalized surface quasi-geostropic (GSQG) equations; these two-front solutions are described in more detail in Section~\ref{sec:twofront} below. We also prove local existence and uniqueness theorems for the resulting front-equations.

The GSQG equations are a family of active scalar equations in two spatial dimensions, depending on a parameter $0 < \alpha \leq 2$,
which arise naturally from fluid dynamics. They consist of
a transport equation for a scalar function $\theta \colon \R^2 \times \R \to \R$ that is transported by a divergence-free velocity field $\u \colon \R^2 \times \R \to \R^2$ which depends non-locally on $\theta$:
\begin{align}
\label{gsqg}
\begin{split}
& \theta_t + \u \cdot \nabla \theta = 0,\qquad
(- \Delta)^{\alpha / 2} \u = \nabla^\perp \theta.
\end{split}
\end{align}
Here, $\x = (x, y)$ is the spatial variable, $\nabla^\perp = (- \partial_y, \partial_x)$ is the perpendicular gradient,
and $(- \Delta)^{\alpha / 2}$ is the Fourier multiplier with symbol $(\xi^2+\eta^2)^{\alpha/2}$.
Alternatively, one can introduce a stream function $\psi : \R^2\times \R \to \R$, and write
\[
\u = \nabla^\perp \psi,\qquad (- \Delta)^{\alpha / 2}\psi = \theta.
\]

When $\alpha = 2$, equation \eqref{gsqg} is the vorticity-stream function formulation of the two-dimensional, incompressible Euler equation
for an inviscid fluid, and the scalar $\theta$ is the negative of the vorticity \cite{MB02}. It has long been established that the 2D Euler equation has global smooth solutions \cite{Hol33, Wol33}. Further results on the 2D Euler equation can be found in \cite{MB02, MP94} and the references therein.

When $\alpha = 1$, equation \eqref{gsqg} is the (inviscid) SQG equation. This equation describes the motion of quasi-geostrophic flows confined near a surface \cite{HPGS95, Lap17, Maj03, Ped87}, and $\theta$ is usually referred to as the potential temperature or the surface buoyancy. From an analytical point of view, the SQG equation has many similar features to the 3D incompressible Euler equation \cite{CMT94a, CMT94b}. In particular, the scalar $\theta$ has the same dimensions as the velocity field $\u$ that transports it.

The SQG equation has global weak solutions in $L^p$-spaces ($p > 4 / 3$) \cite{Mar08, Res95}, and convex integration shows that low-regularity weak solutions need not be unique \cite{BSV}. A class of nontrivial global smooth solutions is constructed in \cite{CCG}, but --- as for the 3D incompressible Euler equation --- the question of whether general smooth solutions of the SQG equation remain smooth for all time or form singularities in finite time is open.

The other cases in the family, with $0 < \alpha < 1$ or $1<\alpha < 2$, correspond to a natural generalization of the Euler and SQG equations. Local existence of smooth solutions of these equations is proved in \cite{CCCGW12}, but the global existence of smooth solutions with general initial data is not known for any $0<\alpha <2$.

\subsection{Patch and front solutions}
\label{patch_front}
Equation \eqref{gsqg} has  a class of piecewise constant solutions of the form
\begin{align}
\label{piecewisetheta}
\theta(\x, t) = \sum_{k = 1}^N \theta_k \mathds{1}_{\Omega_k(t)}(\x),
\end{align}
where $N \geq 2$ is a positive integer, $\theta_1, \dotsc, \theta_N\in \R$ are constants, and $\Omega_1(t),\dotsc, \Omega_N(t) \subset \R^2$ are disjoint domains such that
\[
\bigcup_{k = 1}^N \overline{\Omega_k(t)} = \R^2,
\]
and their boundaries $\partial \Omega_1(t), \dots, \partial \Omega_N(t)$ are smooth curves, whose components either coincide or are a positive distance apart.
In \eqref{piecewisetheta}, $\mathds{1}_{\Omega_k(t)}$ denotes the indicator function of $\Omega_k(t)$.
The transport equation \eqref{gsqg} preserves the form of these weak solutions, at least locally in time, and to study their evolution, we only need to understand the dynamics of the boundaries $\partial \Omega_k(t)$.

Depending on the number of regions and the boundedness of each region, we distinguish the following three different types of solutions (see Figure \ref{fig:patch+fronts}). In this paper, we will be concerned with the third type, which we call two-front solutions.

\begin{figure}[h]
\centering
\begin{subfigure}[]{0.45\textwidth}
\includegraphics[width=\textwidth]{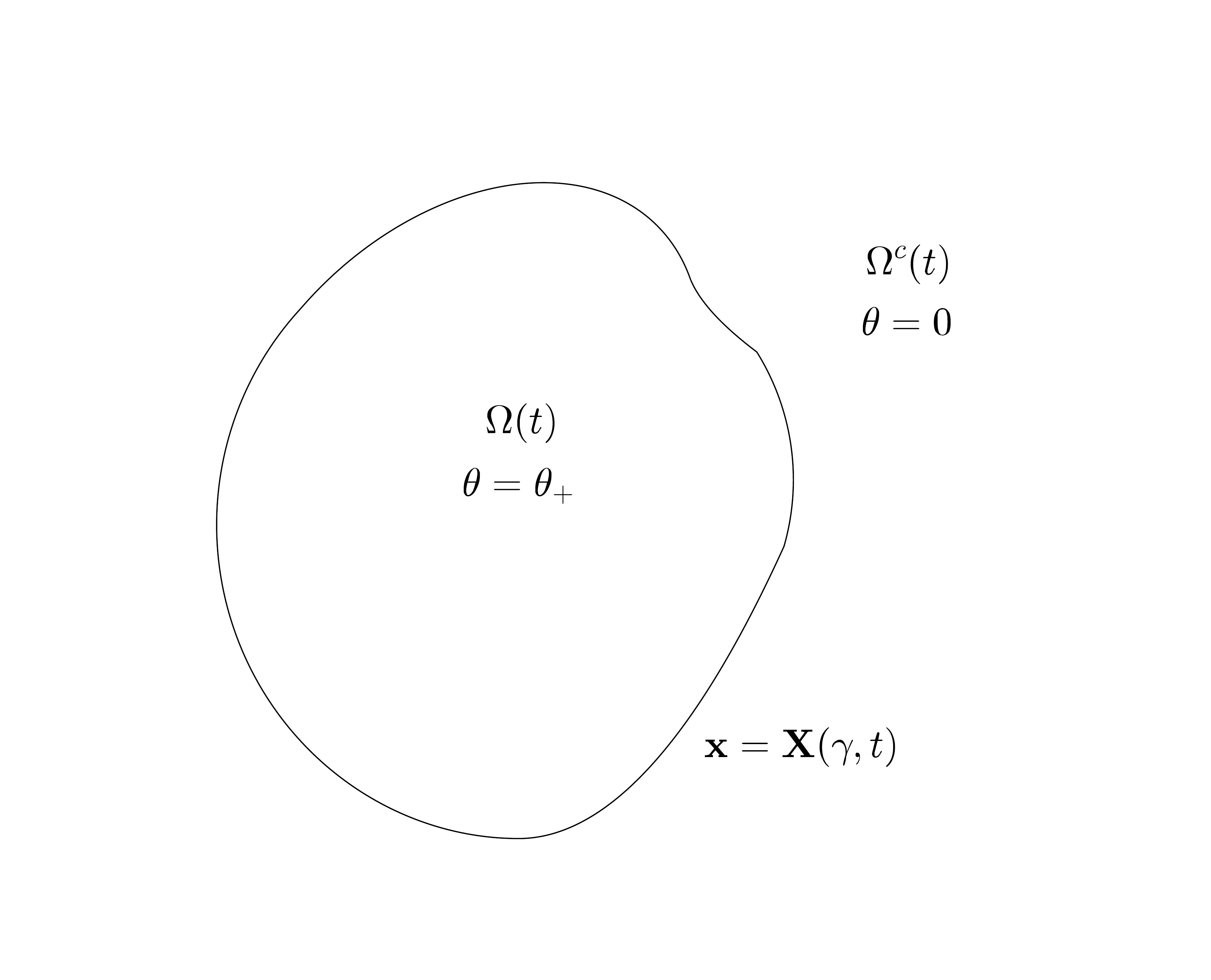}
\caption{Patch problem with $N = 2$.}
\end{subfigure}~
\begin{subfigure}[]{0.45\textwidth}
\includegraphics[width=\textwidth]{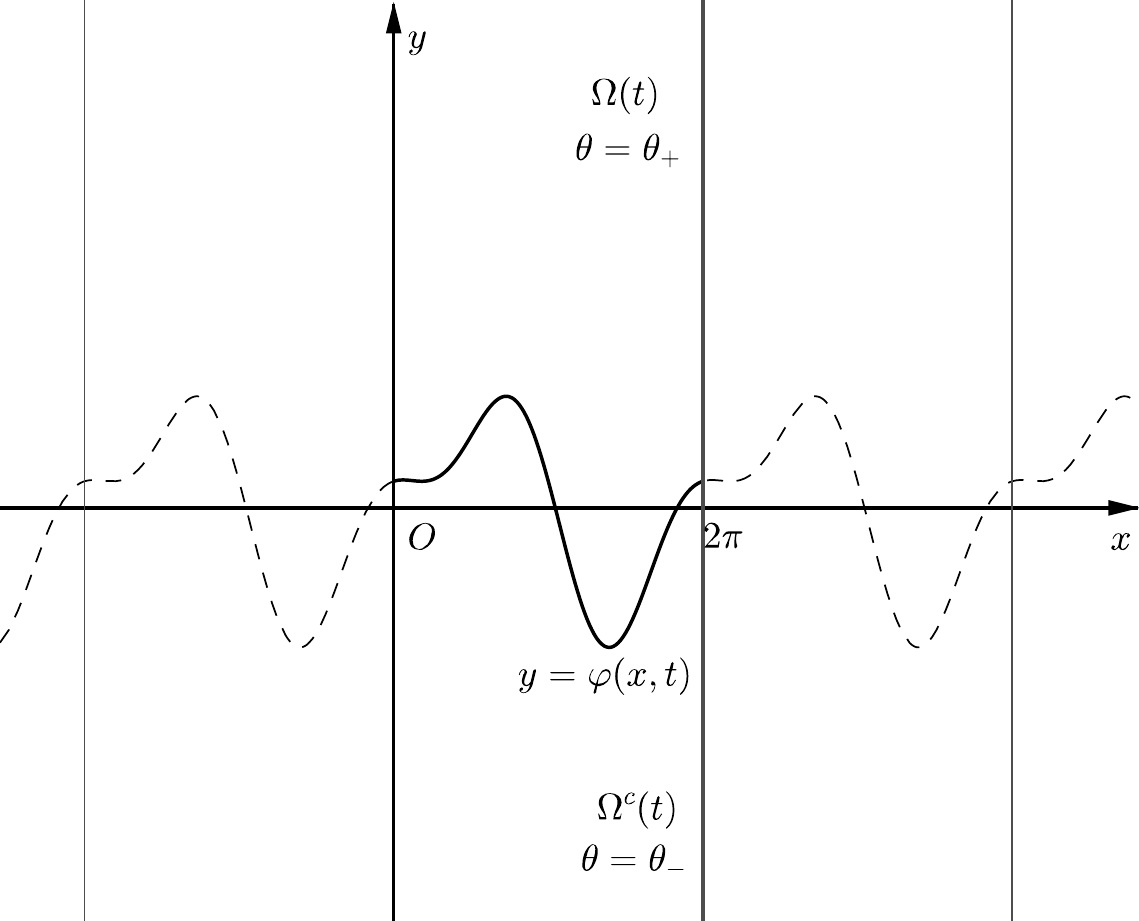}
\caption{Spatially periodic front problem.}
\end{subfigure}\\
\begin{subfigure}[]{0.45\textwidth}
\includegraphics[width=\textwidth]{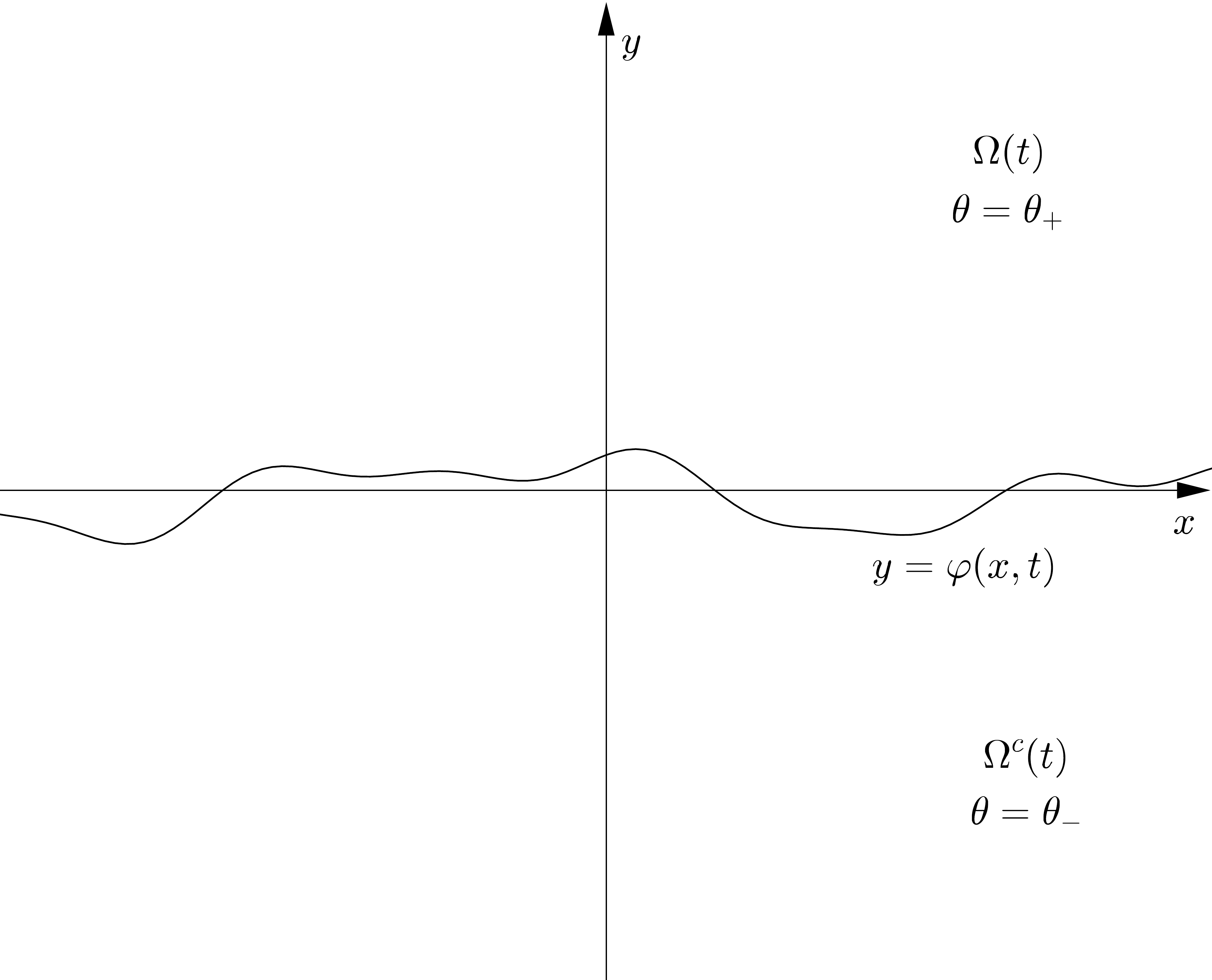}
\caption{Non-periodic front problem.}
\end{subfigure}~
\begin{subfigure}[]{0.45\textwidth}
\includegraphics[width=\textwidth]{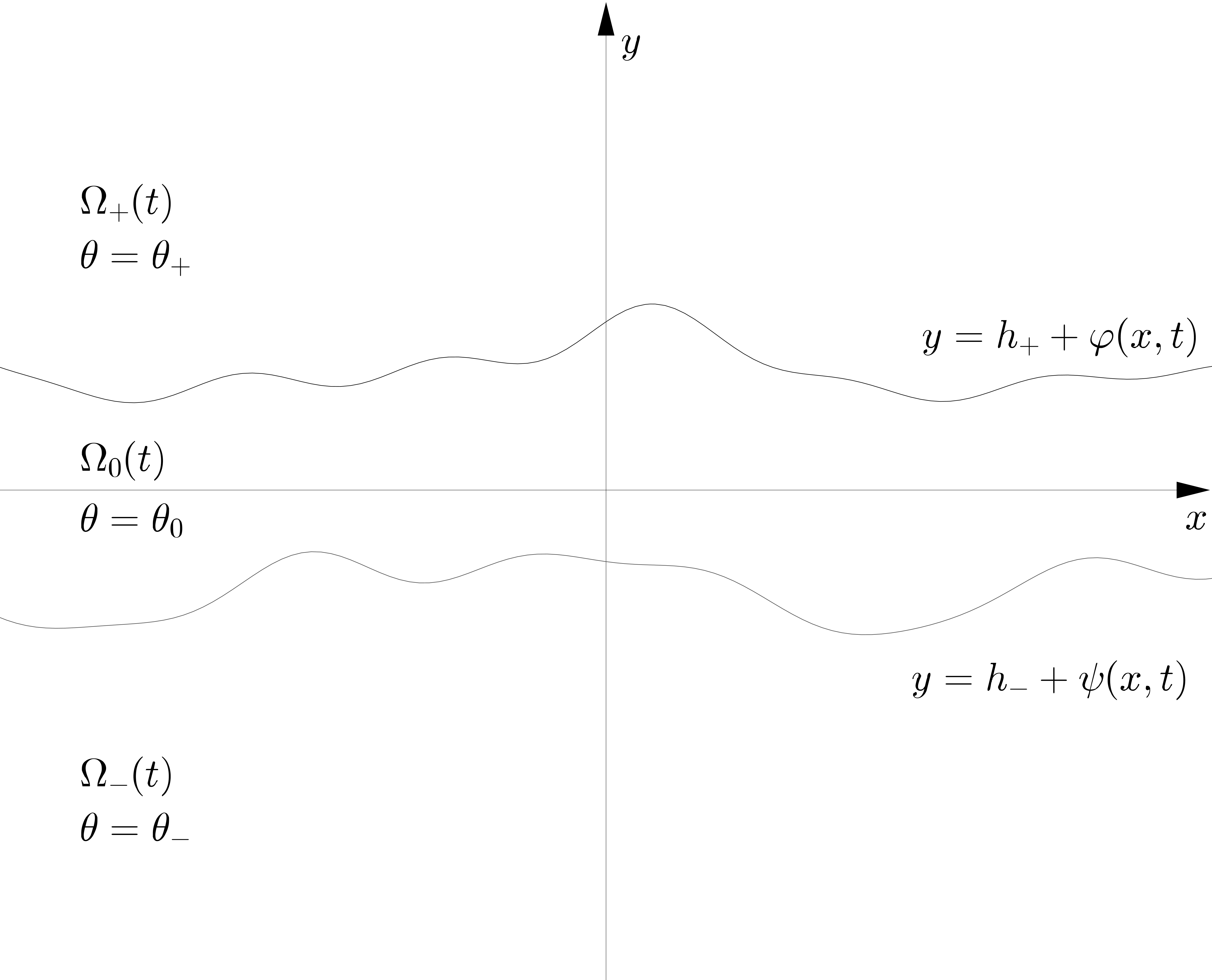}
\caption{Two-front problem.}
\end{subfigure}
\caption{Different types of patch and front problems.}\label{fig:patch+fronts}
\end{figure}

\subsubsection{Patches}
Equation \eqref{piecewisetheta} is a patch solution if it satisfies the following assumptions:
\begin{enumerate}
\item $N \geq 2$;
\item $\theta_N = 0$, but $\theta_k \in \R \setminus \{0\}$ for each $1 \leq k \leq N - 1$;
\item for each $1 \leq k \leq N - 1$, the region $\Omega_k(t)$ is bounded, and its boundary $\partial \Omega_k(t)$
is a smooth, simple, closed curve that is diffeomorphic to the circle $\T$;
\item the region $\Omega_N(t)$ is unbounded.
\end{enumerate}

Under these assumptions, $\theta$ has compact support and contour dynamics equations for the motion of the patches are straightforward to derive, as was first done by Zabusky et.~al.~\cite{Zabusky} for vortex-patch solutions of the Euler equation. The 2D Euler equation has global weak solutions with vorticity in $L^1(\R^2)\cap L^\infty(\R^2)$ \cite{ MB02,Yud63}, and smooth vortex patch boundaries remain smooth and non-self-intersecting for all times \cite{BC93, Che93, Che98}. Some special types of nontrivial global-in-time smooth vortex patch solutions are constructed in \cite{Bur82, CCG16b, dlHHMV16, HM16, HM17, HMV13}.

Local well-posedness of the contour dynamics equations for SQG and GSQG patches is proved in \cite{CCCGW12, CCG18, Gan08, GP18p}. The question of whether finite-time singularities can form in smooth boundaries of SQG or GSQG patches remains open, but it is proved in \cite{GS14} that splash singularities cannot form, and some particular classes of nontrivial global solutions for SQG and GSQG patches have been shown to exist \cite{CCG16a, dlHHH16, Gom18p, HH15, HM17}.

The local existence of smooth GSQG patches in the presence of a rigid boundary is shown in \cite{GP18p, KYZ17} for a range of $\alpha$, and the formation of finite-time singularities is proved for a range of $\alpha$ close to $2$. By contrast, vortex patches in this setting (with $\alpha=2$) have global regularity \cite{KRYZ16}.

Numerical solutions for vortex patches show that, although their boundaries remain smooth globally in time, they form extraordinarily thin, high-curvature filaments \cite{dritschel1, dritschel2}. On the other hand,
numerical solutions for SQG patches suggest that complex, self-similar singularities can form in the boundary of a single patch \cite{SD14} and provide evidence that two separated SQG patches can touch in finite time \cite{CFMR05}.

\subsubsection{Fronts}
Equation \eqref{piecewisetheta} is a front solution if it satisfies the following assumptions:
\begin{enumerate}
\item $N = 2$;
\item $\theta_1, \theta_2 \in \R$ are distinct constants;
\item both $\Omega_1(t)$ and $\Omega_2(t)$ are unbounded and they share a boundary which is a simple, smooth curve diffeomorphic to $\R$.
\end{enumerate}

When $1 \leq \alpha \leq 2$, the kernel of the (generalized) Biot-Savart law that recovers the velocity field $\u$ from the scalar $\theta$ decays too slowly at infinity for the standard potential representation of $\u$ to converge. This differentiates the patch problems and the front problems, since there are no convergence issues at infinity in the case of patches with compactly supported $\theta$. A procedure to derive regularized equations for a single front that is a graph located at $y = \vp(x,t)$ was introduced in \cite{HS18}. As shown in \cite{HSZ19} for the SQG equation, the regularized one-front equation also follows by decomposing the velocity field into an unbounded shear flow and a velocity perturbation that has a standard potential representation, and applying contour dynamics to the velocity perturbation.

The front problem for vorticity discontinuities in the Euler equation is studied in \cite{BH10, Ray1895}. Local existence and uniqueness for spatially periodic SQG fronts is proved for $C^\infty$ solutions in \cite{Rod05} and analytic solutions in \cite{FR11}, while local well-posedness in Sobolev spaces for spatially periodic solutions of a cubically nonlinear approximation of the SQG front equation is proved in \cite{HSZ18}. Almost sharp SQG fronts are studied in \cite{CFR04, FLR12, FR12, FR15}, and smooth $C^\infty$ solutions for spatially periodic GSQG fronts with $1 < \alpha < 2$ also exist locally in time \cite{CFMR05}.

In the non-periodic setting, smooth  solutions to the GSQG front equations with $0 < \alpha < 1$ on $\R$ are shown to exist globally in time for small initial data in \cite{CGI19},
and an analogous result for the  SQG front equation with $\alpha = 1$ is proved in \cite{HSZ18p}.

\subsubsection{Two-fronts}
\label{sec:twofront}
Equation \eqref{piecewisetheta} is a two-front solution if it satisfies the following assumptions:
\begin{enumerate}
\item $N = 3$;
\item $\theta_1, \theta_2, \theta_3 \in \R$ with $\theta_1 \neq \theta_2$ and $\theta_2 \neq \theta_3$;
\item there is a diffeomorphism $\Psi_t \colon \R^2 \to \R^2$, satisfying $\Psi_t\left(\Omega_1(t)\right) = \R \times (1, \infty)$, $\Psi_t\left(\Omega_2(t)\right) = \R \times (-1, 1)$, and $\Psi_t\left(\Omega_3(t)\right) = \R \times (-\infty, -1)$.
\end{enumerate}

This case is the one we study here. We derive equations for the locations of the two fronts and prove well-posedness results for the resulting systems.
From now on, we write $\Omega_+(t) = \Omega_1(t)$, $\Omega_0(t) = \Omega_2(t)$, $\Omega_-(t) = \Omega_3(t)$, with the same subscript changes applying to $\theta_+$, $\theta_0$, $\theta_-$. We also define the jumps in $\theta$ across the fronts, scaled by a convenient factor $g_\alpha$ given in \eqref{GreenF}, by
\begin{equation}
\Theta_+ = g_\alpha\left(\theta_+ - \theta_0\right),\qquad\Theta_- = g_\alpha\left(\theta_0 - \theta_-\right).
\label{defTheta}
\end{equation}
Numerical solutions of the contour dynamics equations for spatially-periodic two-front solutions of the Euler equation and a study of the approximation of vortex sheets by
a thin vortex layer are given in \cite{baker}.

\subsection{Main results}

We consider two-front solutions whose fronts are graphs located at
\[
y = h_+ + \vp(x, t), \qquad y = h_- + \psi(x, t),
\]
where $\vp, \psi \colon \R \times \R_+ \to \R$ denote the perturbations from the flat fronts
$y = h_+$, $y = h_-$, and $h_+ > h_-$. We also write
\begin{equation}
h = \frac{h_+ - h_-}{ 2}.
\label{defh}
\end{equation}

Since the fronts are graphs, they cannot self-intersect, but we also need to require that
the fronts do not intersect each other, which is the case if $\vp(\cdot, t), \psi(\cdot, t) \in L^2(\R)$ satisfy the pointwise condition
\begin{align}
\label{ptwise}
2 h - \psi(x, t) + \vp(x, t) > 0\ \text{for all}\ x \in \R.
\end{align}
This condition corresponds to the chord-arc condition for patches \cite{Gan08}.

As we will see, there are different features for $0 < \alpha < 1$, $\alpha =1$, and $1<\alpha \le 2$, which are a consequence of a loss of local integrability in the restriction of the Riesz potential \cite{Ste70, Ste93} for $(-\Delta)^{-\alpha/2}$ to the front for $0<\alpha \le 1$, leading to an infinite tangential velocity on the front, and a loss of global integrability for $1\le \alpha \le 2$, leading to an unbounded velocity far from the front. The nonlinear terms in the front equations also behave differently, losing derivatives if $0<\alpha \le 1$, and having good, hyperbolic-type energy estimates if $1<\alpha \le 2$  (see Table~\ref{tab:alpha}).

\begin{table}[!htbp]
\caption{Behavior of front solutions in different $\alpha$-regimes}
\begin{tabular}{|c|c|c|c|}

\hline

$\alpha$&Far-Field Velocity& Tangential Velocity&Derivative Loss\\

\hline

(0,1)&Bounded&Unbounded&Yes\\

\hline

1&Unbounded\footnotemark[1]&Unbounded&Yes\\

\hline

(1,2]&Unbounded\footnotemark[1]&Bounded&No\\

\hline

\end{tabular}
\label{tab:alpha}
\end{table}
\footnotetext[1]{The far-field velocity of the two-front solutions is bounded if $\Theta_+ = - \Theta_-$.}

The equations describing the dynamics of the fronts are given by \eqref{eulersys} for Euler, \eqref{sqgsys} for SQG,
and \eqref{gsqgsys12} for GSQG.
Symmetric (with $\Theta_+ = \Theta_-$) and anti-symmetric (with $\Theta_+ = -\Theta_-$) scalar reductions of these equations are given in \eqref{symm-gsqg} and \eqref{anti-symm-gsqg}, respectively.

\subsubsection{Local well-posedness}
We briefly summarize our local well-posedness results for the front equations. As explained further in Section~\ref{sec:prelim}, we use $T_b$ to denote a Weyl para-product with symbol $b$. In the following, we assume $\Theta_+$ and $\Theta_-$ are two nonzero numbers fixed beforehand,
and we denote by $H^s$ and $W^{k,p}$ the standard Sobolev spaces of functions with $s$ weak-$L^2$ and $k$ weak-$L^p$ derivatives, respectively.

Our results for $0<\alpha \le 1$ are restricted to small data.

\begin{theorem}[$\alpha\in(0,1)$]
\label{existence01}
Let $s \ge 5$ be an integer, and suppose that $\vp_0,\psi_0 \in H^s(\R)$ satisfy: (i)
\begin{align*}
& \left\|\vartheta - T_{B^{1 - \alpha}[\vp_0]}\right\|_{L^2 \to L^2} \ge m_0,\qquad \left\|\vartheta - T_{B^{1 - \alpha}[ \psi_0]}\right\|_{L^2 \to L^2} \ge m_0,\\
& \left\|T_{\beta[\vp_0]}\right\|_{L^2 \to L^2} \geq m_0', \qquad \left\|T_{\beta[\psi_0]}\right\|_{L^2 \to L^2} \geq m_0',
\end{align*}
for some constants $m_0, m_0' > 0$, where the constant $\vartheta$ is defined in \eqref{def_vartheta}, the symbol $\beta[f]$ is defined in \eqref{betadef01}, and the symbol $B^{1 - \alpha}[f]$ is defined in \eqref{defB01}; (ii)
\begin{align*}
\begin{split}
\sum_{n = 0}^\infty \sum_{\ell = 0}^n \sum_{m = 0}^{2n - \ell} \tilde{C}^n \left(1 + h^{\ell - 2n + \alpha - \frac{5}{2}}\right) \|\vp_0\|_{W^{4, \infty}}^{2n - \ell - m} \|\psi_0\|_{W^{4, \infty}}^m < \infty,
\end{split}
\end{align*}
where $h$ is defined in \eqref{defh}, and  $\tilde C > 1$ is the constant depending only on $s$ and $h$ in Proposition~\ref{apriori01}.
Then there exists $\Time > 0$, depending only on $\|\vp_0\|_{H^s}$, $\|\psi_0\|_{H^s}$,  $m_0$, $m_0'$, and $\tilde{C}$, such that the initial value problem for \eqref{gsqgsys12} with $0 < \alpha < 1$, $\vp(x,0) = \vp(x)$, $\psi(x,0) = \psi_0(x)$ has a unique solution with $\vp, \psi \in C([0, \Time); H^s(\R))$.
\end{theorem}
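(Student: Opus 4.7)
The plan is to follow the standard scheme for quasilinear equations with derivative loss: paralinearize, symmetrize, regularize, estimate, and pass to the limit. Because $0<\alpha<1$ lies in the derivative-loss regime recorded in Table~\ref{tab:alpha}, a naive $H^s$ energy estimate on the full nonlinear system \eqref{gsqgsys12} cannot close; instead, one first identifies the principal (order-$\alpha$) part of the Biot--Savart operator, writes it as a Weyl para-product with symbol $\vartheta - B^{1-\alpha}[\vp]$ (resp.\ for $\psi$), and then conjugates by a second Weyl para-product $T_{\beta[\vp]}$, $T_{\beta[\psi]}$ chosen so that the anti-self-adjoint part of the principal operator is eliminated modulo lower-order terms. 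The two hypotheses in (i) are exactly the coercivity bounds ensuring that this symmetrization is nondegenerate, so that the associated energy is equivalent to $\|\vp\|_{H^s}^2+\|\psi\|_{H^s}^2$.

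First I would construct approximate solutions $(\vp^\varepsilon,\psi^\varepsilon)$ by a Friedrichs-type regularization: multiply the Biot--Savart kernel and the nonlinearity by cut-offs in frequency $|\xi|\le 1/\varepsilon$. For fixed $\varepsilon>0$ the regularized system is an ODE on $H^s(\R)\times H^s(\R)$, so local existence and uniqueness follow by Picard iteration as long as the chord-arc condition \eqref{ptwise} holds. Next I would apply Proposition~\ref{apriori01} to the regularized system to obtain a differential inequality
\[
\frac{d}{dt}E_s(\vp^\varepsilon,\psi^\varepsilon) \le F\bigl(E_s(\vp^\varepsilon,\psi^\varepsilon);\,m_0,\,m_0',\,\tilde C\bigr),
\]
uniformly in $\varepsilon$, where $E_s$ is the symmetrized energy. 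The convergent series in hypothesis~(ii) is what makes $F$ finite at $t=0$: it encodes a uniform bound on the formal expansion of the two-front Biot--Savart kernel in powers of $\vp/h$ and $\psi/h$, and it persists for a short time because $H^s\hookrightarrow W^{4,\infty}$ when $s\ge 5$. A continuity argument then produces a time $\Time>0$, depending only on the stated data, on which the coercivity constants in~(i) remain above $m_0/2$ and $m_0'/2$, the chord-arc condition holds, and $E_s$ stays bounded.

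With these uniform bounds, I would pass to the limit $\varepsilon\to 0$ by Aubin--Lions: a subsequence converges strongly in $C([0,\Time];H^{s-1}(\R))$ and weakly-$*$ in $L^\infty([0,\Time];H^s(\R))$ to a limit $(\vp,\psi)$ solving~\eqref{gsqgsys12}. Continuity in time into $H^s$ is upgraded via a Bona--Smith argument, comparing with more regular initial data. Uniqueness follows from a low-regularity estimate for the difference of two solutions in $L^2$ (or $H^{s-1}$), which does not require top-order symmetrization and reduces to Lipschitz bounds on the map $(\vp,\psi)\mapsto(\u\cdot\nabla\theta)|_{\mathrm{fronts}}$.

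The main obstacle is step two: establishing an a priori estimate that is genuinely uniform in the regularization parameter. This demands careful paradifferential symbol calculus to show that $T_{\beta[\vp]}$ really does symmetrize $T_{\vartheta - B^{1-\alpha}[\vp]}$ modulo bounded remainders, and that the commutators between these operators and $\partial_x^s$ contribute only lower-order terms that can be absorbed using the coercivity from (i). The smallness series in (ii) is needed precisely to control the infinite sequence of multilinear corrections produced when the nonlocal kernel is expanded in $\vp$ and $\psi$, since in the regime $0<\alpha<1$ no cancellation structure is available to tame these corrections term by term.
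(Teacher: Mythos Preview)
Your proposal is correct and follows essentially the same approach as the paper: both reduce Theorem~\ref{existence01} to the a priori estimate of Proposition~\ref{apriori01} combined with standard local-existence machinery for quasilinear evolution equations. The paper simply invokes Kato's abstract theory \cite{Kat75} in one line, whereas you spell out an equivalent direct construction (Friedrichs regularization, Aubin--Lions compactness, Bona--Smith for strong continuity), but the substantive content---paralinearization, the weighted energy built from $T_{\beta[\vp]}$ and powers of $(\vartheta - T_{B^{1-\alpha}[\vp]})$, and the role of hypotheses (i)--(ii)---is the same.
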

\begin{theorem}[$\alpha=1$]
\label{existence1}
Let $s \ge 5$ be an integer, and suppose that $\vp_0,\psi_0 \in H^s(\R)$ satisfy: (i)
\[
\|T_{B^{\log}[\vp_0]}\|_{L^2 \to L^2} \leq C,\quad \|T_{B^{\log}[\psi_0]}\|_{L^2 \to L^2} \leq C
\]
for some constant $0 < C < 2$, where  the symbol $B^{\log}[f]$ is defined in \eqref{defBsqg}; (ii)
\begin{align*}
\sum_{n = 1}^\infty \tilde{C}^n |c_n| \Big(\|\varphi_0\|_{W^{4, \infty}}^{2n} + \|L \varphi_0\|_{W^{4, \infty}}^{2n}\Big) < \infty,\qquad
\sum_{n = 1}^\infty \tilde{C}^n |c_n| \Big(\|\psi_0\|_{W^{4, \infty}}^{2n} + \|L \psi_0\|_{W^{4, \infty}}^{2n}\Big) < \infty,
\end{align*}
where $L=\log|\partial_x|$ is the Fourier multiplier with symbol $\log|\xi|$, $c_n$ is given by \eqref{alpha-taylor}, and  $\tilde C > 1$ is the constant depending only on $s$ and $h$ in Proposition~\ref{apriori}.
Then there exists $T > 0$, depending only on $\|\vp_0\|_{H^s}$, $\|\psi_0\|_{H^s}$,  $C$, and $\tilde{C}$, such that the initial value problem  for \eqref{sqgsys} with $\vp(x,0) = \vp(x)$, $\psi(x,0) = \psi_0(x)$ has a unique solution with $\vp, \psi \in C([0, \Time); H^s(\R))$.
\end{theorem}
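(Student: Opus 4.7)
The plan is to convert the a priori estimate of Proposition~\ref{apriori} into local existence and uniqueness for the quasilinear SQG front system \eqref{sqgsys} by a regularization-and-compactness argument, with a separate low-regularity energy estimate handling uniqueness. The central difficulty is the loss of one derivative in the nonlinearity (Table~\ref{tab:alpha}), which rules out naive Picard iteration in $H^s$; the smallness condition $\|T_{B^{\log}[\cdot]}\|_{L^2\to L^2}\le C<2$ plays the role of strict hyperbolicity and is what drives the top-order energy estimate. I begin by mollifying: for $\ve>0$ let $J_\ve$ be a Friedrichs mollifier and consider
\[
\partial_t \vp^\ve = J_\ve \Nc_+(J_\ve \vp^\ve, J_\ve \psi^\ve),\qquad \partial_t \psi^\ve = J_\ve \Nc_-(J_\ve \vp^\ve, J_\ve \psi^\ve),
\]
with initial data $(\vp_0,\psi_0)$, where $\Nc_\pm$ are the right-hand sides of \eqref{sqgsys}. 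Because $J_\ve$ smooths away the derivative loss, the right-hand side is locally Lipschitz on $H^s(\R)\times H^s(\R)$, and Picard-Lindelöf on a Banach space produces a unique maximal solution $(\vp^\ve,\psi^\ve)\in C^1([0,T_\ve);H^s)$.

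Since $J_\ve$ is self-adjoint, commutes with Fourier multipliers, and is bounded on $L^p$ uniformly in $\ve$, each step of Proposition~\ref{apriori} applies to the mollified system with constants independent of $\ve$. The hypothesis $\|T_{B^{\log}[\vp_0]}\|_{L^2\to L^2},\|T_{B^{\log}[\psi_0]}\|_{L^2\to L^2}\le C<2$ persists on a short interval $[0,T]$ by continuity of paraproduct norms in their symbols, and the Sobolev embedding $H^{s-1}\hookrightarrow W^{4,\infty}$ (valid since $s\ge 5$) together with the assumed summability of $c_n\|\vp_0\|_{W^{4,\infty}}^{2n}+c_n\|L\vp_0\|_{W^{4,\infty}}^{2n}$ (and similarly for $\psi_0$) likewise persists on $[0,T]$. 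These persistence facts yield a uniform estimate
\[
\sup_{t\in[0,T]}\bigl(\|\vp^\ve(t)\|_{H^s}+\|\psi^\ve(t)\|_{H^s}\bigr)\le M,
\]
with $T,M$ depending only on $\|\vp_0\|_{H^s}$, $\|\psi_0\|_{H^s}$, $C$, and $\tilde C$; in particular $T_\ve>T$.

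From the equation, $\partial_t(\vp^\ve,\psi^\ve)$ is uniformly bounded in $H^{s-2}$, so the Aubin-Lions lemma extracts a subsequence converging in $C([0,T];H^{s-1}_{\mathrm{loc}})$ to a pair $(\vp,\psi)\in L^\infty([0,T];H^s)$ that solves \eqref{sqgsys}; time continuity into $H^s$ is recovered by the Bona-Smith argument, approximating the initial data in $H^{s+1}$ and using a weaker uniqueness estimate. For uniqueness, if $(\vp_i,\psi_i)$, $i=1,2$, are two $H^s$ solutions with the same initial data, the differences $\tilde\vp=\vp_1-\vp_2$, $\tilde\psi=\psi_1-\psi_2$ satisfy an equation whose principal part again factors through the coercive operator $2\,\id-T_{B^{\log}[\cdot]}$. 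Commutator estimates controlled by the $H^s$ norms of the two solutions then produce an $L^2$ differential inequality
\[
\frac{d}{dt}\bigl(\|\tilde\vp\|_{L^2}^2+\|\tilde\psi\|_{L^2}^2\bigr)\le K\bigl(\|\tilde\vp\|_{L^2}^2+\|\tilde\psi\|_{L^2}^2\bigr),
\]
and Grönwall closes uniqueness and, in turn, promotes subsequential convergence to full convergence of $(\vp^\ve,\psi^\ve)$.

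The hard part is to ensure that the paradifferential coercivity driving Proposition~\ref{apriori} is faithfully preserved by the mollified system uniformly in $\ve$. The high-frequency energy argument relies on fine cancellations between the transport term and the paraproduct structure of $\Nc_\pm$, and on smooth dependence of the symbol $B^{\log}[\vp]$ on $\vp$. The mollification must be arranged so that the commutators $[J_\ve,T_{B^{\log}[\cdot]}]$ and $[J_\ve,L]$ contribute only lower-order errors that vanish as $\ve\to 0$; verifying this, together with the propagation of the series hypothesis on the Taylor coefficients $c_n$ with explicit constants uniform in $\ve$, is where the bulk of the technical work lies.
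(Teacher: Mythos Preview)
Your proposal is correct and follows the same approach as the paper: the paper simply states that Theorems~\ref{existence01}--\ref{existence12} follow from the a priori estimates (Proposition~\ref{apriori} in this case) together with ``standard local existence theory for quasilinear equations'' (citing Kato~\cite{Kat75}), and then devotes itself entirely to proving the a priori estimate. Your sketch of the Friedrichs mollification, uniform energy bounds, Aubin--Lions compactness, Bona--Smith continuity, and $L^2$ uniqueness argument is precisely one implementation of that standard theory; the only minor slip is that the embedding you invoke should be $H^s\hookrightarrow W^{4,\infty}$ rather than $H^{s-1}\hookrightarrow W^{4,\infty}$ (the latter fails at the endpoint $s=5$), but this does not affect the argument.
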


\begin{remark}
The smallness conditions in Theorems~\ref{existence01}--\ref{existence1} arise from the fact that the nonlinear terms in the front equations lose derivatives, and we use a multilinear expansion of the nonlinearity to extract the terms responsible for the loss of derivatives. This expansion can only be done when the solutions are sufficiently small and requires Condition (ii). We then use the linear terms to control these nonlinear terms in a weighted energy space, but our weight may degenerate if Condition (i) fails.

Condition (ii) also implies that the initial data satisfies the non-intersection condition \eqref{ptwise}, since it  guarantees that $|\psi_0(x) - \vp_0(x)| < 2 h$ for all $x \in \R$.
\end{remark}

The case $1< \alpha \le 2$ is simpler than $0< \alpha \le 1$, since the nonlinear terms do not lose derivatives, and we have the following large data result.

\begin{theorem}[\mbox{$\alpha\in(1,2]$}]
\label{existence12}
Let $s \ge 3$ be an integer, and suppose that $\vp_0,\psi_0 \in H^s(\R)$  satisfy the non-intersection condition \eqref{ptwise}.
Then there exists $\Time>0$, depending only on $\|\vp_0\|_{H^s}$, $\|\psi_0\|_{H^s}$, and $\|2 h - \psi_0 + \vp_0\|_{L^\infty(\R)}$, such that the initial value problem for the system \eqref{gsqgsys12} with $1 < \alpha \leq 2$, $\vp(x,0) = \vp(x)$, $\psi(x,0) = \psi_0(x)$ has a unique solution with $\vp, \psi \in C([0, \Time); H^s(\R))$.
\end{theorem}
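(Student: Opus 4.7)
The plan is to follow the standard paradifferential energy method for quasi-linear equations. The key structural point, summarized in Table~\ref{tab:alpha}, is that when $\alpha \in (1,2]$ the principal part of \eqref{gsqgsys12} has order $\alpha - 1 \in (0,1]$ and acts in a transport-like manner, while the nonlinear terms do not lose derivatives. Consequently an $H^s$ energy estimate will close without any smallness condition, as long as the non-intersection condition \eqref{ptwise} provides uniform pointwise control on $2h - \psi + \vp$ from below.

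I would first derive the a priori estimate. Para-linearizing the right-hand sides of \eqref{gsqgsys12}, the system should take the schematic form
\[
\pt \vp + T_{V_+[\vp,\psi]}\,\partial_x \vp = R_+[\vp,\psi], \qquad \pt \psi + T_{V_-[\vp,\psi]}\,\partial_x \psi = R_-[\vp,\psi],
\]
where $V_\pm$ are real symbols of order $\alpha - 2$ (morally the tangential velocities on the fronts, possibly corrected by the far-field shear), so that $T_{V_\pm}\partial_x$ is of order $\alpha - 1$ and essentially self-adjoint modulo symbols of lower order, and where the remainders $R_\pm$ are tame in $H^s$ with constants depending on $\|(2h - \psi + \vp)^{-1}\|_{L^\infty}$ through the smoothness of the cross-interaction kernel. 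Applying $\partial_x^s$, commuting with the paraproducts via the standard paradifferential commutator estimates, and pairing with $\partial_x^s \vp$ and $\partial_x^s \psi$ in $L^2$, the top-order contributions are skew-adjoint up to order $\alpha - 2$ and yield
\[
\frac{d}{dt}\bigl(\|\vp\|_{H^s}^2 + \|\psi\|_{H^s}^2\bigr) \le P\Bigl(\|\vp\|_{H^s},\,\|\psi\|_{H^s},\,\bigl\|(2h - \psi + \vp)^{-1}\bigr\|_{L^\infty}\Bigr),
\]
for some continuous $P$. Since $H^s \hookrightarrow C^1$ for $s \ge 3$, a continuity argument based on the initial gap $2h - \psi_0 + \vp_0 > 0$ produces $T > 0$ depending only on the quantities listed in the theorem on which both the $H^s$ norms and the pointwise gap remain controlled.

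Existence would then be obtained by Friedrichs mollification: one solves the regularized ODE
\[
\pt \vp^\ve = J_\ve F_+(J_\ve \vp^\ve, J_\ve \psi^\ve), \qquad \pt \psi^\ve = J_\ve F_-(J_\ve \vp^\ve, J_\ve \psi^\ve),
\]
in $H^s(\R)$, where $J_\ve$ is a standard mollifier and $F_\pm$ denote the right-hand sides of \eqref{gsqgsys12}. Cauchy--Lipschitz in $H^s$ gives smooth-in-time approximate solutions, and the a priori estimate is stable under mollification, so the $(\vp^\ve,\psi^\ve)$ exist on a common interval $[0,T]$ with uniform $H^s$ bounds and a uniform lower bound on $2h - \psi^\ve + \vp^\ve$. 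Aubin--Lions compactness upgrades a subsequence to a strong limit in $C([0,T];H^{s-1})$ and a weak-$*$ limit in $L^\infty([0,T];H^s)$, allowing passage to the limit in the nonlinearities. Uniqueness is obtained by subtracting two solutions with the same data and running the same energy argument at one derivative lower, where the nonlinear terms are Lipschitz in the difference with constants controlled by the $H^s$ norms and the non-intersection gap; Gronwall forces the difference to vanish, and continuous dependence follows by a standard variant.

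The principal obstacle is the paradifferential analysis underlying the a priori estimate. The two fronts interact through non-local kernels whose self-interactions generate the order-$(\alpha-1)$ principal symbol and whose cross-interactions, although smoother, require tame estimates with constants blowing up as $2h - \psi + \vp \to 0$. Verifying that after para-linearization the principal symbol is real, that its symmetrization error lies at order $\alpha - 2$, and that all commutators with $\partial_x^s$ are controlled by $\|\vp\|_{H^s}+\|\psi\|_{H^s}$ and $\|(2h-\psi+\vp)^{-1}\|_{L^\infty}$, is precisely what makes the estimate close for $\alpha > 1$ without any smallness assumption; this structure breaks down at $\alpha = 1$, which is why Theorems~\ref{existence01}--\ref{existence1} require smallness and multilinear expansion while Theorem~\ref{existence12} does not.
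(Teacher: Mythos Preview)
Your overall strategy is sound and would lead to a correct proof, but it takes a genuinely different route from the paper. The paper does \emph{not} paralinearize in the $1<\alpha\le 2$ case; instead it works directly with the non-conservative integral form \eqref{reg-GSQG-nc}, applies $\partial_x^k$, multiplies by $\partial_x^k\vp$ (respectively $\partial_x^k\psi$), and estimates each resulting double integral by hand. The key manoeuvres are: (i) splitting the $\zeta$-integral into $|\zeta|<1$ and $|\zeta|>1$, (ii) for the top-order self-interaction term (the $i=k$ summand after Leibniz) integrating by parts in $\zeta$ and in $x$ to move a derivative onto the kernel $G(\sqrt{\zeta^2+(\vp(x+\zeta)-\vp(x))^2})-G(\zeta)$, and (iii) for the cross-interaction terms using the non-intersection gap to ensure the kernel is non-singular near $\zeta=0$, with constants blowing up like a negative power of $\inf(2h-\psi+\vp)$. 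This yields $\frac{d}{dt}\tilde E^{(s)}\le P(\tilde E^{(s)})$ for a polynomial $P$, and the rest is the same standard local-existence machinery you describe.

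The trade-offs are clear. Your paraproduct route packages the top-order cancellation as skew-adjointness of $T_{V_\pm}\partial_x$ and relies on the black-box commutator lemmas (Lemma~\ref{weyl-comm}, Lemma~\ref{lem-DsT}); this is cleaner conceptually but requires you to actually identify the symbols $V_\pm$ and verify their regularity and reality, which for the cross-interaction kernel is not entirely trivial. The paper's direct approach avoids all symbol calculus and is more elementary, at the cost of longer explicit computations on the integral kernels; it also makes transparent exactly where the condition $\alpha>1$ enters (local integrability of $|\zeta|^{\alpha-2}$ near $\zeta=0$). One small correction: the principal linear part has order $2-\alpha$, not $\alpha-1$; both lie in $[0,1)$ for $\alpha\in(1,2]$, so your conclusion that the equation behaves like transport with no derivative loss is unaffected.
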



Theorems \ref{existence01}--\ref{existence12} follows from \emph{a priori} estimates and standard local existence theory for quasilinear equations (see e.g. \cite{Kat75}). Therefore, in the following we only derive the \emph{a priori} estimates for the corresponding Cauchy problems. The same results also hold backwards in time.

\subsection{Outline of the paper}
In Section~\ref{sec:prelim}, we provide the definitions and  some properties of fractional Laplacians, the Weyl paradifferential calculus, and modified Bessel functions of the second kind. In Section~\ref{sec:reg}, we derive the two-front equations. In Section~\ref{sec:lin_stab}, we analyze the linearized stability of the unperturbed, flat two-front solutions, which is a particular example of a GSQG shear flow, and in Sections~\ref{sec:lwp01}--\ref{sec:lwp12}, we prove the \emph{a priori} estimates for the front equations. Section~\ref{sec:lwp01} treats the GSQG equation with $0<\alpha<1$, Section~\ref{sec:lwp02} treats the SQG equation, and Section~\ref{sec:lwp12} treats the GSQG equation with $1<\alpha\le 2$.

\subsection{Acknowledgement}
JS would like to thank Javier G\'{o}mez-Serrano for discussions in the ``MathFluids'' Workshop held in Mathematical Institute of University of Seville, Seville, Spain, June 12--15, 2018.

\section{Preliminaries}\label{sec:prelim}

\subsection{Fractional Laplacians}

We interpret the fractional Laplacian $(-\Delta)^{\alpha/2}$ in \eqref{gsqg} in a distributional sense, and we summarize its definition here.

Let $0<\alpha < 2$. We denote by $L^1_\alpha(\R^n)$ the space of measurable functions $f: \R^n \to \R$ such that
\[
\int_{\R^n} \frac{|f(\x)|}{1 + |\x|^{n+\alpha}}\, \diff{\x} < \infty.
\]
Then $(-\Delta)^{\alpha/2} : L^1_\alpha(\R^n) \to \mathcal{D}'(\R^n)$ can be defined by \cite{BB99}
\[
\left\langle (-\Delta)^{\alpha/2} f ,\phi\right\rangle = \int_{\R^n} f(\x)\cdot (-\Delta)^{\alpha/2}\phi(\x)\, \diff{\x}
\qquad \text{for all $\phi\in C_c^\infty(\R^n)$},
\]
where $(-\Delta)^{\alpha/2}$ acts on test functions $\phi$
as, for example, a Fourier multiplier or a singular integral \cite{KW17}.
For every compact set $K \subset \R^n$, there exists a constant $C(K,\alpha)$ such that
\[
\sup_{\x\in \R^n}\left|\left(1 + |\x|^{n+\alpha}\right) (-\Delta)^{\alpha/2}\phi(\x)\right|\le C(K,\alpha) \|\phi\|_{C^2(\R^n)}
\qquad \text{for all $\phi\in C_c^\infty(\R^n)$ with $\supp \phi \subset K$},
\]
so $(-\Delta)^{\alpha/2} f$ is a distribution of order at most $2$ for $f\in L^1_\alpha(\R^n)$.

As can be seen for the shear-flow solutions \eqref{shear_flow}, the front velocity-fields $\u$ belong to $L^1_\alpha(\R^2)$ for $0<\alpha < 2$, so $(-\Delta)^{\alpha/2}\u$ in \eqref{gsqg} is well-defined as a distribution. Moreover, the only $\alpha$-harmonic solutions
$f\in L^1_\alpha(\R^n)$ of $(-\Delta)^{\alpha/2}f = 0$ are constant functions for $0<\alpha \le 1$ or affine functions for $1<\alpha <2$ \cite{CDL15,Fall16}. Thus,
if we require that $\u$ has sublinear growth in $\x$, then $\u$ is determined from $\theta$ up to a spatially uniform constant (which may depend upon $t$), and velocity fields that differ by $\mathbf{C}(t)$ give equivalent dynamics by transforming into a reference frame
moving with velocity $\mathbf{C}(t)$.

\subsection{Para-differential calculus}

In this section, we recall the definition of Weyl para-products and state two lemmas.
Further discussion of the Weyl calculus and para-products can be found in \cite{BCD11, Hor85, Tay00}.

We denote the Fourier transform of $f \colon \R\to \C$ by $\hat f \colon \R\to \C$, where $\hat f= \F f$ is given by
\[
f(x)=\int_{\R} \hat f(\xi) e^{i\xi x} \diff\xi,  \qquad \hat f(\xi)=\frac1{2\pi} \int_{\R}f(x) e^{-i\xi x}\diff{x}.
\]

For $s\in \R$, we denote by $H^s(\R)$ the space of Schwartz distributions $f$ with $\|f\|_{H^s} < \infty$, where
\[
\|f\|_{H^s} = \left[\int_\R (1+\xi^2)^s |\hat{f}(\xi)|^2\, d\xi\right]^{1/2}.
\]

Throughout this paper, we use $A\lesssim B$ to mean there is a constant $C$ such that $A\leq C B$, and $A\gtrsim B$ to mean there is a constant $C$ such that $A\geq C B$. We use $A\approx B$ to mean that $A\lesssim B$ and $B\lesssim A$.
The notation $\O(f)$ denotes a term satisfying $\|\O(f)\|_{{H}^s}\lesssim \|f\|_{{H}^s}$
whenever there exists  $s\in \R$ such that $f\in {H}^s$, and $O(f)$ denote a term satisfying $|O(f)|\lesssim|f|$ pointwise.

Let $\chi \colon \R \to \R$ be a smooth function supported in the interval $\{\xi\in \R \mid |\xi|\leq 1/10\}$
and equal to $1$  on $\{\xi\in \R \mid |\xi|\leq 3/40\}$.
If
$a \colon \R \times \R \to \C$ is a symbol, then
we define the Weyl para-product operator $T_a$ by
\begin{equation}
\label{weyldef}
\F \left[T_a f\right](\xi)=\int_{\R} \chi\left(\frac{|\xi-\eta|}{|\xi+\eta|}\right) \tilde{a}\Big(\xi-\eta, \frac{\xi + \eta}{2}\Big)\hat f(\eta)\diff\eta,
\end{equation}
where
\begin{align}
\label{atilde}
\tilde{a}(\xi,\eta) = \frac{1}{2 \pi} \int_\R a(x, \eta) e^{- i \xi x} \diff{x}
\end{align}
denotes the partial Fourier transform of $a(x, \eta) $ with respect to $x$. For $r_1, r_2 \in \N_0$, we define a normed symbol space by
\begin{align*}
\M_{(r_1, r_2)} &= \{a \colon \R \times \R \to \C : \|a\|_{\M_{(r_1, r_2)}} < \infty\},
\\
\|a\|_{\M_{(r_1, r_2)}} &= \sup_{(x, \eta) \in \R^2} \left\{\sum_{\alpha=0}^ {r_1} \sum_{\beta=0}^{r_2} (1 + |\eta|^\beta)  \big|\partial_\eta^\beta \px^\alpha a(x, \eta)\big|\right\}.
\end{align*}
If $a \in \M_{(1, 1)}$ and $f \in {L}^2$, then $T_a f \in L^2$ and \cite{Bou99}
\[
\|T_a f\|_{L^2} \lesssim \|a\|_{\M_{(1, 1)}} \|f\|_{L^2}.
\]
In particular, if $a\in\M_{(1, 1)}$ is real-valued, then $T_a$ is a self-adjoint, bounded linear operator on $L^2$.

Next, we state a lemma on composition for Weyl para-products proved in \cite{HSZ18p} (see also \cite{CGI19, DIPP17}).
\begin{lemma}
\label{weyl-comm}
If $a, b \in \M_{(3, 5)}$ and $f \in H^s(\R)$, then
\[
T_a T_b f = T_{a b} f + \frac{1}{2 i} T_{\{a, b\}} f + \Rf',
\]
where $\{a, b\} = \partial_\eta a\cdot \px b - \partial_\eta b\cdot \px a$ is the Poisson bracket of $a$ and $b$, and
the remainder term $\Rf'$ satisfies the estimate
\begin{align}
\label{comm-remainer}
\|\Rf'\|_{H^{s + 2}} \lesssim \|a\|_{\M_{(3, 5)}} \|b\|_{\M_{(3, 5)}} \|f\|_{H^s}.
\end{align}
As a consequence,
\[
[T_a, T_b] f = - i T_{\{a, b\}} f + \Rf,
\]
where $\Rf$ also satisfies \eqref{comm-remainer}.
\end{lemma}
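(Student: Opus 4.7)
The plan is to work entirely on the Fourier side and reduce the composition $T_a T_b$ to a single para-product plus a controlled remainder. Applying the definition \eqref{weyldef} twice yields
\[
\F[T_a T_b f](\xi) = \iint \chi_1 \chi_2 \, \tilde{a}\Big(\xi-\eta, \tfrac{\xi+\eta}{2}\Big) \, \tilde{b}\Big(\eta-\zeta, \tfrac{\eta+\zeta}{2}\Big) \, \hat f(\zeta)\, \diff\eta\, \diff\zeta,
\]
where $\chi_1 = \chi(|\xi-\eta|/|\xi+\eta|)$ and $\chi_2 = \chi(|\eta-\zeta|/|\eta+\zeta|)$. The goal is to freeze the frequency arguments at the symmetric midpoint $\mu = (\xi+\zeta)/2$ and Taylor expand in the deviation $\nu = \eta - \mu$. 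On the support of $\chi_1\chi_2$ the cutoffs force $|\xi-\eta|$ and $|\eta-\zeta|$ to be small relative to $|\xi+\eta|$ and $|\eta+\zeta|$, so $\eta$, $\mu$, and the two midpoint frequencies are all comparable and $\nu$ is small by comparison, which legitimizes the expansion.

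Since $(\xi+\eta)/2 = \mu + (\xi-\zeta)/4 + \nu/2$ and $(\eta+\zeta)/2 = \mu - (\xi-\zeta)/4 + \nu/2$, I would Taylor expand $\tilde a$ and $\tilde b$ in their frequency slot about $\mu$ to second order. After the $\eta$-integration the zeroth-order term reproduces $T_{ab} f$, since the partial Fourier transform of the product $ab$ at the frozen frequency $\mu$ is the convolution $\int \tilde a(\sigma - \tau, \mu)\tilde b(\tau, \mu)\, \diff\tau$. The first-order terms in $\nu$ combine, after symmetrizing the two cross contributions $\partial_\eta \tilde a \cdot \tilde b$ and $\tilde a \cdot \partial_\eta \tilde b$ and converting the factor $\nu$ into an $x$-derivative on the synthesis side, to yield the Poisson bracket contribution $\frac{1}{2i} T_{\{a,b\}} f$. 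The factor $1/2$ is the Weyl symmetrization responsible for making this subleading correction depend only on the antisymmetric combination of derivatives, and the $i^{-1}$ comes from the standard identity $\widehat{\partial_x g}(\xi) = i\xi\, \hat g(\xi)$.

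The remainder $\Rf'$ collects the second-order Taylor terms in $\nu$, which involve $\partial_\eta^2\tilde a$ and $\partial_\eta^2\tilde b$, together with the discrepancy between the product cutoff $\chi_1\chi_2$ and the single cutoff $\chi(|\xi-\zeta|/|\xi+\zeta|)$ that appears in $T_{ab}$ and $T_{\{a,b\}}$. Each factor of $\nu$ converts into a spatial derivative via Fourier inversion, so the two-$\nu$ remainder delivers exactly the gain of two derivatives advertised in \eqref{comm-remainer}; once rewritten as an integral operator, its boundedness from $H^s$ to $H^{s+2}$ follows from a Schur-type $L^2$ estimate for its kernel. The main technical obstacle is the cutoff mismatch: on the overlap region $\chi_1\chi_2$ agrees with the single midpoint cutoff up to lower-order terms, but the off-overlap region must be absorbed into the remainder by repeated integration by parts in $x$. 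This is why the hypothesis $a,b\in\M_{(3,5)}$ appears: three $x$-derivatives suffice to control the spatial decay needed for the cutoff swap, while five $\eta$-derivatives cover the second-order Taylor remainder plus the derivatives that fall on the cutoffs themselves. Once the composition formula is in hand, the commutator form follows immediately by subtracting the analogous expansion of $T_b T_a$: the $T_{ab}$ term cancels and $\frac{1}{2i} T_{\{a,b\}}$ doubles into $-i T_{\{a,b\}}$ because $\{b,a\} = -\{a,b\}$.
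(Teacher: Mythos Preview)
The paper does not prove this lemma; it is quoted from \cite{HSZ18p} (with further references to \cite{CGI19, DIPP17}), so there is no in-paper argument to compare against. Your sketch is the standard Weyl-calculus route and matches what one finds in those references: compose on the Fourier side, Taylor-expand the frequency arguments about the midpoint $\mu=(\xi+\zeta)/2$, read off $T_{ab}$ and $\tfrac{1}{2i}T_{\{a,b\}}$ from the zeroth and first order terms, and absorb the second-order remainder together with the cutoff mismatch into $\Rf'$.

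One point deserves sharpening. The two-derivative gain in \eqref{comm-remainer} is not produced by ``each factor of $\nu$ converting into a spatial derivative'' on $f$; the $\nu$ (and $\xi-\zeta$) factors correspond to $\partial_x$ acting on the \emph{symbols} $a,b$, which is why three $x$-derivatives are demanded in $\M_{(3,5)}$. The smoothing on $f$ comes instead from the $\partial_\eta^2$ that the second-order Taylor remainder places on the symbols: the $\M_{(3,5)}$ norm forces $|\partial_\eta^2 a(x,\eta)|\lesssim (1+|\eta|)^{-2}$, and since all frequencies are comparable to $|\zeta|$ on the support of $\chi_1\chi_2$, this yields a factor $(1+|\zeta|)^{-2}$ in the kernel and hence the $H^s\to H^{s+2}$ bound. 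With that correction your outline is sound.
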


Finally, we state an expansion for the action the Fourier multiplier $|D|^s$
with symbol $|\xi|^s$ on para-products, whose proof can be found in \cite{HSZ18p}.

\begin{lemma}\label{lem-DsT}
If $a \in \M_{(3, 1)}$ and $f\in H^s(\R)$, then
\[
|D|^sT_a f=T_a |D|^sf+sT_{Da}|D|^{s-2}Df+\O(T_{|D|^2a}|D|^{s-2}f),
\]
where $Da$ means that the differential operator $D$ acts on the function $x\mapsto a(x, \xi)$ for fixed $\xi$, and similarly
for  $|D|^2a$.
\end{lemma}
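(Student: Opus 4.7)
The plan is to work on the Fourier side and expand the symbol $|\xi|^{s}$ by Taylor's formula around $\eta$ to second order, then match the successive terms of the expansion against the three operators in the statement. Using the definition \eqref{weyldef} and the Fourier identity $\widehat{|D|^{s}g}(\xi)=|\xi|^{s}\hat{g}(\xi)$, the difference $|D|^{s}T_{a}f-T_{a}|D|^{s}f$ has Fourier transform
\[
\int_{\R}\chi\!\left(\tfrac{|\xi-\eta|}{|\xi+\eta|}\right)\bigl(|\xi|^{s}-|\eta|^{s}\bigr)\tilde{a}\!\left(\xi-\eta,\tfrac{\xi+\eta}{2}\right)\hat{f}(\eta)\,\diff\eta.
\]
On the support of $\chi$ one has $|\xi-\eta|\le\tfrac{1}{10}|\xi+\eta|$, so $\xi$ and $\eta$ are of the same sign and comparable in size, and the map $t\mapsto|\eta+t(\xi-\eta)|^{s}$ is smooth on $[0,1]$. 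Taylor's formula with integral remainder then gives
\[
|\xi|^{s}-|\eta|^{s}=s\eta|\eta|^{s-2}(\xi-\eta)+s(s-1)(\xi-\eta)^{2}\int_{0}^{1}(1-t)|\eta+t(\xi-\eta)|^{s-2}\,\diff t.
\]

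Next I would identify each term. Using the identity $\widetilde{Da}(\zeta,\mu)=\zeta\tilde{a}(\zeta,\mu)$ (for $D$ with Fourier symbol $\xi$), the first-order contribution $s\eta|\eta|^{s-2}(\xi-\eta)\tilde{a}\hat{f}$ is exactly the Fourier representation of $sT_{Da}|D|^{s-2}Df$, which accounts for the principal correction in the statement. In the integral remainder I would split $|\eta+t(\xi-\eta)|^{s-2}=|\eta|^{s-2}+\bigl(|\eta+t(\xi-\eta)|^{s-2}-|\eta|^{s-2}\bigr)$; the contribution of the first piece integrates to $\tfrac{s(s-1)}{2}(\xi-\eta)^{2}|\eta|^{s-2}\tilde{a}\hat{f}$, which by $\widetilde{|D|^{2}a}(\zeta,\mu)=\zeta^{2}\tilde{a}(\zeta,\mu)$ is the Fourier representation of $\tfrac{s(s-1)}{2}T_{|D|^{2}a}|D|^{s-2}f$ and is therefore absorbed into the $\O(T_{|D|^{2}a}|D|^{s-2}f)$ term.

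The main obstacle, and the bulk of the work, is to bound the true remainder with symbol $(\xi-\eta)^{2}\int_{0}^{1}(1-t)\bigl(|\eta+t(\xi-\eta)|^{s-2}-|\eta|^{s-2}\bigr)\,\diff t$. One further Taylor expansion in $t$ inside the support of $\chi$ shows that this symbol, together with its low-order derivatives in $\xi$ and $\eta$, is pointwise bounded by a constant times $|\xi-\eta|^{3}|\eta|^{s-3}$. This is a Coifman--Meyer type paradifferential symbol whose $x$-smoothness is controlled by $\|a\|_{\M_{(3,1)}}$, and I would then apply the $L^{2}$ boundedness statement for Weyl para-products quoted just above the lemma to verify that the resulting operator has the same $H^{t}$ mapping properties as $T_{|D|^{2}a}|D|^{s-2}$, hence fits inside $\O(T_{|D|^{2}a}|D|^{s-2}f)$. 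The trickiest bookkeeping will be ensuring that the extra factor $|\xi-\eta|$ (beyond the two already absorbed into $T_{|D|^{2}a}$) can be traded for one additional $x$-derivative of $a$ while staying within the three $x$-derivatives allowed by $\M_{(3,1)}$; this is the same kind of argument used to prove the companion Lemma~\ref{weyl-comm}.
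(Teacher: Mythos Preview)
The paper does not actually prove this lemma; it simply states that the proof can be found in \cite{HSZ18p}. Your approach --- working on the Fourier side, Taylor-expanding $|\xi|^{s}$ about $\eta$ to second order, and matching successive terms to the operators in the statement --- is the natural one and is essentially the standard argument one would expect in the cited reference. The identification of the first-order correction with $sT_{Da}|D|^{s-2}Df$ and of the frozen second-order piece with $\tfrac{s(s-1)}{2}T_{|D|^{2}a}|D|^{s-2}f$ is correct, and your observation that on the support of $\chi$ one has $\xi,\eta$ of the same sign and comparable size is exactly what makes the expansion legitimate.

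Your treatment of the true remainder is also on the right track: the extra factor of $|\xi-\eta|$ beyond the two already absorbed into $|D|^{2}a$ converts to a third $x$-derivative on $a$, which is precisely the regularity afforded by the hypothesis $a\in\M_{(3,1)}$, and simultaneously gains one more order on $f$. One small caution: the paper's notation $\O(g)$ literally means that the $H^{t}$-norm of the error is bounded by $\|g\|_{H^{t}}$, and verifying this inequality with $g=T_{|D|^{2}a}|D|^{s-2}f$ (as opposed to the more direct bound $\lesssim\|a\|_{\M_{(3,1)}}\|f\|_{H^{s-2}}$) is slightly awkward since the two quantities are not pointwise comparable. In the applications later in the paper (for instance in the proof of Proposition~\ref{apriori}) only the latter type of bound is actually used, so this is a matter of notation rather than substance.
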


\subsection{Modified Bessel function of the second kind}

In this section, we summarize some definitions and properties of modified Bessel functions, which can be found in \cite{OLBC, Wat95}.
The modified Bessel function $I_\nu$ of the first kind is defined for $\nu \in \R$ by
\[
I_\nu(x) = \sum_{m = 0}^\infty \frac{1}{m! \Gamma(m + \nu + 1)} \bigg(\frac{x}{2}\bigg)^{2m + \nu}.
\]
The modified Bessel function $K_\nu$ of the second kind is defined for $\nu \notin \Z$ by
\[
K_\nu(x) = \frac{\pi}{2} \frac{I_{-\nu}(x) - I_\nu(x)}{\sin{\nu \pi}},
\]
and $K_n(x) = \lim_{\nu \to n} K_\nu(x)$ for $n \in \Z$.
When $\nu > - 1/2$ and $x > 0$, we can also write $K_\nu$  as
\begin{equation}
\label{BesselK}
K_\nu(x) = \frac{\Gamma(\nu + \frac 12) (2x)^\nu}{\sqrt{\pi}} \int_0^\infty \frac{\cos y}{(y^2 + x^2)^{\nu + 1/2}} \diff{y}.
\end{equation}
In \eqref{BesselK}, and throughout this paper, $\Gamma(z)$ denotes the Gamma function.

The following lemma collects the properties of modified Bessel functions of the second kind that we need. Properties (i)--(iv) can be found in \cite{OLBC}.

\begin{lemma}
\label{BesselProp}
The modified Bessel functions of the second kind have following properties:
\begin{enumerate}[(i)]
\item For each $\nu \geq 0$, $K_\nu(x)$ is a real-valued, analytic, strictly decreasing function on $(0, \infty)$.

\item For each fixed $x, \nu > 0$, $K_\nu(x) = K_{-\nu}(x)$.

\item If $\nu > 0$, then
\begin{align*}
K_\nu(x) \sim \frac{1}{2} \Gamma(\nu) \left(\frac{x}{2}\right)^{- \nu}, \quad K_0(x) \sim - \log(x)
 \qquad \text{as $x \to 0^+$}.
\end{align*}

\item If $\nu \geq 0$, then
\begin{align*}
K_\nu(x) \sim \sqrt{\frac{\pi}{2x}} e^{-x}\qquad \text{as $x \to \infty$}.
\end{align*}

\item Let $m \geq 0$ be an integer, and define $f_m \colon \R \times (\frac{1}{2}, \infty) \to \R$ by
\[
f_m(x, \nu) = |x|^{\nu + m} K_\nu(|x|).
\]
Then $f_m(\cdot,\nu)$ attains its maximum, and if the maximum is attained at some $x_0\in \R$, then
\begin{align}
\label{fm-max}
|x_0| \leq \sqrt{m^2 + (2 \nu - 1) m}, \qquad 0 \leq f_m(x_0, \nu) \leq \frac{\left(m^2 + (2 \nu - 1) m\right)^{m / 2} \Gamma(\nu)}{2^{\nu + 1}}.
\end{align}
\end{enumerate}
\end{lemma}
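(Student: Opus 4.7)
Properties (i)--(iv) are classical and can be read off directly from \cite{OLBC} (alternatively, recovered from \eqref{BesselK}, the identity $K_\nu=K_{-\nu}$, and standard asymptotics), so the work lies entirely in part (v). Throughout, using the evenness $f_m(x,\nu)=f_m(-x,\nu)$, I reduce to $x\geq 0$ and set $g(x)=x^{\nu+m}K_\nu(x)$. From (iii), $g(x)\sim 2^{\nu-1}\Gamma(\nu)\,x^m$ as $x\to 0^+$, and from (iv), $g(x)\sim\sqrt{\pi/2}\,x^{\nu+m-1/2}e^{-x}$ as $x\to\infty$; thus $g$ is continuous on $[0,\infty)$, vanishes at infinity and (when $m\geq 1$) at $0^+$, and attains its maximum at some $x_0\geq 0$. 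For $m=0$ the maximum sits at $x_0=0$ and both claimed bounds reduce to easy cases, so I may assume $x_0>0$.

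To bound $x_0$, I would differentiate $g$ using the identity $\frac{d}{dx}[x^\nu K_\nu(x)]=-x^\nu K_{\nu-1}(x)$, which follows from the recurrence $K_\nu'(x)=-K_{\nu-1}(x)-(\nu/x)K_\nu(x)$, to obtain $g'(x)=x^{\nu+m-1}[mK_\nu(x)-xK_{\nu-1}(x)]$. Any interior maximum therefore satisfies $r_\nu(x_0)=x_0/m$, where $r_\nu(x):=K_\nu(x)/K_{\nu-1}(x)$. A short calculation using the dual recurrence $K_{\nu-1}'(x)=-K_\nu(x)+\bigl((\nu-1)/x\bigr)K_{\nu-1}(x)$ shows that $r_\nu$ solves the Riccati equation
\[
r_\nu'(x)=r_\nu(x)^2-\frac{2\nu-1}{x}\,r_\nu(x)-1,
\]
whose positive root is $\phi(x):=\bigl(\nu-\tfrac12+\sqrt{(\nu-\tfrac12)^2+x^2}\bigr)/x$. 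I claim $r_\nu\leq\phi$ on $(0,\infty)$: the integral identity $K_\nu(x)-K_{\nu-1}(x)=2\int_0^\infty e^{-x\cosh t}\sinh\bigl((\nu-\tfrac12)t\bigr)\sinh(t/2)\,dt$ gives $r_\nu>1$ throughout, while (iv) yields $r_\nu(x),\phi(x)\to 1$ as $x\to\infty$. If $r_\nu>\phi$ at some point, the Riccati forces $r_\nu'>0$ there, and combined with $\phi'<0$ the difference $r_\nu-\phi$ strictly increases on any component of $\{r_\nu>\phi\}$, contradicting $r_\nu-\phi\to 0$ at $\infty$. Applying $r_\nu(x_0)\leq\phi(x_0)$ with $r_\nu(x_0)=x_0/m$ and isolating $x_0^2$ produces the bound $x_0^2\leq m^2+(2\nu-1)m$.

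For the value estimate, I split $f_m(x_0,\nu)=x_0^m\cdot\bigl(x_0^\nu K_\nu(x_0)\bigr)$ and bound the second factor using the Schl\"{a}fli-type identity
\[
x^\nu K_\nu(x)=2^{\nu-1}\int_0^\infty e^{-s-x^2/(4s)}s^{\nu-1}\,ds,
\]
obtained from the standard representation $K_\nu(x)=\tfrac12(x/2)^\nu\int_0^\infty e^{-t-x^2/(4t)}t^{-\nu-1}\,dt$ via the substitution $s=x^2/(4t)$. Since $e^{-x^2/(4s)}$ decreases in $x$, this integral is monotonically decreasing in $x$, so $x_0^\nu K_\nu(x_0)$ is controlled by a constant depending only on $\nu$; combined with $x_0^m\leq (m^2+(2\nu-1)m)^{m/2}$ this yields the stated upper bound on $f_m(x_0,\nu)$. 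The main obstacle I foresee is the sharp bound on $x_0$: the naive second-derivative test $g''(x_0)\leq 0$ at the critical point alone yields only the weaker estimate $x_0^2\leq m^2+2\nu m$, and the improvement to $(2\nu-1)m$ crucially exploits the global sign of the Riccati right-hand side (via the behaviour of $r_\nu$ at infinity), not just its value at $x_0$.
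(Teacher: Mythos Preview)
Your argument is correct and structurally identical to the paper's: same reduction by evenness, same derivative formula $g'(x)=x^{\nu+m-1}[mK_\nu(x)-xK_{\nu-1}(x)]$, same critical-point equation, and same splitting $f_m(x_0,\nu)=x_0^m\cdot f_0(x_0,\nu)$ for the value bound. The one substantive difference is the source of the ratio inequality: the paper simply quotes the Segura bound \cite{Seg11},
\[
\frac{K_{\nu-1}(x)}{K_\nu(x)}>\frac{x}{\sqrt{x^2+(\nu-1/2)^2}+\nu-1/2},
\]
whereas you supply a self-contained proof of the equivalent statement $r_\nu\le\phi$ via the Riccati equation for $r_\nu=K_\nu/K_{\nu-1}$ and a comparison with its equilibrium $\phi$; this is in fact the method underlying Segura's result, so what you have done is re-derive the cited lemma rather than take a different route. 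For the monotonicity of $x^\nu K_\nu(x)$ you invoke the Schl\"afli-type integral, but note that the paper gets this for free from the $m=0$ case of the derivative computation you already carried out, namely $\partial_x f_0(x,\nu)=-x^\nu K_{\nu-1}(x)\le 0$, so the separate integral representation is unnecessary.
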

\begin{proof}[Proof of (v)]
It follows from (ii) that we only need to consider $x \geq 0$.
We use the identities (see \cite{OLBC})
\begin{align*}
& K_\nu'(x) = - \frac{K_{\nu - 1}(x) + K_{\nu + 1}(x)}{2},\\
& x K_{\nu + 1}(x) - x K_{\nu - 1}(x) = 2 \nu K_\nu(x),
\end{align*}
to obtain
\begin{align*}
\frac{\partial}{\partial{x}} f_m(x, \nu) & = (\nu + m) x^{\nu + m - 1} K_\nu(x) - \frac{1}{2} x^{\nu + m} \left(K_{\nu - 1}(x) + K_{\nu + 1}(x)\right)\\
& = x^{\nu + m - 1} \left(m K_\nu(x) - x K_{\nu - 1}(x)\right).
\end{align*}

When $m = 0$ and $\nu > 1 / 2$, we have $\partial_xf_0(x,\nu)\leq 0$. Thus $f_0$ is decreasing in $x$, and its maximum is attained at $x_0 = 0$ with
\[
f_0(0, \nu) = \frac{\Gamma(\nu)}{2^{\nu + 1}}.
\]

When $m > 0$, it is clear that $f_m$ is smooth in $x \in (0, \infty)$ with
\[
f_m(0, \nu) = \lim_{x \to \infty} f_m(x, \nu) = 0,
\]
so the maximum is attained at its critical points.
Therefore, $x_0$ must satisfy
\[
\frac{m}{x_0} = \frac{K_{\nu - 1}(x_0)}{K_\nu(x_0)}.
\]

For $\nu > \frac{1}{2}$ and $x > 0$, we have that \cite{Seg11}
\[
\frac{K_{\nu - 1}(x)}{K_\nu(x)} > \frac{x}{\sqrt{x^2 + (\nu - 1 / 2)^2} + \nu - 1 / 2},
\]
which leads to the estimate of $|x_0|$ in \eqref{fm-max}.
Then, using
\[
f_m(x_0,\nu)=|x_0|^{\nu+m}K_\nu(x_0)
=|x_0|^{m}f_0(x_0,\nu)\leq |x_0|^{m}f_0(0,\nu)=|x_0|^{m}\frac{\Gamma(\nu)}{2^{\nu+1}},
\]
we obtain the upper bound for $f_m$.
\end{proof}

\section{Two-front GSQG systems}\label{sec:reg}

\subsection{Contour dynamics}
In this section, we derive contour dynamics equations for two-front solutions of the Euler, SQG, and GSQG equations. For $1\le \alpha\le 2$,
the formal contour dynamics equations diverge at infinity, and we use the regularization procedure developed in \cite{HS18} to obtain convergent front equations. Equivalent results could be obtained by decomposing the velocity field into a two-front shear flow of the type discussed in Section~\ref{sec:lin_stab} and a velocity perturbation due to the motion of the fronts, as is done in \cite{HSZ19} for one-front SQG solutions, but we find it more convenient to compute the front equations by use of the regularization procedure.

Using the GSQG equation \eqref{gsqg} and Green's theorem, we find that the velocity field of the two front solution illustrated in Figure~\ref{fig:patch+fronts}(d) is given formally by
\begin{align}
\label{cde}
\begin{split}
\u(\x, t) &= \nabla^\perp G * \theta(\x, t)\\
&= \Theta_+ \int_{\partial \Omega_+(t)} G\left(|\x - \x'|\right) \n^\perp(\x', t) \diff{s_+(\x')} + \Theta_- \int_{\partial \Omega_-(t)} G\left(|\x - \x'|\right) \n^\perp(\x', t) \diff{s_-(\x')},
\end{split}
\end{align}
where the jumps $\Theta_\pm$ are defined in \eqref{defTheta}, the Green's function for the operator $(-\Delta)^{\alpha / 2}$ on $\R^2$ is given by $g_\alpha G(|\x|)$ with
\begin{align}
\label{GreenF}
G(x) = \left\{\begin{array}{ll}- \frac{1}{2 \pi} \log|x| & \quad \text{if}\ \alpha = 2,\\[1.5ex] |x|^{-(2 - \alpha)} & \quad \text{if}\ 0 < \alpha < 2,\end{array}\right. \qquad g_\alpha = \left\{\begin{array}{ll}1 & \quad \text{if}\ \alpha = 2,\\[1ex] \frac{\Gamma(1 - \alpha / 2)}{2^\alpha \pi \Gamma(\alpha / 2)} & \quad \text{if}\ 0 < \alpha < 2,\end{array}\right.
\end{align}
$\n = (m, n)$ is the upward unit normal to $\partial \Omega_\pm(t)$, $\n^\perp = (-n, m)$, and $s_\pm(\x')$ is arc-length on $\partial \Omega_\pm(t)$.

The integrals in \eqref{cde} converge at infinity when $0 < \alpha < 1$, but diverge when $1 \leq \alpha \leq 2$. To obtain the front equations, we first cut-off the integration region to a $\lambda$-interval about some point $x\in \R$ and consider the limit $\lambda \to \infty$. If  $1 \leq \alpha \leq 2$ and $\Theta_+ +\Theta_- \ne 0$, we also make a  Galilean transformation $x\mapsto x- v(\lambda)t$, where $v(\lambda)$ is chosen to give well-defined limiting front equations and $|v(\lambda)|\to \infty$ as $\lambda \to \infty$ \cite{HS18}. We assume that the top and bottom fronts are smooth, approach $y = h_+$ and $y = h_-$ sufficiently rapidly as $|s_+(\x')| \to \infty$ and  $|s_-(\x')| \to \infty$, respectively, and do not self-intersect or intersect each other.

 Let the top and bottom fronts have parametric equations $\x = \X_1(\zeta, t)$ and $\x = \X_2(\zeta, t)$, where
 \[
 \X_1(\cdot, t), \X_2(\cdot, t) \colon \R \to \R^2.
 \]
 Since $\theta$ is transported by the velocity field, the fronts move with normal velocity
\[\begin{aligned}
\pt \X_1 \cdot \n = \u \cdot \n,\qquad
\pt \X_2 \cdot \n = \u \cdot \n,
\end{aligned}\]
so the cut-off equations for $\X_1$ and $\X_2$ are
\[\begin{aligned}
\pt \X_1(\zeta, t) &= c_1(\zeta, t) \partial_\zeta \X_1(\zeta, t) - \Theta_+ \int_{\zeta -\lambda}^{\zeta + \lambda} G\left(|\X_1(\zeta', t) - \X_1(\zeta, t)|\right) \partial_{\zeta'} \X_1(\zeta', t) \diff{\zeta'}\\
&\hspace{2in} - \Theta_- \int_{\zeta - \lambda}^{\zeta + \lambda} G\left(|\X_2(\zeta', t) - \X_1(\zeta, t)|\right) \partial_{\zeta'} \X_2(\zeta', t) \diff{\zeta'},\\
\pt \X_2(\zeta, t) &= c_2(\zeta, t) \partial_\zeta \X_2(\zeta, t) - \Theta_+ \int_{\zeta - \lambda}^{\zeta + \lambda} G\left(|\X_1(\zeta', t) - \X_2(\zeta, t)|\right) \partial_{\zeta'} \X_1(\zeta', t) \diff{\zeta'}\\
&\hspace{2in} - \Theta_- \int_{\zeta - \lambda}^{\zeta + \lambda} G\left(|\X_2(\zeta', t) - \X_2(\zeta, t)|\right) \partial_{\zeta'} \X_2(\zeta', t) \diff{\zeta'},
\end{aligned}\]
where $c_1(\zeta, t)$ and $c_2(\zeta, t)$ are arbitrary functions corresponding to time-dependent reparametrizations of the fronts.

If the fronts are given by graphs that are perturbations of $y = h_+$ and $y = h_-$, then the top front is located at $y = h_+ + \varphi(x, t)$ and the bottom front at $y = h_- + \psi(x, t)$,
and we can solve for $c_1$ and $c_2$ to get
\[\begin{aligned}
c_1(x, t) &= \Theta_+ \int_{-\lambda}^\lambda G\left(\sqrt{\zeta^2 + \big(\varphi(x + \zeta, t) - \varphi(x, t)\big)^2}\right) \diff{\zeta}
\\
&+ \Theta_- \int_{-\lambda}^\lambda G\left(\sqrt{\zeta^2 + \big(-2h + \psi(x + \zeta, t) - \varphi(x, t)\big)^2}\right) \diff{\zeta},\\
c_2(x, t) &= \Theta_+ \int_{-\lambda}^\lambda G\left(\sqrt{\zeta^2 + \big(2h + \varphi(x + \zeta, t) - \psi(x, t)\big)^2}\right) \diff{\zeta}
\\
&+ \Theta_- \int_{-\lambda}^\lambda G\left(\sqrt{\zeta^2 + \big(\psi(x + \zeta, t) - \psi(x, t)\big)^2}\right) \diff{\zeta}.
\end{aligned}\]
We then obtain a coupled system for $\varphi$ and $\psi$
\begin{equation}
\label{cutoffgsqg}
\begin{aligned}
\varphi_t(x, t) &+ \Theta_+ \int_{-\lambda}^\lambda \left[\varphi_x(x + \zeta, t) - \varphi_x(x, t)\right] G\left(\sqrt{\zeta^2 + \big(\varphi(x + \zeta, t) - \varphi(x, t)\big)^2}\right) \diff{\zeta}\\
&\quad + \Theta_- \int_{-\lambda}^\lambda \left[\psi_x(x + \zeta, t) - \varphi_x(x, t)\right] G\left(\sqrt{\zeta^2 + \big(-2h + \psi(x + \zeta, t) - \varphi(x, t)\big)^2}\right) \diff{\zeta} = 0,\\[1ex]
\psi_t(x, t) &+ \Theta_+ \int_{-\lambda}^\lambda \left[\varphi_x(x + \zeta, t) - \psi_x(x, t)\right] G\left(\sqrt{\zeta^2 + \big(2h + \varphi(x + \zeta, t) - \psi(x, t)\big)^2}\right) \diff{\zeta}\\
&\quad + \Theta_- \int_{-\lambda}^\lambda \left[\psi_x(x + \zeta, t) - \psi_x(x, t)\right] G\left(\sqrt{\zeta^2 + \big(\psi(x + \zeta, t) - \psi(x, t)\big)^2}\right) \diff{\zeta} = 0.
\end{aligned}
\end{equation}

\subsection{Cut-off regularization}

As is in \cite{HS18}, we consider separately the cases $0 < \alpha < 1$, $\alpha = 1$, and $1 < \alpha \leq 2$, since the Green's functions in \eqref{GreenF} have different rates of growth or decay as $x \to 0$ and $|x| \to \infty$. We rewrite the system \eqref{cutoffgsqg} as
\begin{align}
\label{lambda-K-G}
\begin{split}
\varphi_t(x, t) &+ \Theta_+ \px \int_{-\lambda}^\lambda H_1\big(\zeta, \varphi(x + \zeta, t) - \varphi(x, t)\big) \diff{\zeta} + \Theta_+ \px \int_{-\lambda}^\lambda G(\zeta) \big[\varphi(x + \zeta, t) - \varphi(x, t)\big] \diff{\zeta}\\
&+ \Theta_- \px \int_{-\lambda}^\lambda H_2\big(\zeta, -2h + \psi(x + \zeta, t) - \varphi(x, t)\big) \diff{\zeta}\\
& \hspace{1.5in} + \Theta_- \px \int_{-\lambda}^\lambda G\left(\sqrt{\zeta^2 + (2h)^2}\right) \big[\psi(x + \zeta, t) - \varphi(x, t)\big] \diff{\zeta} = 0,\\
\psi_t(x, t) &+ \Theta_- \px \int_{-\lambda}^\lambda H_1\big(\zeta, \psi(x + \zeta, t) - \psi(x, t)\big) \diff{\zeta} + \Theta_- \px \int_{-\lambda}^\lambda G(\zeta) \big[\psi(x + \zeta, t) - \psi(x, t)\big] \diff{\zeta}\\
&+ \Theta_+ \px \int_{-\lambda}^\lambda H_2\big(\zeta, 2h + \varphi(x + \zeta, t) - \psi(x, t)\big) \diff{\zeta}\\
& \hspace{1.5in} + \Theta_+ \px \int_{-\lambda}^{\lambda} G\left(\sqrt{\zeta^2 + (2h)^2}\right) \big[\varphi(x + \zeta, t) - \psi(x, t)\big] \diff{\zeta} = 0
\end{split}
\end{align}
where
\begin{align}
\label{H12}
\begin{split}
H_1(x, y) &= - G(x) y + \int_0^y G\Big(\sqrt{x^2 + s^2}\Big) \diff{s},\\
H_2(x, y) &= - G\left(\sqrt{x^2 + (2h)^2}\right) y + \int_0^y G\Big(\sqrt{x^2 + s^2}\Big) \diff{s}.
\end{split}
\end{align}
When $G$ is given by \eqref{GreenF} we have for $j = 1, 2$ and fixed $y$ that
\[
H_j(x, y) = O\bigg(\frac{1}{|x|^{4 - \alpha}}\bigg) \quad \text{as}\ |x| \to \infty.
\]
It follows that the nonlinear terms in \eqref{lambda-K-G} converge as $\lambda \to \infty$, so it suffices to consider linear terms in \eqref{lambda-K-G}.

We only write out the computation for the first equation; the computation for the second equation is similar. The linear term
\[
\L_{1, \lambda}\varphi(x, t) := \int_{-\lambda}^\lambda G(\zeta) \left[\vp(x + \zeta, t) - \varphi(x, t)\right] \diff{\zeta}
\]
can be written as \cite{HS18}
\[
\L_{1, \lambda} \vp(x, t) = v_1(\lambda) \vp(x, t) + \L_{1, \lambda}^* \vp(x, t),
\]
where
\begin{equation}
\label{defv1}
v_1(\lambda) =
\begin{dcases}
~ 0 & \qquad \text{if}\ 0 < \alpha < 1,\\ - 2 \int_1^\lambda G(\zeta) \diff{\zeta} & \qquad \text{if}\ \alpha = 1,\\ - 2 \int_0^\lambda G(\zeta) \diff{\zeta} & \qquad \text{if}\ 1 < \alpha \leq 2,
\end{dcases}
\end{equation}
and $\L_{1, \lambda}^* \vp \to {\L}_1 \vp$ as $\lambda \to \infty$, where ${\L}_1$ is the Fourier multiplier with symbol
\begin{align}
\label{L1Symb}
\begin{split}
b_1(\xi) &=
\begin{dcases}
- \int_\R G(\zeta) \left(1 - e^{i \xi \zeta}\right) \diff{\zeta} & \qquad \text{if}\ 0 < \alpha < 1,\\
\int_{|\zeta| > 1} G(\zeta) e^{i \xi \zeta} \diff{\zeta} - \int_{|\zeta| < 1} G(\zeta) \left(1 - e^{i \xi \zeta}\right) \diff{\zeta} & \qquad \text{if}\ \alpha = 1,\\
\int_\R G(\zeta) e^{i \xi \zeta} \diff{\zeta} & \qquad \text{if}\ 1 < \alpha \leq 2,
\end{dcases}\\
&=
\begin{dcases}
2 \sin\bigg(\frac{\pi \alpha}{2}\bigg) \Gamma(\alpha - 1) |\xi|^{1 - \alpha} & \qquad \text{if}\ \alpha \in (0, 2) \setminus \{1\},\\
- 2 \gamma - 2 \log|\xi| & \qquad \text{if}\ \alpha = 1,\\
\gamma \delta(\xi) + \frac{1}{2} \fp \frac{1}{|\xi|} & \qquad \text{if}\ \alpha = 2.
\end{dcases}
\end{split}
\end{align}
Here, $\gamma$ is the Euler--Mascheroni constant \cite{Vla71}.

As for the second linear term, we have
\begin{align*}
\L_{2, \lambda} \left[\vp, \psi\right](x, t) &:= \int_{-\lambda}^\lambda G\left(\sqrt{\zeta^2 + (2h)^2}\right) \big[\psi(x + \zeta, t) - \varphi(x, t)\big] \diff{\zeta}\\
&= v_2(\lambda) \vp(x, t) + v_3(\lambda) \vp(x, t) + \L_{2, \lambda}^* \psi(x, t),
\end{align*}
where $v_2(\lambda)$ is a divergent part (or zero if $\L_{2, \lambda}$ converges) and $v_3(\lambda)$ is a convergent part
\begin{align}
\label{defv2}
v_2(\lambda) &=
\begin{dcases}
0 & \quad \text{if}\ 0 < \alpha < 1,\\
- 2 \int_1^\lambda G\left(\sqrt{\zeta^2 + (2h)^2}\right) \diff{\zeta} & \quad \text{if}\ \alpha = 1,\\
- 2 \int_0^\lambda G\left(\sqrt{\zeta^2 + (2h)^2}\right) \diff{\zeta} & \quad \text{if}\ 1 < \alpha \leq 2,
\end{dcases}\\[1ex]
v_3(\lambda) &=
\begin{dcases}
- 2 \int_0^\lambda G\left(\sqrt{\zeta^2 + (2h)^2}\right) \diff{\zeta} & \quad \text{if}\ 0 < \alpha < 1,\\
- 2 \int_0^1 G\left(\sqrt{\zeta^2 + (2h)^2}\right) \diff{\zeta} & \quad \text{if}\ \alpha = 1,\\
0 & \quad \text{if}\ 1 < \alpha \leq 2,
\end{dcases}
\label{defv3}
\end{align}
and $\L_{2, \lambda}^* \psi \to \L_2 \psi$ as $\lambda \to \infty$, where $\L_2$ is the Fourier multiplier with symbol
\begin{align}
\label{L2Symb}
\begin{split}
b_2(\xi) &= \int_\R G\left(\sqrt{\zeta^2 + (2h)^2}\right) e^{i \xi \zeta} \diff{\zeta}\\
& =
\begin{dcases}
\frac{2 \sqrt{\pi}}{\Gamma\left(1 - \frac{\alpha}{2}\right) (4h)^{\frac{1 - \alpha}{2}}} |\xi|^{\frac{1 - \alpha}{2}} K_{\frac{1 - \alpha}{2}}\left(2 h |\xi|\right) & \quad \text{if}\ 0 < \alpha < 2,\\
\frac{e^{- 2 h |\xi|}}{2 |\xi|} & \quad \text{if}\ \alpha = 2.
\end{dcases}
\end{split}
\end{align}
In \eqref{L2Symb}, we use the definition of $K_\nu$ in \eqref{BesselK} for $0<\alpha <2$, and for $\alpha = 2$, we use the fact that
\[
\F\bigg[\frac{1}{|\cdot|^2 + c}\bigg](\xi) = \frac{\pi}{\sqrt{c}} e^{-\sqrt{c} |\xi|}
\]
for any $c > 0$, which gives
\[
-\frac{1}{4\pi} \F[\log(|\cdot|^2 + (2h)^2)](\xi) = -\frac{1}{4} \int \frac{e^{-\sqrt{c} |\xi|}}{\sqrt{c}} \diff{c} \bigg|_{c = (2h)^2} = \frac{e^{-2h |\xi|}}{2 |\xi|}.
\]

We denote by $v_4$ the limit
\begin{align*}
v_4 = \lim_{\lambda \to \infty} v_3(\lambda) = \begin{dcases}
- B\big(1 / 2, (1 - \alpha) / 2\big) (2h)^{\alpha - 1} & \quad \text{if}\ 0 < \alpha < 1,\\
2 \log(2 h) - 2 \log\left(1 + \sqrt{1 + (2h)^2}\right) & \quad \text{if}\ \alpha = 1,\\
0 & \quad \text{if}\ 1 < \alpha \leq 2,
\end{dcases}
\end{align*}
where $v_3(\lambda)$ is given in \eqref{defv3}, with $G$ given by \eqref{GreenF}, and $B$ is the Beta function
\begin{equation}
\label{betafn}
B(a, b) = \frac{\Gamma(a) \Gamma(b)}{\Gamma(a + b)}.
\end{equation}

The cut-off system \eqref{lambda-K-G} can then be written as
\begin{align*}
\begin{split}
& \varphi_t(x, t) + \left[\Theta_+ v_1(\lambda) + \Theta_- v_2(\lambda) + \Theta_- v_3(\lambda)\right] \varphi_x(x, t) + \Theta_+ \L_{1, \lambda}^* \varphi_x(x, t) + \Theta_- \L_{2, \lambda}^* \psi_x(x, t)\\
& \quad + \Theta_+ \px \int_{-\lambda}^\lambda H_1\big(\zeta, \varphi(x + \zeta, t) - \varphi(x, t)\big) \diff{\zeta} + \Theta_- \px \int_{-\lambda}^\lambda H_2\big(\zeta, -2h + \psi(x + \zeta, t) - \varphi(x, t)\big) \diff{\zeta} = 0,\\
& \psi_t(x, t) + \left[\Theta_- v_1(\lambda) + \Theta_+ v_2(\lambda) + \Theta_+ v_3(\lambda)\right] \psi_x(x, t) + \Theta_- \L_{1, \lambda}^* \psi_x(x, t) + \Theta_+ \L_{2, \lambda}^* \varphi_x(x, t)\\
& \quad + \Theta_- \px \int_{-\lambda}^\lambda H_1\big(\zeta, \psi(x + \zeta, t) - \psi(x, t)\big) \diff{\zeta} + \Theta_+ \px \int_{-\lambda}^\lambda H_2\big(\zeta, 2h + \varphi(x + \zeta, t) - \psi(x, t)\big) \diff{\zeta} = 0.
\end{split}
\end{align*}

In the limit $\lambda \to \infty$, the possibly problematic terms in these equations are $\left[\Theta_+ v_1(\lambda) + \Theta_- v_2(\lambda)\right] \vp_x(x, t)$ and $\left[\Theta_- v_1(\lambda) + \Theta_+ v_2(\lambda)\right] \psi_x(x, t)$. The only case when these two terms converge to finite limits are when $\Theta_+ = - \Theta_-$ or $0 < \alpha < 1$. Otherwise, we regularize the equations by choosing a suitable Galilean transformation. Indeed, if we choose
\[
v(\lambda) = \frac{\Theta_+ + \Theta_-}{2} \left(v_1(\lambda) + v_2(\lambda) + v_3(\lambda)\right),
\]
and make a Galilean transformation $x \mapsto x - v(\lambda) t$, then the system becomes
\begin{align*}
\begin{split}
& \varphi_t(x, t) + \frac{\Theta_+ - \Theta_-}{2} \left(v_1(\lambda) -  v_2(\lambda) - v_3(\lambda)\right) \varphi_x(x, t) + \Theta_+ \L_{1, \lambda}^* \varphi_x(x, t) + \Theta_- \L_{2, \lambda}^* \psi_x(x, t)\\
& \qquad + \Theta_+ \px \int_{-\lambda}^\lambda H_1\big(\zeta, \varphi(x + \zeta, t) - \varphi(x, t)\big) \diff{\zeta} + \Theta_- \px \int_{-\lambda}^\lambda H_2\big(\zeta, -2h + \psi(x + \zeta, t) - \varphi(x, t)\big) \diff{\zeta} = 0,\\
& \psi_t(x, t) - \frac{\Theta_+ - \Theta_-}{2} \left(v_1(\lambda) - v_2(\lambda) - v_3(\lambda)\right) \psi_x(x, t) + \Theta_- \L_{1, \lambda}^* \psi_x(x, t) + \Theta_+ \L_{2, \lambda}^* \varphi_x(x, t)\\
& \qquad + \Theta_- \px \int_{-\lambda}^\lambda H_1\big(\zeta, \psi(x + \zeta, t) - \psi(x, t)\big) \diff{\zeta} + \Theta_+ \px \int_{-\lambda}^\lambda H_2\big(\zeta, 2h + \varphi(x + \zeta, t) - \psi(x, t)\big) \diff{\zeta} = 0.
\end{split}
\end{align*}

The asymptotic behavior of $G(\zeta)$ and $G\left(\sqrt{\zeta^2 + (2h)^2}\right)$
as $\zeta \to \infty$ is given by
\begin{align*}
G(\zeta) & \sim
\begin{dcases}
\frac{1}{\zeta^{2 - \alpha}} & \quad \text{if}\ 1 \leq \alpha < 2,\\[1ex]
- \frac{1}{2 \pi} \log \zeta & \quad \text{if}\ \alpha = 2,
\end{dcases}\\[1ex]
G\left(\sqrt{\zeta^2 + (2h)^2}\right) & \sim
\begin{dcases}
\frac{1}{\zeta^{2 - \alpha}} + O\bigg(\frac{h}{\zeta^{4 - \zeta}}\bigg) & \quad \text{if}\ 1 \leq \alpha < 2,\\[1ex]
- \frac{1}{2 \pi} \log \zeta + O\bigg(\frac{h}{\zeta^2}\bigg) & \quad \text{if}\ \alpha = 2.
\end{dcases}
\end{align*}

Therefore, from \eqref{defv1} and \eqref{defv2}, we see that $v_1(\lambda) - v_2(\lambda)$ converges as $\lambda \to \infty$, and we define
\begin{align*}
v_5 = \lim_{\lambda \to \infty} \left[v_1(\lambda) - v_2(\lambda)\right] =
\begin{dcases}
0 & \quad \text{if}\ 0 < \alpha < 1,\\
2 \log{2} - 2 \log\left(1 + \sqrt{1 + (2h)^2}\right) & \quad \text{if}\ \alpha = 1,\\
B\big(1 / 2, (1 - \alpha) / 2\big) (2h)^{\alpha - 1} & \quad \text{if}\ 1 < \alpha < 2,\\
- h & \quad \text{if}\ \alpha = 2.
\end{dcases}
\end{align*}

Putting everything together and letting $\lambda \to \infty$, we get the regularized system in conservative form
\begin{align}
\label{reg-GSQG}
\begin{split}
& \varphi_t(x, t) + v \varphi_x(x, t) + \Theta_+ {\L}_1 \varphi_x(x, t) + \Theta_- \L_2 \psi_x(x, t)\\
&+ \Theta_+ \px \int_\R H_1\big(\zeta, \varphi(x + \zeta, t) - \varphi(x, t)\big) \diff{\zeta} + \Theta_- \px \int_\R H_2\big(\zeta, -2h + \psi(x + \zeta, t) - \varphi(x, t)\big) \diff{\zeta} = 0,\\
& \psi_t(x, t) - v \psi_x(x, t) + \Theta_- {\L}_1 \psi_x(x, t) + \Theta_+ \L_2 \varphi_x(x, t)\\
&+ \Theta_- \px \int_\R H_1\big(\zeta, \psi(x + \zeta, t) - \psi(x, t)\big) \diff{\zeta} + \Theta_+ \px \int_\R H_2\big(\zeta, 2h + \varphi(x + \zeta, t) - \psi(x, t)\big) \diff{\zeta} = 0,
\end{split}
\end{align}
where $H_1$, $H_2$ are given in \eqref{H12}, the symbols of $\L_1$, $\L_2$ are given in \eqref{L1Symb}--\eqref{L2Symb}, and
\begin{equation}
\label{defv}
v= \frac{\Theta_+ - \Theta_-}{2}\left(v_5 - v_4\right),\qquad
v_5-v_4 = \begin{cases}
B(1/2,(1-\alpha)/2) (2h)^{\alpha-1} & \text{if $\alpha\in (0,1)\cup(1,2)$},
\\
-2\log h & \text{if $\alpha =1$},
\\
-h & \text{if $\alpha = 2$}.\end{cases}
\end{equation}
One can also take the derivatives inside the integrals to obtain the non-conservative form
\begin{eqnarray}
\label{reg-GSQG-nc}
\begin{split}
& \varphi_t(x, t) + v \varphi_x(x, t) + \Theta_+ {\L}_1 \varphi_x(x, t) + \Theta_- \L_2 \psi_x(x, t)\\
&+ \Theta_+ \int_\R \left[\varphi_x(x + \zeta, t) - \varphi_x(x, t)\right] \left\{G\left(\sqrt{\zeta^2 + [\vp(x + \zeta, t) - \vp(x, t)]^2}\right) - G(\zeta)\right\} \diff{\zeta}\\
&+ \Theta_- \int_\R \left[\psi_x(x + \zeta, t) - \vp_x(x, t)\right] \left\{G\left(\sqrt{\zeta^2 + [-2h + \psi(x + \zeta, t) - \varphi(x, t)]^2}\right) - G\left(\sqrt{\zeta^2 + (2h)^2}\right)\right\} \diff{\zeta} = 0,\\
& \psi_t(x, t) - v \psi_x(x, t) + \Theta_- {\L}_1 \psi_x(x, t) + \Theta_+ \L_2 \varphi_x(x, t)\\
&+ \Theta_- \int_\R \left[\psi_x(x + \zeta, t) - \psi_x(x, t)\right] \left\{G\left(\sqrt{\zeta^2 + [\psi(x + \zeta, t) - \psi(x, t)]^2}\right) - G(\zeta)\right\} \diff{\zeta}\\
& + \Theta_+ \int_\R \left[\vp_x(x + \zeta, t) - \psi_x(x, t)\right] \left\{G\left(\sqrt{\zeta^2 + [2h + \vp(x + \zeta, t) - \psi(x, t)]^2}\right) - G\left(\sqrt{\zeta^2 + (2h)^2}\right)\right\} \diff{\zeta} = 0.
\end{split}
\end{eqnarray}

The system \eqref{reg-GSQG} has the Hamiltonian form
\begin{align*}
\varphi_t + J_+ \frac{\delta \mathcal{H}}{\delta \varphi}=0,
\quad
\psi_t + J_-\frac{\delta \mathcal{H}}{\delta \psi} = 0,\qquad J_+ = \frac{1}{\Theta_+} \px,\quad J_- = \frac{1}{\Theta_-} \px,
\end{align*}
with the Hamiltonian
\begin{align*}
&\mathcal{H}(\varphi, \psi) = \frac{1}{2} \int_\R \bigg\{v \Theta_+\varphi^2 - v \Theta_- \psi^2 + \Theta_+^2 \varphi {\L}_1 \varphi
+ 2\Theta_+ \Theta_- \varphi \L_2 \psi + \Theta_- ^2\psi {\L}_1 \psi\bigg\} \diff{x}\\
& \qquad +\frac{1}{2} \int_{\R^2} \bigg\{\Theta_+^2 F_1(x - x', \varphi - \varphi')
+
 2\Theta_+\Theta_- F_2(x-x', 2h+\varphi-\psi')
+ \Theta_- ^2 F_1(x - x',\psi - \psi') \bigg\} \diff{x} \diff{x'},
\end{align*}
where $\varphi=\varphi(x,t)$, $\varphi' = \varphi(x',t)$, $\psi=\psi(x,t)$, $\psi' = \psi(x',t)$,
and the functions $F_1$, $F_2$ satisfy
\[
F_{1y}(x,y) = H_1(x,y),\qquad F_{2y}(x,y) = H_2(x,y).
\]

\subsection{Regularized systems}

We write out  specific expressions for the non-conservative two-front systems \eqref{reg-GSQG-nc}  in the cases
$\alpha = 2$ (Euler), $\alpha = 1$ (SQG), and $0<\alpha<1$ or $1< \alpha <2$ (GSQG).

\subsubsection{Euler equations ($\alpha = 2$)}
In the case of Euler equations, the Green's function is
$G(x) = - \log|x|/2\pi$, and
the two-front Euler system is
\begin{eqnarray}
\label{eulersys}
\begin{split}
& \varphi_t(x, t) - \frac{\Theta_+ - \Theta_-}{2} h \varphi_x(x, t) - \frac{\Theta_+}{2} \hilbert \varphi(x, t) - \frac{\Theta_-}{2} e^{-2 h |\px|} \hilbert \psi(x, t)\\
&- \frac{\Theta_+}{2 \pi} \int_\R \big[\varphi_x(x + \zeta, t) - \varphi_x(x, t)\big] \log\left(\sqrt{1 + \bigg[\frac{\varphi(x + \zeta, t) - \varphi(x, t)}{\zeta}\bigg]^2}\right) \diff{\zeta}\\
&- \frac{\Theta_-}{2 \pi} \int_\R \big[\psi_x(x + \zeta, t) - \varphi_x(x, t)\big] \log\left(\sqrt{1 - \frac{4h (\psi(x + \zeta, t) - \varphi(x, t))}{\zeta^2 + (2h)^2} + \frac{(\psi(x + \zeta, t) - \varphi(x, t))^2}{\zeta^2 + (2h)^2}}\right) \diff{\zeta} = 0,\\
& \psi_t(x, t) + \frac{\Theta_+ - \Theta_-}{2} h \psi_x(x, t) - \frac{\Theta_-}{2} \hilbert \psi(x, t) - \frac{\Theta_+}{2} e^{- 2 h |\px|} \hilbert \varphi(x, t)\\
&- \frac{\Theta_-}{2 \pi} \int_\R \big[\psi_x(x + \zeta, t) - \psi_x(x, t)\big] \log\left(\sqrt{1 + \bigg[\frac{\psi(x + \zeta, t) - \psi(x, t)}{\zeta}\bigg]^2}\right) \diff{\zeta}\\
&- \frac{\Theta_+}{2 \pi} \int_\R \big[\varphi_x(x + \zeta, t) - \psi_x(x, t)\big] \log\left(\sqrt{1 + \frac{4h (\varphi(x + \zeta, t) - \psi(x, t))}{\zeta^2 + (2h)^2} + \frac{(\varphi(x + \zeta, t) - \psi(x, t))^2}{\zeta^2 + (2h)^2}}\right) \diff{\zeta} = 0.
\end{split}
\end{eqnarray}
Here, $\hilbert$ is the Hilbert transform with symbol $- i \sgn \xi$.

\subsubsection{SQG equations ($\alpha = 1$)}

In the case of SQG equation, the Green's function is
$G(x) = {1}/{|x|}$,
and (with an additional Galilean transformation $x \mapsto x + \gamma (\Theta_+ + \Theta_-) t$) the two-front SQG system is
\begin{eqnarray}
\label{sqgsys}
\begin{split}
& \varphi_t(x, t) - (\Theta_+ - \Theta_-) (\gamma + \log{h}) \varphi_x(x, t) - 2 \Theta_+ \log|\px| \varphi_x(x, t) + 2 \Theta_- K_0(2 h |\px|) \psi_x(x, t)\\
&+ \Theta_+ \int_\R \big[\varphi_x(x + \zeta, t) - \varphi_x(x, t)\big] \bigg\{\frac{1}{\sqrt{\zeta^2 + (\varphi(x + \zeta, t) - \varphi(x, t))^2}} - \frac{1}{|\zeta|}\bigg\} \diff{\zeta}\\
&+ \Theta_- \int_\R \big[\psi_x(x + \zeta, t) - \varphi_x(x, t)\big] \bigg\{\frac{1}{\sqrt{\zeta^2 + (-2h + \psi(x + \zeta, t) - \varphi(x, t))^2}} - \frac{1}{\sqrt{\zeta^2 + (2h)^2}}\bigg\} \diff{\zeta} = 0,\\
& \psi_t(x, t) + (\Theta_+ - \Theta_-) (\gamma + \log{h}) \psi_x(x, t) - 2 \Theta_- \log|\px| \psi_x(x, t) + 2 \Theta_+ K_0(2 h |\px|) \varphi_x(x, t)\\
&+ \Theta_- \int_\R \big[\psi_x(x + \zeta, t) - \psi_x(x, t)\big] \bigg\{\frac{1}{\sqrt{\zeta^2 + (\psi(x + \zeta, t) - \psi(x, t))^2}} - \frac{1}{|\zeta|}\bigg\} \diff{\zeta}\\
&+ \Theta_+ \int_\R \big[\varphi_x(x + \zeta, t) - \psi_x(x, t)\big] \bigg\{\frac{1}{\sqrt{\zeta^2 + (2h + \varphi(x + \zeta, t) - \psi(x, t))^2}} - \frac{1}{\sqrt{\zeta^2 + (2h)^2}}\bigg\} \diff{\zeta} = 0.
\end{split}
\end{eqnarray}

\subsubsection{GSQG equations}
In this case, the Green's function is
$G(x) = {1}/{|x|^{2 - \alpha}}$, $\alpha \in (0, 1) \cup (1, 2)$, the two-front GSQG system is
\begin{eqnarray}
\label{gsqgsys12}
\begin{split}
& \varphi_t(x, t) + \frac{\Theta_+ - \Theta_-}{2} B\bigg(\frac{1}{2}, \frac{1 - \alpha}{2}\bigg) (2h)^{\alpha - 1} \varphi_x(x, t)\\
&- 2 \Theta_+ \sin\bigg(\frac{\pi \alpha}{2}\bigg) \Gamma(\alpha - 1) |\px|^{2 - \alpha} \hilbert \varphi(x, t) + \Theta_- \frac{2 \sqrt{\pi}}{\Gamma\left(1 - \frac{\alpha}{2}\right) (4h)^{\frac{1 - \alpha}{2}}} |\px|^{\frac{1 - \alpha}{2}} K_{\frac{1 - \alpha}{2}}(2 h |\px|) \psi_x(x, t)\\
&+ \Theta_+ \int_\R \big[\varphi_x(x + \zeta, t) - \varphi_x(x, t)\big] \bigg\{\frac{1}{(\zeta^2 + (\varphi(x + \zeta, t) - \varphi(x, t))^2)^{1 - \frac{\alpha}{2}}} - \frac{1}{|\zeta|^{2 - \alpha}}\bigg\} \diff{\zeta}\\
&+ \Theta_- \int_\R \big[\psi_x(x + \zeta, t) - \varphi_x(x, t)\big] \bigg\{\frac{1}{(\zeta^2 + (- 2 h + \psi(x + \zeta, t) - \varphi(x, t))^2)^{1 - \frac{\alpha}{2}}} - \frac{1}{(\zeta^2 + (2 h)^2)^{1 - \frac{\alpha}{2}}}\bigg\} \diff{\zeta} = 0,\\
& \psi_t(x, t) - \frac{\Theta_+ - \Theta_-}{2} B\bigg(\frac{1}{2}, \frac{1 - \alpha}{2}\bigg) (2h)^{\alpha - 1} \psi_x(x, t)\\
&- 2 \Theta_- \sin\bigg(\frac{\pi \alpha}{2}\bigg) \Gamma(\alpha - 1) |\px|^{2 - \alpha} \hilbert \psi(x, t) + \Theta_+ \frac{2 \sqrt{\pi}}{\Gamma\left(1 - \frac{\alpha}{2}\right) (4h)^{\frac{1 - \alpha}{2}}} |\px|^{\frac{1 - \alpha}{2}} K_{\frac{1 - \alpha}{2}}(2 h |\px|) \varphi_x(x, t)\\
&+ \Theta_- \int_\R \big[\psi_x(x + \zeta, t) - \psi_x(x, t)\big] \bigg\{\frac{1}{(\zeta^2 + (\psi(x + \zeta, t) - \psi(x, t))^2)^{1 - \frac{\alpha}{2}}} - \frac{1}{|\zeta|^{2 - \alpha}}\bigg\} \diff{\zeta}\\
&+ \Theta_+ \int_\R \big[\varphi_x(x + \zeta, t) - \psi_x(x, t)\big] \bigg\{\frac{1}{(\zeta^2 + (2 h + \varphi(x + \zeta, t) - \psi(x, t))^2)^{1 - \frac{\alpha}{2}}} - \frac{1}{(\zeta^2 + (2 h)^2)^{1 - \frac{\alpha}{2}}}\bigg\} \diff{\zeta} = 0.
\end{split}
\end{eqnarray}

\subsection{Scalar reductions of the equations}

In this subsection, we write out two scalar equations that arise as reductions of the system \eqref{reg-GSQG-nc} when
the jumps are symmetric or anti-symmetric.

\subsubsection{Symmetric reduction}

If $\Theta_+ = \Theta_-$, then $v=0$ from \eqref{defv}, and the system \eqref{reg-GSQG-nc} is compatible with solutions of the form
$\psi(x, t) = - \vp(x, t)$,
when it reduces to a scalar equation for $\vp$.  Writing $\Theta = \Theta_+ = \Theta_-$, we find that the equation becomes
\begin{eqnarray}
\label{symm-gsqg}
\begin{split}
& \varphi_t(x, t)
 + \Theta \left(\L_1 - \L_2\right) \varphi_x(x, t)
 \\
&+ \Theta \int_\R \left[\varphi_x(x + \zeta, t) - \varphi_x(x, t)\right] \left\{G\left(\sqrt{\zeta^2 + [\vp(x + \zeta, t) - \vp(x, t)]^2}\right) - G(\zeta)\right\} \diff{\zeta}\\
&+ \Theta \int_\R \left[\vp_x(x + \zeta, t) + \vp_x(x, t)\right] \left\{G\left(\sqrt{\zeta^2 + [2h + \vp(x + \zeta, t) + \varphi(x, t)]^2}\right) - G\left(\sqrt{\zeta^2 + (2h)^2}\right)\right\} \diff{\zeta} = 0.
\end{split}
\end{eqnarray}

\begin{figure}[h]
\centering
\begin{subfigure}[]{0.475\textwidth}
\includegraphics[width=\textwidth]{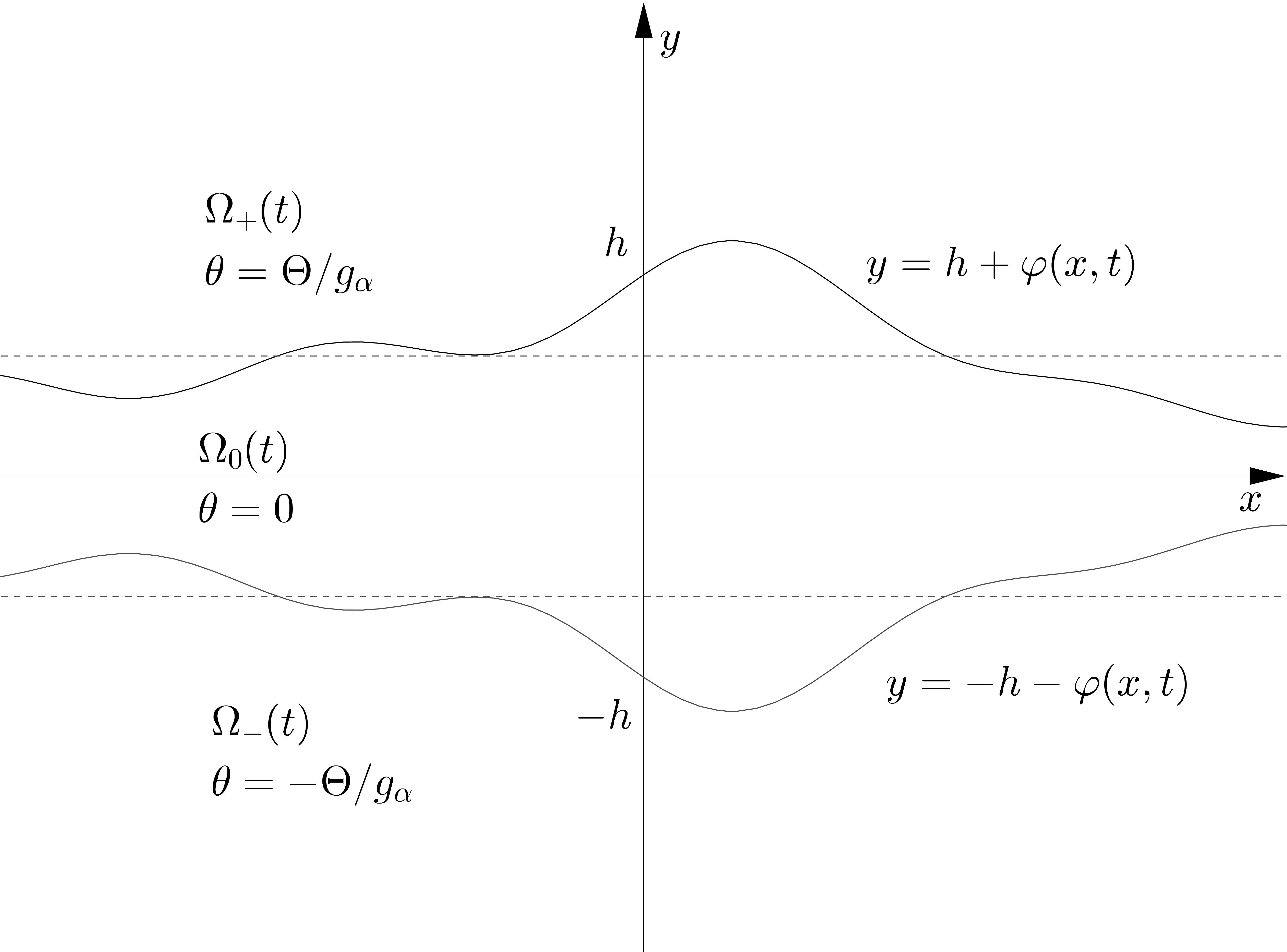}
\caption{Symmetric GSQG fronts.}
\end{subfigure}~
\begin{subfigure}[]{0.475\textwidth}
\includegraphics[width=\textwidth]{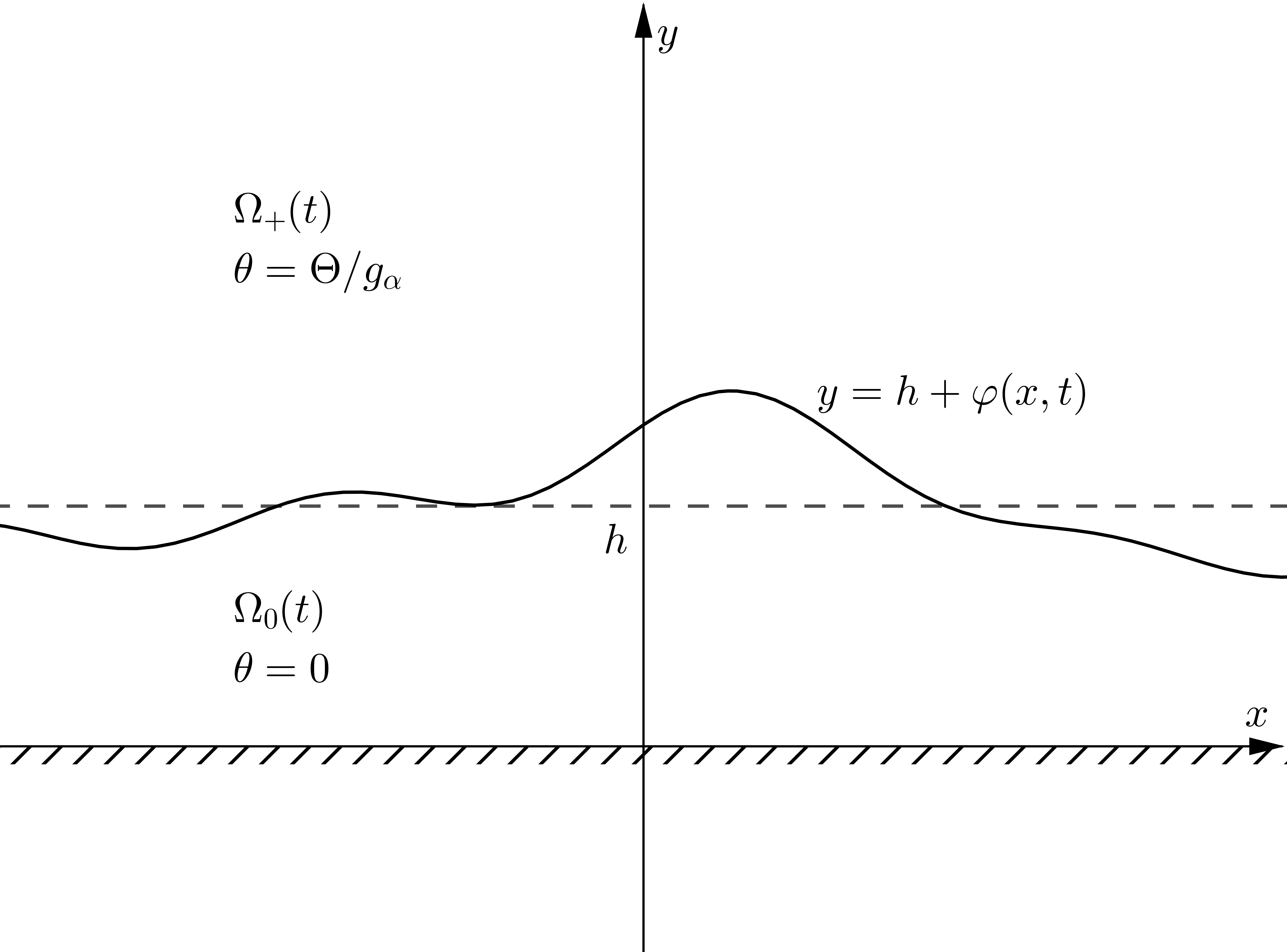}
\caption{GSQG front with a rigid flat bottom.}
\end{subfigure}
\caption{Symmetric reduction of GSQG system.}\label{fig:symm-red}
\end{figure}

For the GSQG equations \eqref{gsqg} in the spatial upper half-plane $\R \times \R_+$ with no-flow boundary conditions on a rigid boundary $y = 0$ (see Figure \ref{fig:symm-red} and \cite{GP18p, KRYZ16, KYZ17}), we find by the method of images that
\[
\u(\x, t) =
g_\alpha \int_{\R \times \R_+} \biggl\{\nabla_{\x}^\perp G(|\x - \x'|)-\nabla_{\x}^\perp G(|\x - \bar{\x}'|)\biggl\}  \theta(\x, t) \diff{\x'},
\]
where $\bar{\x}' = (x',-y')$ if $\x' = (x',y')$. In this setting, if a front is located at $y = h + \vp(x, t) > 0$, and
\[
\theta(x, y, t) = \begin{dcases}
\Theta/g_\alpha & \qquad \text{if $y > h + \vp(x, t)$},\\
0 & \qquad \text{if $0<y < h + \vp(x, t)$},
\end{dcases}
\]
then the regularized contour dynamics equation for a front in the half-plane coincides with \eqref{symm-gsqg}.

\subsubsection{Anti-symmetric reduction}

If $\Theta_+ = - \Theta_-$, then \eqref{reg-GSQG-nc} is compatible with solutions of the form
\[
\vp(x, t) = \vp(x, t), \qquad \psi(x, t) = - \vp(-x, t),
\]
and it reduces to a scalar equation for $\vp$ (see Figure \ref{fig:antisymm-red}). Writing $\Theta = \Theta_+ = -\Theta_-$ and making a Galilean transformation $x \mapsto x - v t$, we find that the equation becomes
\begin{align}
\label{anti-symm-gsqg}
\begin{split}
& \varphi_t(x, t)
 + \Theta \L_1 \varphi_x(x, t) - \Theta \L_2 \vp_x(-x, t)\\
&+ \Theta \int_\R \left[\varphi_x(x + \zeta, t) - \varphi_x(x, t)\right] \left\{G\left(\sqrt{\zeta^2 + [\vp(x + \zeta, t) - \vp(x, t)]^2}\right) - G(\zeta)\right\} \diff{\zeta}\\
&- \Theta \int_\R \left[\vp_x(-x - \zeta, t) - \vp_x(x, t)\right] \left\{G\left(\sqrt{\zeta^2 + [2h + \vp(-x - \zeta, t) + \varphi(x, t)]^2}\right) - G\left(\sqrt{\zeta^2 + (2h)^2}\right)\right\} \diff{\zeta} = 0.
\end{split}
\end{align}
\begin{figure}[h]
\centering
\includegraphics[width=0.6\textwidth]{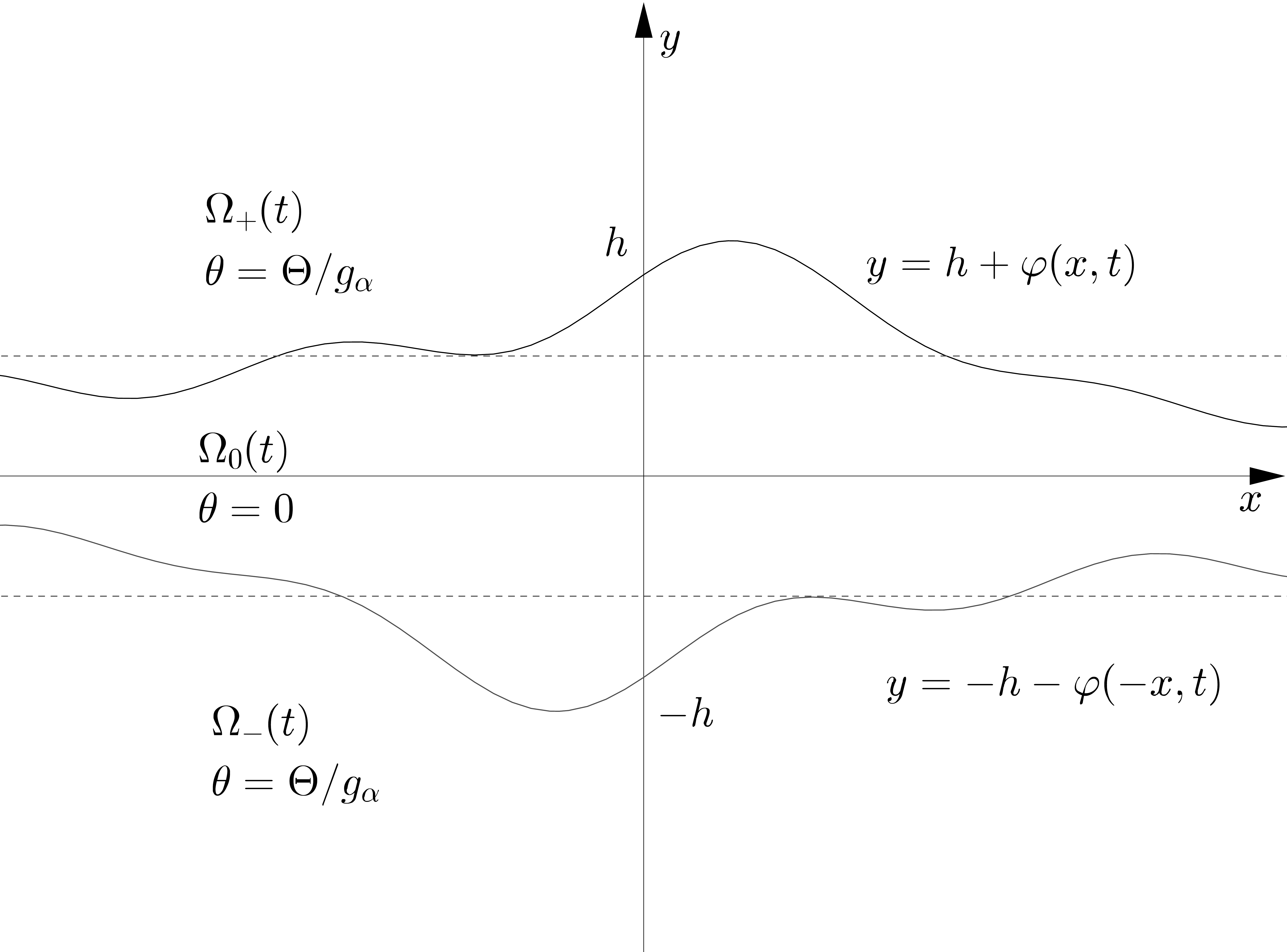}
\caption{Anti-symmetric reduction of GSQG system.}\label{fig:antisymm-red}
\end{figure}

\subsection{Expanded systems}

We consider fronts with small amplitude and small slope, i.e., $|\varphi|, |\psi| \ll h$ and $|\varphi_x|, |\psi_x| \ll 1$, and carry out a multilinear expansion of the nonlinearities in the systems derived in the previous subsection. We will use the expanded system in the local existence proof for $0<\alpha \le 1$, and the smallness condition (ii) in Theorems \ref{existence01}--\ref{existence1} is sufficient
to justify the expansion.

\noindent {\bf (1)} When $0 < \alpha < 2$, we have the Taylor expansion
\begin{align}
\label{alpha-taylor}
(1 + x)^{-1 + \alpha/2} = 1 + \sum_{n = 1}^\infty c_n x^n,
\qquad
c_n = \frac{\Gamma\left(\frac{\alpha}{2}\right)}{\Gamma(n + 1) \Gamma\left(\frac{\alpha}{2} - n\right)}.
\end{align}
Taking Fourier transforms and letting $\etab_n = (\eta_1, \eta_2, \dotsc, \eta_{2n + 1})$, we find that the first nonlinear term in the first equation of the systems \eqref{sqgsys}--\eqref{gsqgsys12} can be written as
\begin{align*}
& \int_\R \big[\varphi_x(x + \zeta, t) - \varphi_x(x, t)\big] \bigg\{\frac{1}{\big(\zeta^2 + (\varphi(x + \zeta) - \varphi(x, t))^2\big)^{1 - \frac{\alpha}{2}}} - \frac{1}{|\zeta|^{2 - \alpha}}\bigg\} \diff{\zeta}\\
&=- \sum_{n = 1}^\infty \frac{c_n}{2n + 1} \px \int_\R \bigg[\frac{\varphi(x, t) - \varphi(x + \zeta, t)}{\zeta}\bigg]^{2n + 1} |\zeta|^{\alpha - 1} \sgn{\zeta} \diff{\zeta}\\
&= - \sum_{n = 1}^\infty \frac{c_n}{2n + 1} \px \int_{\R^{2n + 1}} \Tb_n(\etab_n) \hat{\varphi}(\eta_1, t) \hat{\varphi}(\eta_2, t) \dotsm \hat{\varphi}(\eta_{2n + 1}, t) e^{i (\eta_1 + \eta_2 + \dotsb + \eta_{2n + 1}) x} \diff{\etab_n},
\end{align*}
where
\begin{align}
\label{defTn}
\Tb_n(\etab_n) = \int_{\R} \frac{\prod_{j = 1}^{2n + 1} \left(1 - e^{i \eta_j \zeta}\right)}{\zeta^{2n + 1}} |\zeta|^{\alpha - 1} \sgn{\zeta} \diff{\zeta}.
\end{align}
Replacing $\varphi$ by $\psi$ gives the first nonlinear term in the second equation of the systems \eqref{sqgsys}--\eqref{gsqgsys12}.

For the second nonlinear term of the first equation of these systems, we take Fourier transforms and use \eqref{BesselK} to get
\[
\int_\R \frac{\big(\psi(x + \zeta, t)\big)^m}{(\zeta^2 + (2h)^2)^{n + 1 - \frac{\alpha}{2}}} \diff{\zeta} = \begin{dcases}
B\bigg(\frac{1}{2}, n + \frac{1 - \alpha}{2}\bigg) (2h)^{\alpha - 2n - 1}& \quad \text{if}\ m = 0,\\
\frac{2 \sqrt{\pi}}{\Gamma\left(n + 1 - \frac{\alpha}{2}\right) (4h)^{n + \frac{1 - \alpha}{2}}} |\px|^{n + \frac{1 - \alpha}{2}} K_{n + \frac{1 - \alpha}{2}}(2 h |\px|) \big(\psi(x, t)\big)^m & \quad \text{if}\ m \geq 1.
\end{dcases}
\]
Then, using \eqref{alpha-taylor}, we get
\begin{align*}
& \int_\R \big[\psi_x(x + \zeta, t) - \varphi_x(x, t)\big] \bigg\{\frac{1}{(\zeta^2 + (- 2 h + \psi(x + \zeta, t) - \varphi(x, t))^2)^{1 - \frac{\alpha}{2}}} - \frac{1}{(\zeta^2 + (2 h)^2)^{1 - \frac{\alpha}{2}}}\bigg\} \diff{\zeta}\\
&= \sum_{n = 1}^\infty c_n \int_\R \frac{\psi_x(x + \zeta, t) - \varphi_x(x, t)}{(\zeta^2 + (2h)^2)^{n + 1 - \frac{\alpha}{2}}} \Big[- 4h \big(\psi(x + \zeta, t) - \varphi(x, t)\big) + \big(\psi(x + \zeta, t) - \varphi(x, t)\big)^2\Big]^n \diff{\zeta}\\
&= \sum_{n = 1}^\infty \sum_{\ell = 0}^n d_{n, \ell} \px \int_\R \frac{\big(\psi(x + \zeta, t) - \varphi(x, t)\big)^{2n - \ell + 1}}{(\zeta^2 + (2h)^2)^{n + 1 - \frac{\alpha}{2}}} \diff{\zeta}\\
&= \sum_{n = 1}^\infty \sum_{\ell = 0}^n \sum_{m = 0}^{2n - \ell + 1} d_{n, \ell, m} \px \bigg\{\big(\varphi(x, t)\big)^{2n - \ell + 1 - m} \int_\R \frac{\big(\psi(x + \zeta, t)\big)^m}{(\zeta^2 + (2h)^2)^{n + 1 - \frac{\alpha}{2}}} \diff{\zeta}\bigg\}\\
&= \sum_{n = 1}^\infty \sum_{\ell = 0}^n d_{n, \ell, 0, 1} \px \left\{\big(\varphi(x, t)\big)^{2n - \ell + 1}\right\}\\
& \qquad + \sum_{n = 1}^\infty \sum_{\ell = 0}^n \sum_{m = 1}^{2n - \ell + 1} d_{n, \ell, m, 1} \px\Big\{\big(\varphi(x, t)\big)^{2n - \ell + 1 - m} |\px|^{n + \frac{1 - \alpha}{2}} K_{n + \frac{1 - \alpha}{2}}(2 h |\px|) \big(\psi(x, t)\big)^m\Big\},
\end{align*}
where
\begin{align*}
\begin{split}
d_{n, \ell} &= \frac{\Gamma\left(\frac{\alpha}{2}\right) (-4 h)^\ell}{(2n - \ell + 1) \Gamma(\ell + 1) \Gamma(n + 1 - \ell) \Gamma\left(\frac{\alpha}{2} - n\right)},\\
d_{n, \ell, m} &=  \frac{(-1)^{2n + 1 - m} \Gamma\left(\frac{\alpha}{2}\right) \Gamma(2n + 2 - \ell) (4 h)^\ell}{(2n - \ell + 1) \Gamma(\ell + 1) \Gamma(n + 1 - \ell) \Gamma\left(\frac{\alpha}{2} - n\right) \Gamma(m + 1) \Gamma(2n + 2 - m - \ell)},\\
d_{n, \ell, m, 1} &= \begin{dcases}
d_{n, \ell, 0} \cdot \frac{2 \sqrt{\pi} \Gamma\left(n + \frac{1 - \alpha}{2}\right)}{\Gamma\left(n + 1 - \frac{\alpha}{2}\right) (4h)^{n + 1 - \frac{\alpha}{2}}} & \quad \text{if}\ m = 0,\\
d_{n, \ell, m} \cdot \frac{2 \sqrt{\pi}}{\Gamma\left(n + 1 - \frac{\alpha}{2}\right) (4h)^{n + \frac{1 - \alpha}{2}}} & \quad \text{if}\ m \geq 1.
\end{dcases}
\end{split}
\end{align*}
The computation for the second nonlinear term in the second equation of the systems \eqref{sqgsys}--\eqref{gsqgsys12} is similar. We only need to replace $\varphi$ by $\psi$, multiply $d_{n, \ell}$ and $d_{n, \ell, m}$ by $(-1)^\ell$, and replace $d_{n, \ell, m, 1}$ by $d_{n, \ell, m, 2}$ where
\[
d_{n, \ell, m, 2} = (-1)^\ell d_{n, \ell, m, 1}.
\]
All of these constants grow at most exponentially in $n$, so the series in the expanded equations converge when $\vp$, $\psi$ are sufficiently small.

\noindent {\bf (2)} When $\alpha = 2$, we use Taylor's expansion and Fourier transform to find that the first nonlinear term in the first equation of the system \eqref{eulersys} can be written as
\begin{align*}
& -\frac{1}{2 \pi} \int_\R \big[\varphi_x(x + \zeta, t) - \varphi_x(x, t)\big] \log\left(\sqrt{1 + \bigg[\frac{\varphi(x + \zeta, t) - \varphi(x, t)}{\zeta}\bigg]^2}\right)\\
= ~& - \frac{1}{2 \pi} \cdot \frac{1}{2} \sum_{n = 1}^\infty \px \int_\R \frac{(-1)^n}{n (2n + 1)} \bigg[\frac{\varphi(x, t) - \varphi(x + \zeta, t)}{\zeta}\bigg]^{2 n + 1} \zeta \diff{\zeta}\\
= ~& - \frac{1}{2 \pi} \sum_{n = 1}^\infty \tilde{c}_n \px \int_{\R^{2n + 1}} \Tb_n(\etab_n) \hat{\varphi}(\eta_1, t) \hat{\varphi}(\eta_2, t) \dotsm \hat{\varphi}(\eta_{2n + 1}, t) e^{i (\eta_1 + \eta_2 + \dotsb + \eta_{2n + 1}) x} \diff{\etab_n},
\end{align*}
where $\Tb_n$ also lies in the family \eqref{defTn} for $\alpha = 2$ and
\[
\tilde{c}_n = \frac{(-1)^n}{2 n (2n + 1)}.
\]
Substituting $\varphi$ by $\psi$ gives the first nonlinear term in the second equation of the system \eqref{eulersys}.

For the second nonlinear term in the first equation of this system, by Taylor expansion and Fourier transform, we have
\begin{align*}
& - \frac{1}{2 \pi} \int_\R \big[\psi_x(x + \zeta, t) - \varphi_x(x, t)\big] \log\left(\sqrt{1 - \frac{4h (\psi(x + \zeta, t) - \varphi(x, t))}{\zeta^2 + (2h)^2} + \frac{(\psi(x + \zeta, t) - \varphi(x, t))^2}{\zeta^2 + (2h)^2}}\right) \diff{\zeta}\\
&= \frac{1}{2 \pi} \sum_{n = 1}^\infty \frac{(-1)^n}{2 n} \int_\R \frac{\psi_x(x + \zeta, t) - \varphi_x(x, t)}{(\zeta^2 + (2h)^2)^n}\Big[-4h \big(\psi(x + \zeta, t) - \varphi(x, t)\big) + \big(\psi(x + \zeta, t) - \varphi(x, t)\big)^2\Big]^n \diff{\zeta}\\
&= \frac{1}{2 \pi} \sum_{n = 1}^\infty \sum_{\ell = 0}^n \tilde{d}_{n, \ell} \px \int_\R \frac{\big(\psi(x + \zeta, t) - \varphi(x, t)\big)^{2n - \ell + 1}}{(\zeta^2 + (2h)^2)^n} \diff{\zeta}\\
&= \frac{1}{2 \pi} \sum_{n = 1}^\infty \sum_{\ell = 0}^n \sum_{m = 0}^{2n - \ell + 1} \tilde{d}_{n, \ell, m} \px \bigg\{\big(\varphi(x, t)\big)^{2n - \ell + 1 - m} \int_\R \frac{\big(\psi(x + \zeta, t)\big)^m}{(\zeta^2 + (2 h)^2)^n} \diff{\zeta}\bigg\}\\
&= \frac{1}{2 \pi} \sum_{n = 0}^\infty \sum_{\ell = 0}^n \tilde{d}_{n, \ell, 0, 1} \px \Big\{\big(\varphi(x, t)\big)^{2n - \ell + 1}\Big\}\\
&+ \frac{1}{2\pi} \sum_{n = 1}^\infty \sum_{\ell = 0}^n \sum_{m = 1}^\infty \tilde{d}_{n, \ell, m, 1} \px \Big\{\big(\varphi(x, t)\big)^{2n - \ell + 1 - m} |\px|^{n - \frac{1}{2}} K_{n - \frac{1}{2}}(2 h |\px|) \big(\psi(x, t)\big)^m\Big\},
\end{align*}
where
\begin{align*}
\tilde{d}_{n, \ell} &= \frac{(-1)^n \Gamma(n) (-4h)^{\ell}}{2 (2n - \ell + 1) \Gamma(\ell + 1) \Gamma(n + 1 - \ell)},\\
\tilde{d}_{n, \ell, m} &= \frac{(-1)^{n + \ell - m} \Gamma(n) (4h)^{\ell} \Gamma(2n + 2 - \ell)}{2 (2n - \ell + 1) \Gamma(\ell + 1) \Gamma(n + 1 - \ell) \Gamma(m + 1) \Gamma(2n + 2 - m - \ell)},\\
\tilde{d}_{n, \ell, m, 1} &= \begin{dcases}
\tilde{d}_{n, \ell, 0} \cdot \frac{\sqrt{\pi} \Gamma\left(n - \frac{1}{2}\right) (2h)^{1 - 2n}}{\Gamma(n)} & \quad \text{if}\ m = 0,\\
\tilde{d}_{n, \ell, m} \cdot \frac{2 \sqrt{\pi}}{\Gamma(n) (4h)^{n - \frac{1}{2}}} & \quad \text{if}\ m \geq 1.
\end{dcases}
\end{align*}
The calculation for the second nonlinear term in the second equation of the system \eqref{eulersys} is similar; we only need to exchange $\varphi$ and $\psi$, multiply $\tilde{d}_{n, \ell}$ and $\tilde{d}_{n, \ell, m}$ by $(-1)^\ell$, and replace $\tilde{d}_{n, \ell, m, 1}$ by $\tilde{d}_{n, \ell, m, 2}$ where
\[
\tilde{d}_{n, \ell, m, 2} = (-1)^\ell \tilde{d}_{n, \ell, m, 1}.
\]

Finally, we summarize the expanded systems for $0 < \alpha \le 1$. Similar expansions apply for $1<\alpha \le 2$, but, since we do not need them, we will not write them out explicitly here.

When $0 < \alpha < 1$, the expanded two-front GSQG system is
\begin{align*}
& \varphi_t(x, t) + \frac{\Theta_+ - \Theta_-}{2} B\bigg(\frac{1}{2}, \frac{1 - \alpha}{2}\bigg) \frac{1}{(2h)^{1 - \alpha}} \varphi_x(x, t)\\
& \quad + 2 \Theta_+ \sin\bigg(\frac{\pi \alpha}{2}\bigg) \Gamma(\alpha - 1) |\px|^{1 - \alpha} \varphi_x(x, t) + \Theta_- \frac{2 \sqrt{\pi}}{\Gamma\left(1 - \frac{\alpha}{2}\right) (4h)^{\frac{1 - \alpha}{2}}} |\px|^{\frac{1 - \alpha}{2}} K_{\frac{1 - \alpha}{2}}(2 h |\px|) \psi_x(x, t)\\
& \quad - \Theta_+ \sum_{n = 1}^\infty \frac{c_n}{2n + 1} \px \int_{\R^{2n + 1}} \Tb_n(\etab_n) \hat{\varphi}(\eta_1, t) \hat{\varphi}(\eta_2, t) \dotsm \hat{\varphi}(\eta_{2n + 1}, t) e^{i (\eta_1 + \eta_2 + \dotsb + \eta_{2n + 1}) x} \diff{\etab_n}\\
& \quad + \Theta_- \sum_{n = 1}^\infty \sum_{\ell = 0}^n d_{n, \ell, 0, 1} \px \left\{\big(\varphi(x, t)\big)^{2n - \ell + 1}\right\}\\
& \quad + \Theta_- \sum_{n = 1}^\infty \sum_{\ell = 0}^n \sum_{m = 1}^{2n - \ell + 1} d_{n, \ell, m, 1} \px\Big\{\big(\varphi(x, t)\big)^{2n - \ell + 1 - m} |\px|^{n + \frac{1 - \alpha}{2}} K_{n + \frac{1 - \alpha}{2}}(2 h |\px|) \big(\psi(x, t)\big)^m\Big\} = 0,\\
& \psi_t(x, t) - \frac{\Theta_+ - \Theta_-}{2} B\bigg(\frac{1}{2}, \frac{1 - \alpha}{2}\bigg) \frac{1}{(2h)^{1 - \alpha}} \psi_x(x, t)\stepcounter{equation}\tag{\theequation}\label{gsqgsys01-expd}\\
& \quad + 2 \Theta_- \sin\bigg(\frac{\pi \alpha}{2}\bigg) \Gamma(\alpha - 1) |\px|^{1 - \alpha} \psi_x(x, t) + \Theta_+ \frac{2 \sqrt{\pi}}{\Gamma\left(1 - \frac{\alpha}{2}\right) (4h)^{\frac{1 - \alpha}{2}}} |\px|^{\frac{1 - \alpha}{2}} K_{\frac{1 - \alpha}{2}}(2 h |\px|) \varphi_x(x, t)\\
& \quad - \Theta_- \sum_{n = 1}^\infty \frac{c_n}{2n + 1} \px \int_{\R^{2n + 1}} \Tb_n(\etab_n) \hat{\psi}(\eta_1, t) \hat{\psi}(\eta_2, t) \dotsm \hat{\psi}(\eta_{2n + 1}, t) e^{i (\eta_1 + \eta_2 + \dotsb + \eta_{2n + 1}) x} \diff{\etab_n}\\
& \quad + \Theta_+ \sum_{n = 1}^\infty \sum_{\ell = 0}^n d_{n, \ell, 0, 2} \px \left\{\big(\psi(x, t)\big)^{2n - \ell + 1}\right\}\\
& \quad + \Theta_+ \sum_{n = 1}^\infty \sum_{\ell = 0}^n \sum_{m = 1}^{2n - \ell + 1} d_{n, \ell, m, 2} \px\Big\{\big(\psi(x, t)\big)^{2n - \ell + 1 - m} |\px|^{n + \frac{1 - \alpha}{2}} K_{n + \frac{1 - \alpha}{2}}(2 h |\px|) \big(\varphi(x, t)\big)^m\Big\} = 0.
\end{align*}
When $\alpha = 1$, the expanded regularized two-front SQG system is
\begin{align*}
& \varphi_t(x, t) - (\Theta_+ - \Theta_-) (\gamma + \log{h}) \varphi_x(x, t) - 2 \Theta_+ \log|\px| \varphi_x(x, t) + 2 \Theta_- K_0(2 h |\px|) \psi_x(x, t)\\
& \quad - \Theta_+ \sum_{n = 1}^\infty \frac{c_n}{2n + 1} \px \int_{\R^{2n + 1}} \Tb_n(\etab_n) \hat{\varphi}(\eta_1, t) \hat{\varphi}(\eta_2, t) \dotsm \hat{\varphi}(\eta_{2n + 1}, t) e^{i (\eta_1 + \eta_2 + \dotsb + \eta_{2n + 1}) x} \diff{\etab_n}\\
& \quad + \Theta_- \sum_{n = 1}^\infty \sum_{\ell = 0}^n d_{n, \ell, 0, 1} \px \left\{\big(\varphi(x, t)\big)^{2n - \ell + 1}\right\}\\
& \qquad + \Theta_- \sum_{n = 1}^\infty \sum_{\ell = 0}^n \sum_{m = 1}^{2n - \ell + 1} d_{n, \ell, m, 1} \px\Big\{\big(\varphi(x, t)\big)^{2n - \ell + 1 - m} |\px|^n K_n(2 h |\px|) \big(\psi(x, t)\big)^m\Big\} = 0,
\\
& \psi_t(x, t) + (\Theta_+ - \Theta_-) (\gamma + \log{h}) \psi_x(x, t) \stepcounter{equation}\tag{\theequation}\label{sqgsys-expd1} - 2 \Theta_- \log|\px| \psi_x(x, t) + 2 \Theta_+ K_0(2 h |\px|) \varphi_x(x, t)\\
& \quad - \Theta_- \sum_{n = 1}^\infty \frac{c_n}{2n + 1} \px \int_{\R^{2n + 1}} \Tb_n(\etab_n) \hat{\psi}(\eta_1, t) \hat{\psi}(\eta_2, t) \dotsm \hat{\psi}(\eta_{2n + 1}, t) e^{i (\eta_1 + \eta_2 + \dotsb + \eta_{2n + 1}) x} \diff{\etab_n}\\
& \quad + \Theta_+ \sum_{n = 1}^\infty \sum_{\ell = 0}^n d_{n, \ell, 0, 2} \px \left\{\big(\psi(x, t)\big)^{2n - \ell + 1}\right\}\\
& \qquad + \Theta_+ \sum_{n = 1}^\infty \sum_{\ell = 0}^n \sum_{m = 1}^{2n - \ell + 1} d_{n, \ell, m, 2} \px\Big\{\big(\psi(x, t)\big)^{2n - \ell + 1 - m} |\px|^n K_n(2 h |\px|) \big(\varphi(x, t)\big)^m\Big\} = 0.
\end{align*}

\section{Linearized stability}
\label{sec:lin_stab}

The GSQG equation \eqref{gsqg} has steady shear-flow solutions in which
\[
\theta = \bar\theta(y), \quad \vec{u} = (U(y),0),\qquad
\moddy^{\alpha}U = - \bar{\theta}_y,\quad\moddy = \left(-\partial_y^2\right)^{1/2}.
\]
Functions $\bar{\theta}$ that differ by a constant give the same solutions for $U$, and
distributional solutions for $U\in L^1_{\alpha}(\R) $ are unique up to an additive constant $C$ for $0<\alpha\le 1$, or an additive linear function $Ay + C$ for $1<\alpha \le 2$. We set these homogeneous solutions to zero for definiteness.

A particular example of a shear flow is the
unperturbed two-front solution given by $\varphi = \psi = 0$ and
\[
\bar{\theta}(y) = \begin{cases} \theta_+ & \text{if $y>h_+$,}
\\
\theta_0 & \text{if $h_- < y < h_+$,}
\\
\theta_- & \text{$y < h_-$.}
\end{cases}
\]
Then
\[
\moddy^{\alpha}U = - \frac{\Theta_+}{g_\alpha} \delta(y-h_+) -  \frac{\Theta_-}{g_\alpha}\delta(y-h_-),
\]
where the jumps $\Theta_\pm$ are defined in \eqref{defTheta}. The solution is
\begin{equation}
U(y) = \begin{dcases} - B\big(1 / 2, (1 - \alpha) / 2\big) \left[\Theta_+ |y - h_+|^{\alpha - 1} + \Theta_- |y - h_-|^{\alpha - 1}\right] & \text{if $\alpha \in (0, 1) \cup (1, 2)$},
\\
2 \Theta_+ \log|y - h_+| + 2 \Theta_- \log|y - h_-| & \text{if $\alpha = 1$},
\\
\frac{1}{2}\Theta_+ |y - h_+| + \frac{1}{2}\Theta_- |y - h_-| & \text{if $\alpha = 2$},
\end{dcases}
\label{shear_flow}
\end{equation}
where $B$ is the Beta-function \eqref{betafn}.

For $0<\alpha<2$, this shear-flow solution is the SQG or GSQG analog of the piecewise linear shear flow
that is often considered for the Euler equation with $\alpha =2$ (see Figure~\ref{fig:SQGshear}). The tangential velocity of the
shear flow on the fronts is finite if $1<\alpha \le 2$, but diverges to infinity if
$0<\alpha \le 1$. In addition, $U(y) \to 0$ as $|y|\to \infty$ if $0<\alpha < 1$ or $\Theta_+ + \Theta_- = 0$; otherwise
$|U(y)| \to \infty$ as $|y|\to \infty$.

\begin{figure}[h]
\centering
\begin{subfigure}[]{0.45\textwidth}
\includegraphics[width=\textwidth]{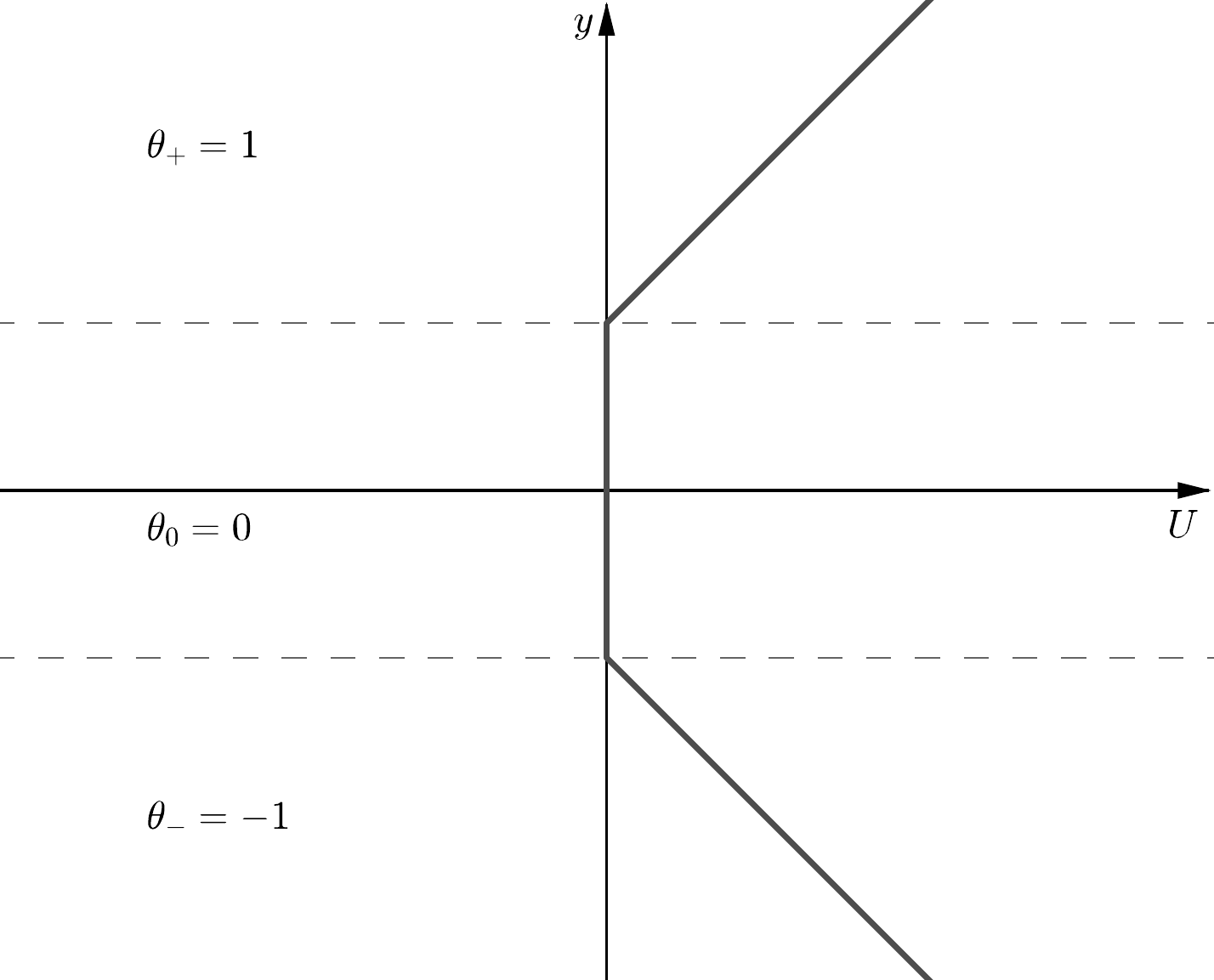}
\caption{Symmetric Euler shear flow.}
\end{subfigure}~
\begin{subfigure}[]{0.45\textwidth}
\includegraphics[width=\textwidth]{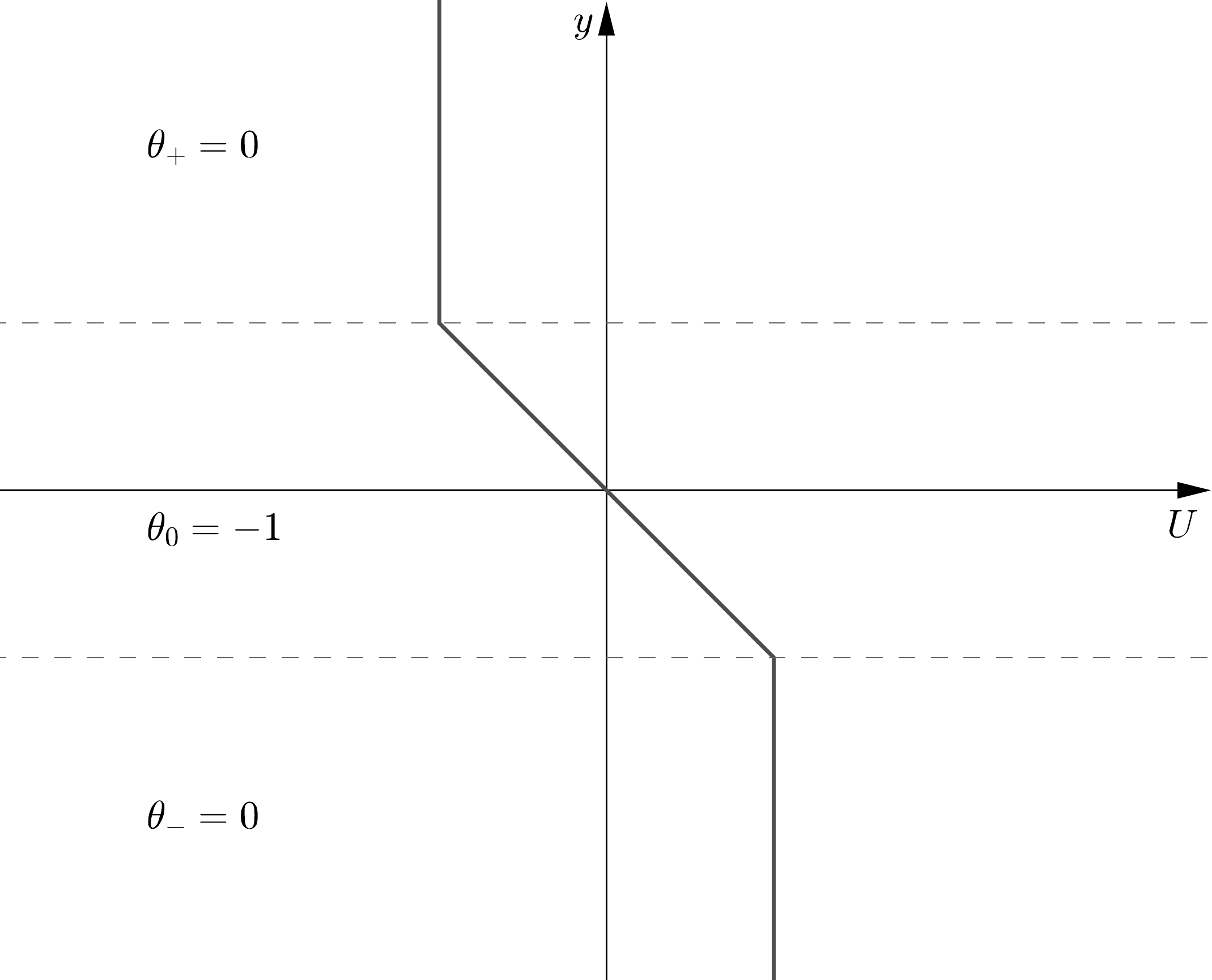}
\caption{Anti-symmetric Euler shear flow.}
\end{subfigure}\\
\begin{subfigure}[]{0.45\textwidth}
\includegraphics[width=\textwidth]{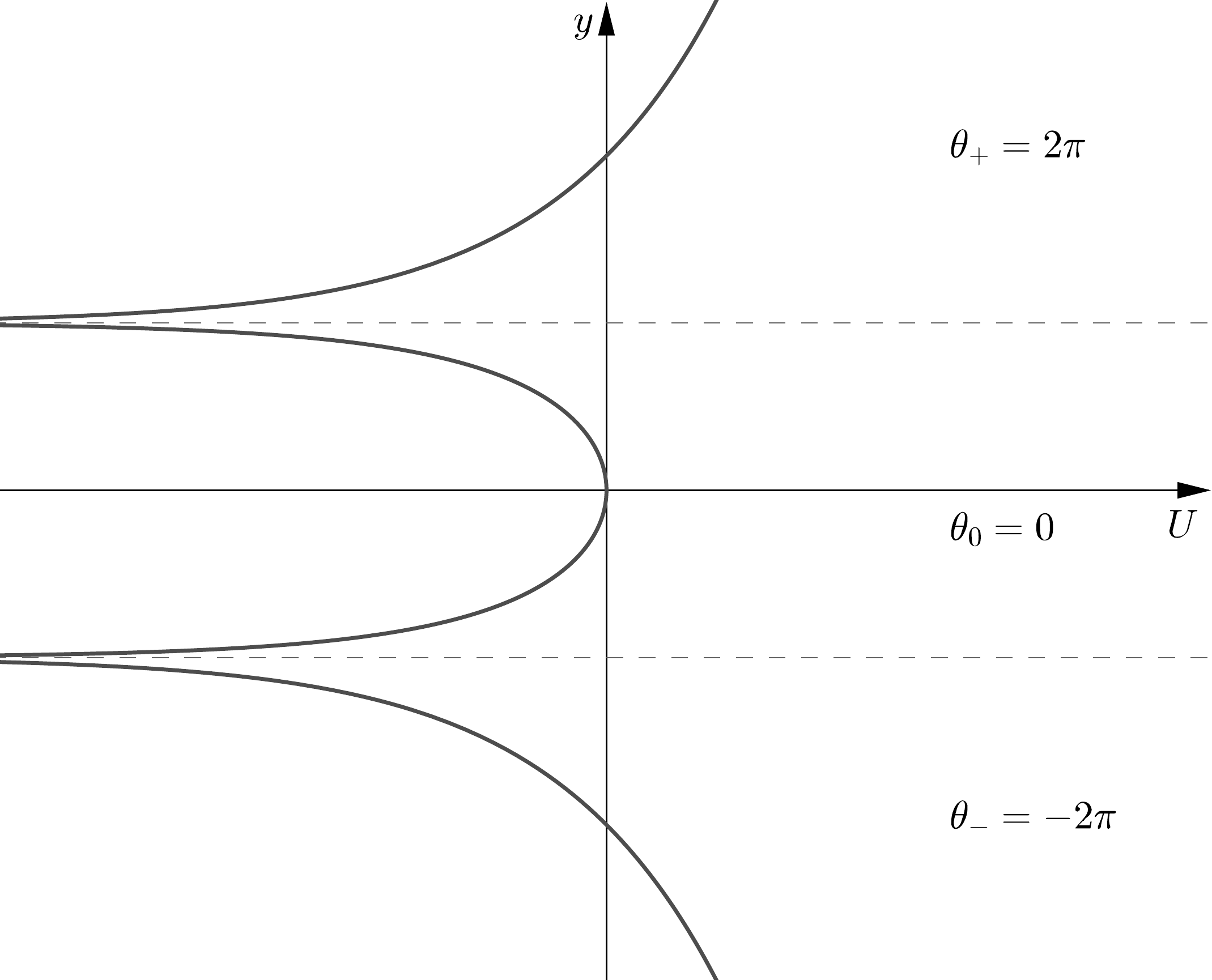}
\caption{Symmetric SQG shear flow.}
\end{subfigure}~
\begin{subfigure}[]{0.45\textwidth}
\includegraphics[width=\textwidth]{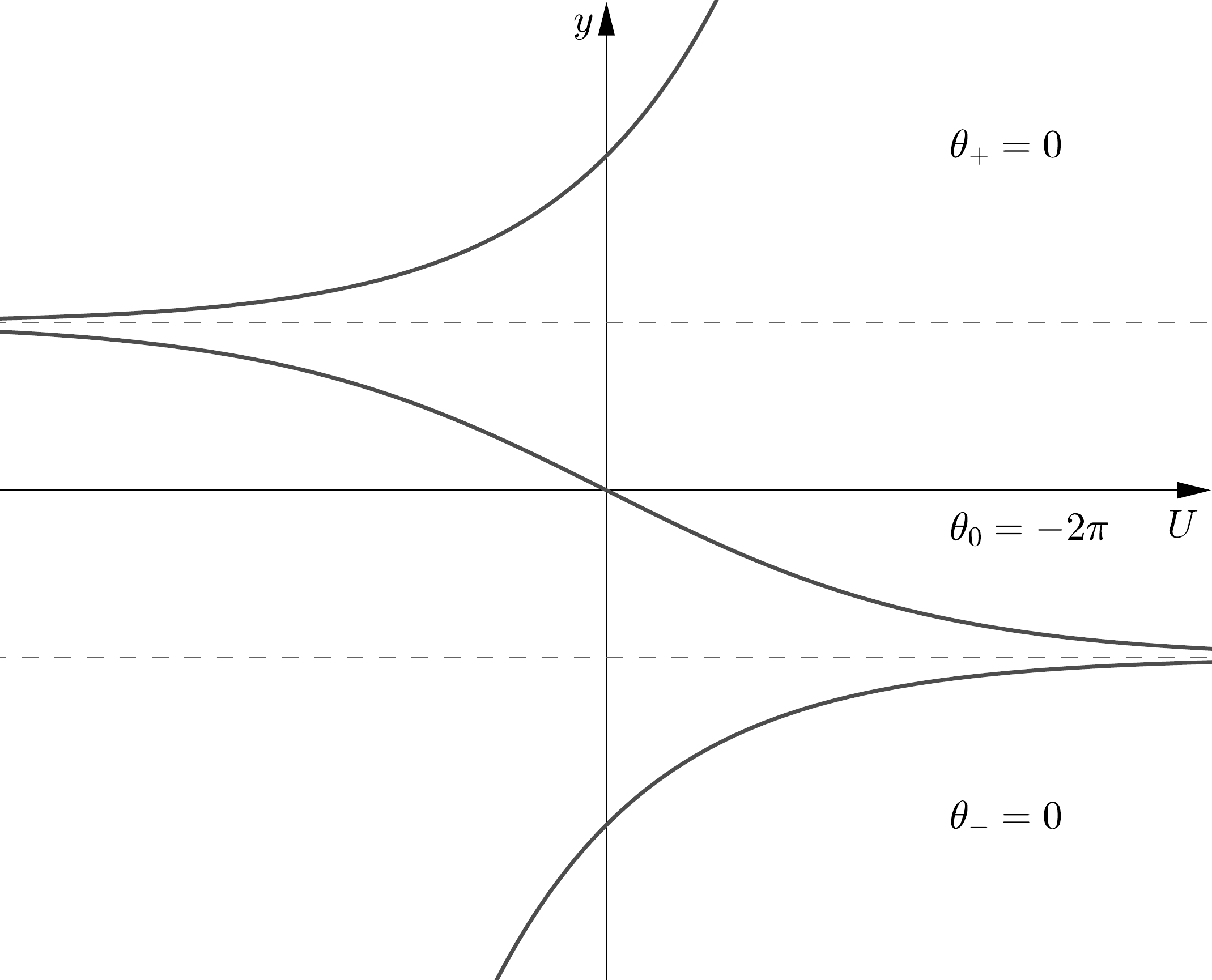}
\caption{Anti-symmetric SQG shear flow.}
\end{subfigure}
\caption{Euler and SQG shear flows. The symmetric flows have scaled jumps $\Theta_+ = \Theta_-=1$, and the anti-symmetric flows
have  $\Theta_+ = -\Theta_-=1$.}
\label{fig:SQGshear}
\end{figure}

There do not appear to be many studies of the stability of SQG and GSQG shear flows $\vec{u} = (U(y),0)$. However, as noted by Friedlander and Shvydkoy \cite{friedlander} for SQG shear flows, the classical necessary conditions for the linearized instability of Euler shear flows --- the Rayleigh and Fj\o rtoft criteria --- carry over directly to sufficiently smooth flows: If there are linear modes with exponential growth in time, then $\moddy^\alpha U$ must change sign,
and for any constant $U_*$, the function
$(U-U_*)\cdot \moddy^\alpha U$ must be strictly positive for some values of $y$.
Conversely, Friedlander and Shvydkoy \cite{friedlander} prove that the SQG shear flow with $U(y) = \sin y$ is linearly unstable.

To study the stability of the two-front GSQG shear flows \eqref{shear_flow} by contour dynamics, we linearize the system \eqref{reg-GSQG-nc}
about $\vp=\psi=0$ to get
\begin{align}
\label{linsys}
\begin{split}
& \vp_t + v \vp_x + \Theta_+ \L_1 \vp_x + \Theta_- \L_2 \psi_x = 0,\qquad
\psi_t - v \psi_x + \Theta_- \L_1 \psi_x + \Theta_+ \L_2 \vp_x  = 0.
\end{split}
\end{align}
Taking the Fourier transform of \eqref{linsys} with respect to $x$, we get the system
\begin{equation}
\partial_t \left(\begin{array}{ll}
\hat\vp\\
\hat\psi
\end{array}\right)=\left(
\begin{array}{cc}
-i\xi(v +\Theta_+b_1(\xi)) & -i\xi \Theta_-b_2(\xi)\\
-i\xi \Theta_+ b_2(\xi) & -i\xi(- v +\Theta_-b_1(\xi))
\end{array}
\right)\left(\begin{array}{ll}
\hat\vp\\
\hat\psi
\end{array}\right),
\label{matrix_stab}
\end{equation}
where the symbols $b_1$, $b_2$ of $\L_1$, $\L_2$ are defined in \eqref{L1Symb}--\eqref{L2Symb}.
The characteristic polynomial (in $\mu$) of the coefficient matrix in \eqref{matrix_stab} is
\begin{align*}
\mu^2+i \xi b_1(\xi) [\Theta_+ + \Theta_-] \mu - \xi^2\left[\Theta_-  b_1(\xi) - v\right]
\left[\Theta_+  b_1(\xi) +  v\right]+ \Theta_+ \Theta_- \xi^2 b_2^2(\xi),
\end{align*}
with roots
\begin{equation}
\mu_\pm(\xi) = \frac{1}{2} \left\{- i \xi b_1(\xi) \left(\Theta_+ + \Theta_- \right) \pm \sqrt{\triangle(\xi)}\right\},
\label{growthrate}
\end{equation}
where the discriminant $\triangle$ is given by
\begin{align*}
\begin{split}
\triangle(\xi)
&= -\biggl[|\xi| b_1(\xi) (\Theta_+ - \Theta_-) + 2v|\xi|\biggr]^2 - 4 \Theta_+ \Theta_- \xi^2 b_2^2(\xi).
\end{split}
\end{align*}
If $\Theta_+ \Theta_- > 0$, then $\triangle(\xi)\leq 0$ for all $\xi\in \R$, so the roots of the characteristic polynomial are imaginary and the
GSQG shear flow is linearly stable. In particular, the symmetric Euler and SQG shear flows shown in Figure~\ref{fig:SQGshear}(a) and Figure~\ref{fig:SQGshear}(c) are linearly stable.

On the other hand, if $\triangle(\xi)>0$ for some $\xi\in \R$, then there is a mode with positive growth rate, and the shear flow is linearly unstable.
For the anti-symmetric Euler shear flow ($\alpha = 2$)  shown in Figure \ref{fig:SQGshear}(b), with
$|\xi| b_1(\xi) = 1/2$, $|\xi|b_2(\xi) = e^{-2h|\xi|}/2$, and $v = -h$ from \eqref{defv}, we get that
\[
\triangle(\xi) = e^{- 4 h |\xi|} - \left(1 - 2 h |\xi|\right)^2,
\]
in agreement with the standard result obtained directly from the Euler equation \cite{vallis}, and there are unstable modes
for $0 < h|\xi| \lesssim 0.63923$.

For the anti-symmetric SQG shear flow ($\alpha = 1$) shown in Figure \ref{fig:SQGshear}(d), we find that
\begin{align}
\label{sqg_discriminant}
\triangle(\xi) = 16 \xi^2 K_0^2(2h |\xi|) - 16 \xi^2 \left[\log(h |\xi|) + \gamma\right]^2,
\end{align}
where $\gamma$ is the Euler-Mascheroni constant.
A numerical plot of the corresponding growth rates and wave speeds is shown in Figure~\ref{fig:sqg_instability}. These plots are qualitatively similar to the ones for the Euler equation.
In both cases, the instability results from an interaction between negative and positive energy waves on the fronts that leads to an exponential growth in time when the horizontal wavelengths of the waves are sufficiently large in comparison with the distance between the fronts.

\begin{figure}[h]
\centering
\begin{subfigure}[]{0.45\textwidth}
\includegraphics[width=\textwidth]{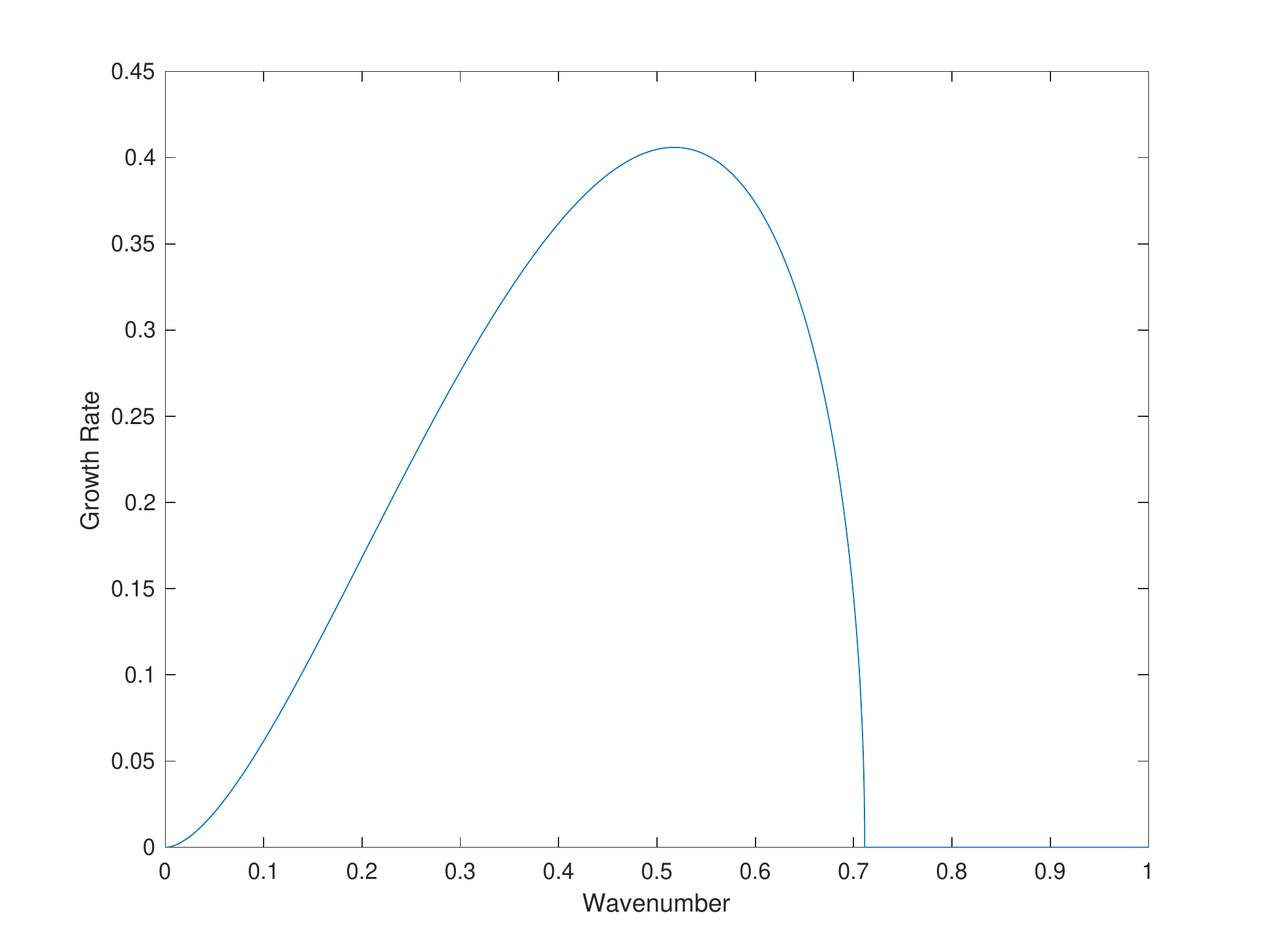}
\end{subfigure}~
\begin{subfigure}[]{0.45\textwidth}
\includegraphics[width=\textwidth]{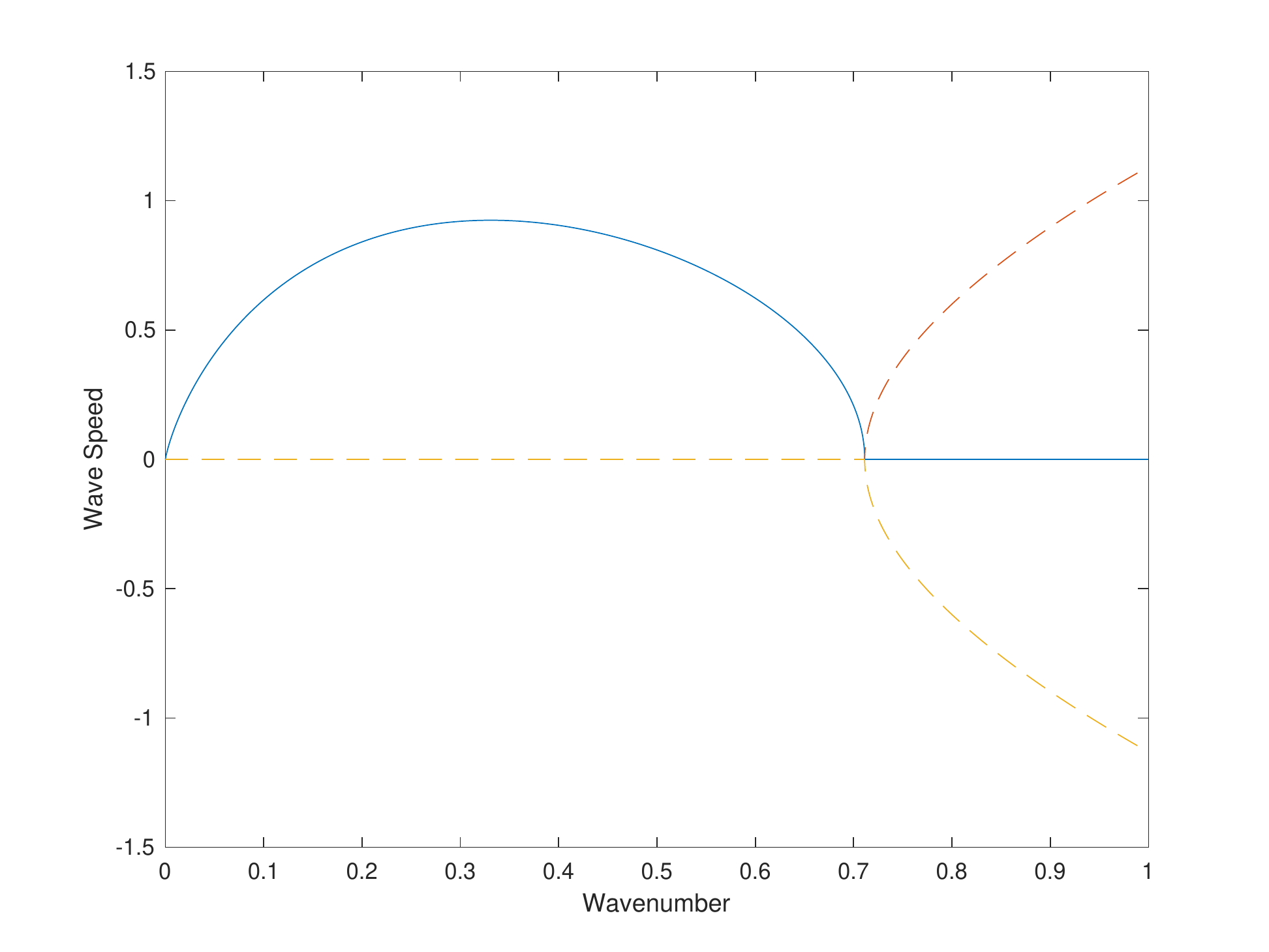}
\end{subfigure}
\caption{Left: Growth rate $\Im \mu$ for anti-symmetric SQG flow in Figure~\ref{fig:SQGshear} with
$\Theta_+ = 1$, $\Theta_- = - 1$, and $h=1$, calculated from \eqref{growthrate} and \eqref{sqg_discriminant}. Right: Real (dashed) and imaginary (solid) wave speeds $c = \mu/\xi$. The flow is unstable for $0 < h|\xi| \lesssim 0.71129$, with the maximum growth rate occurring at $h|\xi| \approx 0.51756$.}
\label{fig:sqg_instability}
\end{figure}

\section{A priori estimates for the two-front GSQG ($0 < \alpha < 1$) systems}\label{sec:lwp01}

\subsection{Para-differential reduction}

Throughout this section and the following ones, we fix $h > 0$ in \eqref{defh} and use $C(n, s)$ to denote a generic constant, which might change from line to line in the proof, depending only on $n$, $s$, and $h$ that grows no faster than exponentially in $n$.

First, we state an estimate which shows that we can distribute derivatives on each factor of $\vp$ in the multilinear terms in the front
equations. This estimate is not sharp, but it is sufficient for our needs below.
\begin{lemma}
\label{Tn-roughbdd01}
Let $\Tb_n$ be defined by \eqref{defTn} for $n \in \N$. Then
\[
|\Tb_n(\etab_n)| \le \frac{2^{1 + \alpha}}{\alpha} \prod_{j = 1}^{2n + 1} |\eta_j| + \frac{2^{1 + \alpha}}{3 - \alpha}
\qquad\text{for all}\ \etab_n \in \R^{2n + 1}.
\]
\end{lemma}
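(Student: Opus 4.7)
The plan is to split the integration region in $\zeta$ at the radius $|\zeta| = 2$ and use two complementary pointwise bounds on each factor of the numerator, namely $|1 - e^{i\eta_j \zeta}| \le |\eta_j||\zeta|$ on the inner piece and $|1 - e^{i\eta_j \zeta}| \le 2$ on the outer piece. The particular threshold $R = 2$ is chosen so that the powers of $2$ coming from the outer estimate collapse to an $n$-independent constant.

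First I would simplify the integrand of \eqref{defTn}. Since $2n+1$ is odd, $\zeta^{2n+1} = |\zeta|^{2n+1}\sgn\zeta$, so $|\zeta|^{\alpha-1}\sgn\zeta / \zeta^{2n+1} = |\zeta|^{\alpha-2n-2}$, and hence
\[
\Tb_n(\etab_n) = \int_\R \frac{\prod_{j=1}^{2n+1}\left(1 - e^{i\eta_j\zeta}\right)}{|\zeta|^{2n+2-\alpha}}\, \diff\zeta.
\]
On $\{|\zeta| \le 2\}$, applying $|1 - e^{i\eta_j\zeta}| \le |\eta_j||\zeta|$ to every factor and integrating $|\zeta|^{\alpha-1}$ produces exactly $\bigl(\prod_{j=1}^{2n+1}|\eta_j|\bigr)\cdot (2^{1+\alpha}/\alpha)$, which is the first term of the stated bound.

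On $\{|\zeta| \ge 2\}$, applying $|1 - e^{i\eta_j\zeta}| \le 2$ to all $2n+1$ factors gives
\[
\left|\int_{|\zeta|\ge 2}\frac{\prod_j(1-e^{i\eta_j\zeta})}{|\zeta|^{2n+2-\alpha}}\, \diff\zeta\right| \le 2^{2n+1}\cdot 2\int_2^\infty \zeta^{\alpha-2n-2}\, \diff\zeta = \frac{2^{2n+2}\cdot 2^{\alpha - 2n - 1}}{2n+1-\alpha} = \frac{2^{1+\alpha}}{2n+1-\alpha}.
\]
Since $n \ge 1$ gives $2n+1-\alpha \ge 3-\alpha$, this outer piece is dominated by $2^{1+\alpha}/(3-\alpha)$, and adding the two pieces yields the claim.

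The only genuine decision is the split radius; everything else is a direct estimate. The key observation that makes the argument clean is that at $R = 2$ the factor $2^{2n+1}$ produced by bounding $|1 - e^{i\eta_j\zeta}|$ by $2$ on each of the $2n+1$ outer factors is exactly compensated by the decay $R^{\alpha - 2n - 1} = 2^{\alpha - 2n - 1}$ coming from the integral, so the resulting constant is independent of $n$. No subtle oscillation cancellation is needed, which is why the bound is, as the authors note, not sharp but sufficient.
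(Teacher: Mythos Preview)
Your proof is correct and follows exactly the same approach as the paper: split the integral at $|\zeta|=2$, use $|1-e^{i\eta_j\zeta}|\le |\eta_j||\zeta|$ on the inner piece and $|1-e^{i\eta_j\zeta}|\le 2$ on the outer piece, and then observe $2n+1-\alpha\ge 3-\alpha$ for $n\ge 1$. Your additional remark explaining why the cutoff $R=2$ makes the constant $n$-independent is a nice clarification not spelled out in the original.
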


\begin{proof}
Splitting up the integral and using a Taylor expansion, we have
\begin{align*}
|\Tb_n(\etab_n)| &\le \int_\R \frac{\prod_{j = 1}^{2n + 1} |1 - e^{ i \eta_j \zeta}|}{|\zeta|^{2n + 2 - \alpha}} \diff{\zeta}
\le  \int_{|\zeta| < 2}\prod_{j = 1}^{2n+1} |\eta_j | \frac{\diff{\zeta}}{|\zeta|^{1 - \alpha}}
+  \int_{|\zeta| > 2}\frac{2^{2n + 1}}{|\zeta|^{2n + 2 - \alpha}} \diff{\zeta}
\le \frac{2^{1 + \alpha}}{\alpha} \prod_{j = 1}^{2n + 1} |\eta_j| + \frac{2^{1 + \alpha}}{3 - \alpha}.
\end{align*}
\end{proof}

We prove the following propositions which allows us to write \eqref{gsqgsys01-expd} in a form that is suitable for constructing weighted energies without a loss of derivatives.
\begin{proposition}
\label{varphinonlindecomp01}
For $0 < \alpha < 1$, suppose that $\varphi(\cdot, t), \psi(\cdot, t) \in H^s(\R)$ with $s \geq 5$ and $\|\varphi\|_{W^{4, \infty}} + \|\psi\|_{W^{4, \infty}}$ is sufficiently small. Then
\begin{align*}
& -  \sum_{n = 1}^\infty \frac{c_n}{2n + 1} \px \int_{\R^{2n + 1}} \Tb_n(\etab_n) \hat{\varphi}(\eta_1, t) \hat{\varphi}(\eta_2, t) \dotsm \hat{\varphi}(\eta_{2n + 1}, t) e^{i (\eta_1 + \eta_2 + \dotsb + \eta_{2n + 1}) x} \diff{\etab_n}\\
&= \px |\px|^{1 - \alpha} \Big\{T_{B^{1 - \alpha}[\varphi]} \varphi(x, t) + D^{-1} T_{B^{- \alpha}[\varphi]} \varphi(x, t)\Big\} + \px T_{B^0[\varphi]} \varphi(x, t) + \Rc_1,
\end{align*}
where the symbols $B^{1 - \alpha}[\varphi]$, $B^0[\varphi]$, and $B^{-\alpha}[\varphi]$ are defined by
\begin{align}
\label{defB01}
\begin{split}
B^{1 - \alpha}[\varphi](\cdot, \xi) &=  \sum_{n = 1}^\infty B^{1 - \alpha}_n[\varphi](\cdot, \xi), \quad B^0[\varphi](\cdot, \xi) =  \sum_{n = 1}^\infty B^0_n[\varphi](\cdot, \xi),
\\
B^{-\alpha}[\varphi](\cdot, \xi) &=  \sum_{n = 1}^\infty B^{-\alpha}_n[\varphi](\cdot, \xi) = \frac{1 - \alpha}{2 i} \px B^{1 - \alpha}[\vp](\cdot, \xi),
\end{split}
\end{align}
with
\begin{align*}
B^{1 - \alpha}_n[\varphi](\cdot, \xi) &= -2 c_n \Gamma(\alpha - 1) \sin\bigg(\frac{\pi \alpha}{2}\bigg) \F_{\zeta}^{-1}\Bigg[\int_{\R^{2n}} \delta\bigg(\zeta - \sum_{j = 1}^{2n} \eta_j\bigg) \prod_{j = 1}^{2n} \bigg(i \eta_j \hat{\varphi}(\eta_j) \chi\Big(\frac{(2n + 1) \eta_j}{\xi}\Big)\bigg) \diff{\hat{\etab}_n}\Bigg],\\
B^0_n[\varphi](\cdot, \xi) &= 2 c_n \Gamma(\alpha - 1) \sin\bigg(\frac{\pi \alpha}{2}\bigg) \F_{\zeta}^{-1}\Bigg[\int_{\R^{2n}} \delta\bigg(\zeta - \sum_{j = 1}^{2n} \eta_j\bigg) \prod_{j = 1}^{2n} \bigg(i \eta_j \hat{\varphi}(\eta_j) \chi\Big(\frac{(2n + 1) \eta_j}{\xi}\Big)\bigg)\\*
& \hspace{3in} \cdot \int_{[0, 1]^{2n}} \bigg|\sum_{j = 1}^{2n} \eta_j s_j\bigg|^{1 - \alpha} \diff{\hat{\s}_n} \diff{\hat{\etab}_n}\Bigg],\\
B^{-\alpha}_n[\varphi](\cdot, \xi) &= -2 c_n (1 - \alpha) \Gamma(\alpha - 1) \sin\bigg(\frac{\pi \alpha}{2}\bigg) \F_{\zeta}^{-1}\Bigg[\int_{\R^{2n}} \delta\bigg(\zeta - \sum_{j = 1}^{2n} \eta_j\bigg) \prod_{j = 1}^{2n} \bigg(i \eta_j \hat{\varphi}(\eta_j) \chi\Big(\frac{(2n + 1) \eta_j}{\xi}\Big)\bigg)\\
& \hspace{3in} \cdot \int_{[0, 1]^{2n}} \sum_{j = 1}^{2n} \eta_j s_j \diff{\hat{\s}_n} \diff{\hat{\etab}_n}\Bigg]\\
& = \frac{1 - \alpha}{2 i} \px B_n^{1 - \alpha}[\vp](\cdot, \xi).
\end{align*}
Here $\chi$ is the cutoff function in \eqref{weyldef}, $\hat{\etab}_n = (\eta_1, \eta_2, \dotsc, \eta_{2n})$ and $\hat{\s}_n = (s_1, s_2, \dotsc, s_{2n})$. The operators $T_{B^{1 - \alpha}[\varphi]}$ and $T_{B^0[\varphi]}$ are self-adjoint and they satisfy the estimates
\begin{align}
\label{B-est01}
\begin{split}
\|B^{1 - \alpha}[\vp]\|_{\M_{(3, 5)}} \lesssim \sum_{n = 1}^\infty C(n, s) |c_n| \|\vp\|_{W^{4, \infty}}^{2n},\\
\|B^0[\vp]\|_{\M_{(3, 5)}} \lesssim \sum_{n = 1}^\infty C(n, s) |c_n| \|\vp\|_{W^{4, \infty}}^{2n}.
\end{split}
\end{align}
while the remainder term $\Rc_1$ satisfies
\begin{align}
\label{R1est01}
\|\Rc_1\|_{H^s} \lesssim \|\vp\|_{H^s} \sum_{n = 1}^\infty C(n, s) |c_n| \|\vp\|_{W^{4, \infty}}^{2n}.
\end{align}
Similar conclusions hold for $\psi$.
\end{proposition}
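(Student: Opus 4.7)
The plan is to obtain a closed form for $\Tb_n$ by direct Fourier computation, then localize to the para-product regime using the cutoff $\chi$, and finally Taylor-expand around the resulting high-frequency factor to read off the three symbols. For the closed form, apply the identity $1-e^{i\eta_j\zeta} = -i\eta_j\zeta\int_0^1 e^{is_j\eta_j\zeta}\diff s_j$ to factor out $\zeta^{2n+1}$ and cancel the denominator in \eqref{defTn}, leaving the Fourier transform of the homogeneous distribution $|\zeta|^{\alpha-1}\sgn\zeta$ (a constant multiple of $|\omega|^{-\alpha}\sgn\omega$). The $s_{2n+1}$-integral then evaluates exactly by antiderivative, $\int_0^1|s\eta+V|^{-\alpha}\sgn(s\eta+V)\diff s = [|\eta+V|^{1-\alpha}-|V|^{1-\alpha}]/[(1-\alpha)\eta]$, cancelling one $\eta_{2n+1}$ factor and producing
\[
\Tb_n(\etab_n) \propto \prod_{j=1}^{2n}\eta_j\int_{[0,1]^{2n}}\bigl[|\eta_{2n+1}+V|^{1-\alpha}-|V|^{1-\alpha}\bigr]\diff\hat\s_n, \qquad V=\sum_{j=1}^{2n}s_j\eta_j,
\]
with proportionality constant $-2\Gamma(\alpha-1)\sin(\pi\alpha/2)$ matching the prefactors in \eqref{defB01}.

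By the full $(2n+1)$-fold symmetry of $\Tb_n$ and the observation that $|\eta_{j_\ast}|\geq|\xi|/(2n+1)$ for some $j_\ast$, insert the cutoffs $\chi((2n+1)\eta_j/\xi)$ for $j=1,\ldots,2n$ to restrict to the regime where $\eta_{2n+1}$ is the designated high-frequency factor; the symmetrization yields a factor $2n+1$ that cancels the $1/(2n+1)$ in the series. On the cutoff support, $|V|\leq|\xi|/10\leq|\eta_{2n+1}|/9$, so one may Taylor-expand
\[
|\eta_{2n+1}+V|^{1-\alpha} = |\eta_{2n+1}|^{1-\alpha} + (1-\alpha)|\eta_{2n+1}|^{-\alpha}\sgn(\eta_{2n+1})\cdot V + O\bigl(V^2/|\eta_{2n+1}|^{1+\alpha}\bigr).
\]
The leading $|\eta_{2n+1}|^{1-\alpha}$ combined with the overall $i\xi$ from $\px$ produces the symbol $i\xi|\xi|^{1-\alpha}$ (since $\eta_{2n+1}\approx\xi$ on the support) acting on the Weyl para-product $T_{B^{1-\alpha}[\vp]}\vp$; the argument $(2n+1)\eta_j/\xi$ in \eqref{defB01} arises exactly from the Weyl midpoint convention $(\xi+\eta_{2n+1})/2\approx\eta_{2n+1}$. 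The first-order correction carries the extra factor $|\eta_{2n+1}|^{-\alpha}\sgn(\eta_{2n+1}) = |\eta_{2n+1}|^{1-\alpha}\cdot\eta_{2n+1}^{-1}$ relative to the leading piece, precisely the symbol of an additional $D^{-1}$; hence this correction realizes as $\px|\px|^{1-\alpha}D^{-1}T_{B^{-\alpha}[\vp]}\vp$. The identity $B^{-\alpha}_n = (1-\alpha)/(2i)\,\px B^{1-\alpha}_n$ in \eqref{defB01} then follows from $\int_{[0,1]^{2n}}\sum_j s_j\eta_j\diff\hat\s_n = (1/2)\sum_j\eta_j = (1/2)\zeta$ (using the delta function), combined with $\F_\zeta^{-1}[\zeta\cdot G] = (1/i)\px\F_\zeta^{-1}[G]$. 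Finally, the $-|V|^{1-\alpha}$ term depends only on the low-frequency variables and contributes $\px T_{B^0[\vp]}\vp$ with $B^0_n$ as in \eqref{defB01}.

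The symbol bounds \eqref{B-est01} follow by Leibniz applied to the $2n$ factors $i\eta_j\hat\vp(\eta_j)\chi((2n+1)\eta_j/\xi)$: each is a low-pass-filtered derivative of $\vp$ whose $L^\infty$-norm (and those of its $x$- and $\xi$-derivatives up to the required orders) is bounded by $\|\vp\|_{W^{4,\infty}}$, with polynomial-in-$n$ losses from differentiating the $\chi$-cutoffs absorbed into the exponentially-growing constant $C(n,s)$. Self-adjointness of $T_{B^{1-\alpha}[\vp]}$ and $T_{B^0[\vp]}$ is immediate since the symbols are real-valued in $(x,\xi)$. The remainder $\Rc_1$ comprises (a) the complement of the $\chi$-supports, estimated via Lemma~\ref{Tn-roughbdd01} and standard multilinear para-product bounds; (b) the quadratic Taylor remainder $O(V^2/|\eta_{2n+1}|^{1+\alpha})$, which trades one derivative on $\vp$ for an extra power of $|V|$; and (c) a Weyl-versus-straight symbol correction, arising from replacing $|\eta_{2n+1}|^{1-\alpha}$ by $|(\xi+\eta_{2n+1})/2|^{1-\alpha}$ in the midpoint symbol, controlled by Lemmas~\ref{weyl-comm} and \ref{lem-DsT}. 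Each contribution has strictly lower order than the leading $|\xi|^{2-\alpha}$ and either gains one extra smoothness or contains one extra factor of $\|\vp\|_{W^{4,\infty}}$, yielding \eqref{R1est01}; convergence of the $n$-series is guaranteed by the smallness hypothesis in Theorem~\ref{existence01}(ii). The main obstacle is the delicate bookkeeping in identifying the three symbols: one must carefully track signs, the Weyl midpoint convention, and the interchange of $s$- and frequency integrals so that the exact formulas \eqref{defB01} emerge and every residual term is absorbed into $\Rc_1$ with the required $H^s$-bound.
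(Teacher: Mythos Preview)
Your proposal is correct and follows essentially the same approach as the paper: compute $\Tb_n$ in closed form via the identity $1-e^{i\eta_j\zeta}=-i\eta_j\zeta\int_0^1 e^{is_j\eta_j\zeta}\diff s_j$ and the Fourier transform of $|\zeta|^{\alpha-1}\sgn\zeta$, use the $(2n+1)$-fold symmetry to designate $\eta_{2n+1}$ as the high frequency, insert the cutoffs, Taylor-expand $|\eta_{2n+1}+V|^{1-\alpha}$ to split off $\Tb_n^{1-\alpha}$, $\Tb_n^{-\alpha}$, $\Tb_n^0$, and a lower-order remainder, and then reconcile the resulting expressions with the Weyl para-product convention. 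The paper organizes the argument slightly differently---it first restricts to the region $\{|\eta_j|\le|\eta_{2n+1}|\text{ for all }j\}$ and inserts cutoffs $\chi((2n+1)\eta_j/\eta_{2n+1})$ with $\eta_{2n+1}$ (not $\xi$) in the denominator, then explicitly writes out the difference between this and the Weyl para-product $T_{B_n^{1-\alpha}}(|\px|^{1-\alpha}\vp)$ and shows the integrand of that difference is supported where some $|\eta_{j_1}|\gtrsim|\eta_{2n+1}|/(2n+1)$---but this is exactly what you subsume under your item~(c), so the substance is the same.
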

\begin{proof}
In this proof, we suppress the time variable for simplicity.

By symmetry, we may assume that $|\eta_{2n + 1}|$ is the largest frequency. Then
\begin{align}
\label{intprod01}
\begin{split}
& - \frac{c_n}{2n + 1} \px \int_{\R^{2n + 1}} \Tb_n(\etab_n) \hat{\varphi}(\eta_1) \hat{\varphi}(\eta_2) \dotsm \hat{\varphi}(\eta_{2n + 1}) e^{i (\eta_1 + \eta_2 + \dotsb + \eta_{2n + 1}) x} \diff{\etab_n}\\
= ~& - c_n \px \int _\R \int\limits_{\substack{|\eta_j| \leq |\eta_{2n + 1}|\\ \text{for all}\ j = 1, 2, \dotsc, 2n}} \Tb_n(\etab_n) \hat{\varphi}(\eta_1) \hat{\varphi}(\eta_2) \dotsm \hat{\varphi}(\eta_{2n}) e^{i (\eta_1 + \eta_2 + \dotsb + \eta_{2n}) x} \diff{\hat{\etab}_n} \hat{\varphi}(\eta_{2n + 1}) e^{i \eta_{2n + 1} x} \diff{\eta_{2n + 1}}\\
= ~& - c_n \px \int_\R \int\limits_{\substack{|\eta_j| \leq |\eta_{2n + 1}|\\ \text{for all}\ j = 1, 2, \dotsc, 2n}} \Tb_n(\etab_n) \prod_{j = 1}^{2n} \bigg[\chi\bigg(\frac{(2n + 1) \eta_j}{\eta_{2n + 1}}\bigg) + 1 - \chi\bigg(\frac{(2n + 1) \eta_j}{\eta_{2n + 1}}\bigg)\bigg] \hat{\varphi}(\eta_j)\\
& \hspace{2.5in} \cdot e^{i (\eta_1 + \eta_2 + \dotsb + \eta_{2n}) x} \diff{\hat{\etab}_n} \hat{\varphi}(\eta_{2n + 1}) e^{i \eta_{2n + 1} x} \diff{\eta_{2n + 1}}.
\end{split}
\end{align}
We expand the product in \eqref{intprod01} into terms of the form $\chi^{2n-\ell} (1-\chi)^{\ell}$ and consider two cases.

{\bf Case \Rm{1}.} When we take only factors of $\chi$ in the expansion of the product, we  get the term
\begin{align}
\label{high-low01}
- c_n \px \int_\R \int\limits_{\substack{|\eta_j| \leq |\eta_{2n + 1}|\\ \text{for all}\ j = 1, 2, \dotsc, 2n}} \Tb_n(\etab_n) \prod_{j = 1}^{2n} \chi\bigg(\frac{(2n + 1) \eta_j}{\eta_{2n + 1}}\bigg) \hat{\varphi}(\eta_j) \cdot e^{i (\eta_1 + \eta_2 + \dotsb + \eta_{2n}) x} \diff{\hat{\etab}_n} \hat{\varphi}(\eta_{2n + 1}) e^{i \eta_{2n + 1} x} \diff{\eta_{2n + 1}}.
\end{align}
In the following, we write $\s_n = (s_1, s_2, \dotsc, s_{2n + 1})$.

By \eqref{defTn}, we can write
\begin{align*}
\Tb_n(\etab_n) &= - \int_\R |\zeta|^{\alpha - 1} \sgn{\zeta} \int_{[0, 1]^{2n + 1}} \prod_{j = 1}^{2n + 1} i \eta_j e^{i \eta_j s_j \zeta} \diff{\s_n} \diff{\zeta}\\
&= - 2 i \Gamma(\alpha) \sin\bigg(\frac{\pi \alpha}{2}\bigg) \prod_{j = 1}^{2n + 1} (i \eta_j) \int_{[0, 1]^{2n + 1}} \bigg|\sum_{j = 1}^{2n + 1} \eta_j s_j\bigg|^{- \alpha} \sgn\bigg(\sum_{j = 1}^{2n + 1} \eta_j s_j\bigg) \diff{\s_n}\\
&= -2 \Gamma(\alpha - 1) \sin\bigg(\frac{\pi \alpha}{2}\bigg) |\eta_{2n + 1}|^{1 - \alpha} \prod_{j = 1}^{2n} (i \eta_j) \int_{[0, 1]^{2n}} \bigg|1 + \sum_{j = 1}^{2n} \frac{\eta_j}{\eta_{2n + 1}} s_j\bigg|^{1 - \alpha} - \bigg|\sum_{j = 1}^{2n} \frac{\eta_j}{\eta_{2n + 1}} s_j\bigg|^{1 - \alpha} \diff{\hat{\s}_n}.
\end{align*}
Substitution of this expression into \eqref{high-low01} yields the following terms
\begin{align}
& - c_n \px \int_\R \int\limits_{\substack{|\eta_j| \leq |\eta_{2n + 1}|\\ \text{for all}\ j = 1, 2, \dotsc, 2n}} \Tb_n^{1 - \alpha}(\etab_n) \prod_{j = 1}^{2n} \chi\bigg(\frac{(2n + 1) \eta_j}{\eta_{2n + 1}}\bigg) \hat{\varphi}(\eta_j) \cdot e^{i (\eta_1 + \eta_2 + \dotsb + \eta_{2n}) x} \diff{\hat{\etab}_n} \hat{\varphi}(\eta_{2n + 1}) e^{i \eta_{2n + 1} x} \diff{\eta_{2n + 1}},\label{decompTn01-1}\\
& - c_n \px \int_\R \int\limits_{\substack{|\eta_j| \leq |\eta_{2n + 1}|\\ \text{for all}\ j = 1, 2, \dotsc, 2n}} \Tb_n^{0}(\etab_n) \prod_{j = 1}^{2n} \chi\bigg(\frac{(2n + 1) \eta_j}{\eta_{2n + 1}}\bigg) \hat{\varphi}(\eta_j) \cdot e^{i (\eta_1 + \eta_2 + \dotsb + \eta_{2n}) x} \diff{\hat{\etab}_n} \hat{\varphi}(\eta_{2n + 1}) e^{i \eta_{2n + 1} x} \diff{\eta_{2n + 1}},\label{decompTn01-2}\\
& - c_n \px \int_\R \int\limits_{\substack{|\eta_j| \leq |\eta_{2n + 1}|\\ \text{for all}\ j = 1, 2, \dotsc, 2n}} \Tb_n^{- \alpha}(\etab_n) \prod_{j = 1}^{2n} \chi\bigg(\frac{(2n + 1) \eta_j}{\eta_{2n + 1}}\bigg) \hat{\varphi}(\eta_j) \cdot e^{i (\eta_1 + \eta_2 + \dotsb + \eta_{2n}) x} \diff{\hat{\etab}_n} \hat{\varphi}(\eta_{2n + 1}) e^{i \eta_{2n + 1} x} \diff{\eta_{2n + 1}},\label{decompTn01-3}\\
& - c_n \px \int_\R \int\limits_{\substack{|\eta_j| \leq |\eta_{2n + 1}|\\ \text{for all}\ j = 1, 2, \dotsc, 2n}} \Tb_n^{\leq -1}(\etab_n) \prod_{j = 1}^{2n} \chi\bigg(\frac{(2n + 1) \eta_j}{\eta_{2n + 1}}\bigg) \hat{\varphi}(\eta_j) \cdot e^{i (\eta_1 + \eta_2 + \dotsb + \eta_{2n}) x} \diff{\hat{\etab}_n} \hat{\varphi}(\eta_{2n + 1}) e^{i \eta_{2n + 1} x} \diff{\eta_{2n + 1}},\label{decompTn01-4}
\end{align}
where
\begin{align*}
\Tb_n^{1 - \alpha}(\etab_n) &= -2 \Gamma(\alpha - 1) \sin\bigg(\frac{\pi \alpha}{2}\bigg) |\eta_{2n + 1}|^{1 - \alpha} \prod_{j = 1}^{2n} (i \eta_j),\\
\Tb_n^{0}(\etab_n) &= 2 \Gamma(\alpha - 1) \sin\bigg(\frac{\pi \alpha}{2}\bigg) \prod_{j = 1}^{2n} (i \eta_j) \int_{[0, 1]^{2n}} \bigg|\sum_{j = 1}^{2n} \eta_j s_j\bigg|^{1 - \alpha} \diff{\hat{\s}_n},\\
\Tb_n^{- \alpha}(\etab_n) &= -2 \Gamma(\alpha) \sin\bigg(\frac{\pi \alpha}{2}\bigg) |\eta_{2n + 1}|^{-\alpha} (\sgn{\eta_{2n + 1}}) \prod_{j = 1}^{2n} (i \eta_j) \int_{[0, 1]^{2n}} \sum_{j = 1}^{2n} \eta_j s_j \diff{\hat{\s}_n},\\
\Tb_n^{\leq -1}(\etab_n) &= - 2 \Gamma(\alpha - 1) \sin\bigg(\frac{\pi \alpha}{2}\bigg) |\eta_{2n + 1}|^{1 - \alpha} \prod_{j = 1}^{2n} (i \eta_j)\\
& \qquad \cdot \int_{[0, 1]^{2n}} \bigg\{\bigg|1 + \sum_{j = 1}^{2n} \frac{\eta_j}{\eta_{2n + 1}} s_j\bigg|^{1 - \alpha} - 1 - (1 - \alpha) \sum_{j = 1}^{2n} \frac{\eta_j}{\eta_{2n + 1}} s_j\bigg\} \diff{\hat{\s}_n}.
\end{align*}

We claim that \eqref{decompTn01-1}, \eqref{decompTn01-2}, and \eqref{decompTn01-3} can be written as
\begin{align}
\label{TBdecomp01}
\px |\px|^{1 - \alpha} T_{B^{1 - \alpha}[\varphi]} \varphi + \Rc_{1, 1}, \quad \px T_{B^0[\varphi]} \varphi + \Rc_{1, 2}, \quad \text{and} \quad \px T_{B^{-\alpha}[\varphi]} \varphi + \Rc_{1, 3},
\end{align}
where $\Rc_{1, 1}$, $\Rc_{1, 2}$, and $\Rc_{1, 3}$ satisfy the estimate \eqref{R1est01}. Indeed,
\begin{align*}
\F\left[\px T_{B^{1 - \alpha}_n[\varphi]} \big(|\px|^{1 - \alpha} \varphi\big)\right](\xi) &= - \frac{c_n}{\pi} \Gamma(\alpha - 1) \sin\bigg(\frac{\pi \alpha}{2}\bigg) i \xi \int_\R \chi\bigg(\frac{|\xi - \eta|}{|\xi + \eta|}\bigg) |\eta|^{1 - \alpha}\\
& \qquad \int_{\R^{2n}} \delta\bigg(\xi - \eta - \sum_{j = 1}^{2n} \eta_j\bigg) \prod_{j = 1}^{2n} \bigg(i \eta_j \hat{\varphi}(\eta_j) \chi\Big(\frac{2 (2n + 1) \eta_j}{\xi + \eta}\Big)\bigg) \diff{\hat{\etab}_n} \hat{\varphi}(\eta) \diff{\eta},
\end{align*}
while the Fourier transform of \eqref{decompTn01-1} is
\begin{align*}
& -\frac{c_n}{\pi} \Gamma(\alpha - 1) \sin\bigg(\frac{\pi \alpha}{2}\bigg) i \xi \int_\R \int\limits_{\substack{|\eta_j| \leq |\eta_{2n + 1}|\\ \text{for all}\ j = 1, 2, \dotsc, 2n}} \delta\bigg(\xi - \sum_{j = 1}^{2n + 1} \eta_j\bigg) |\eta_{2n + 1}|^{1 - \alpha}\\
& \hspace{1in} \cdot \prod_{j = 1}^{2n} \bigg(\chi\Big(\frac{(2n + 1) \eta_j}{\eta_{2n + 1}}\Big) (i \eta_j) \hat{\varphi}(\eta_j)\bigg) \diff{\hat{\etab}_n} \hat{\varphi}(\eta_{2n + 1}) \diff{\eta_{2n + 1}}.
\end{align*}
The difference of the above two integrals is
\begin{align}
\label{error01}
\begin{split}
- \frac{c_n}{\pi} & \Gamma(\alpha - 1) \sin\bigg(\frac{\pi \alpha}{2}\bigg) i \xi \int_{\R^{2n + 1}} \delta\bigg(\xi - \sum_{j = 1}^{2n + 1} \eta_j\bigg) |\eta_{2n + 1}|^{1 - \alpha} \\
& \cdot \Bigg[\chi\bigg(\frac{|\xi - \eta_{2n + 1}|}{|\xi + \eta_{2n + 1}|}\bigg) \prod_{j = 1}^{2n} \bigg(i \eta_j \hat{\varphi}(\eta_j) \chi\Big(\frac{2 (2n + 1) \eta_j}{\xi + \eta_{2n + 1}}\Big)\bigg)\\
& \quad - \mathbb{I}_n \prod_{j = 1}^{2n} \bigg(i \eta_j \hat{\varphi}(\eta_j) \chi\Big(\frac{(2n + 1) \eta_j}{\eta_{2n + 1}}\Big)\bigg)\Bigg] \diff{\hat{\etab}_n} \hat{\varphi}(\eta_{2n + 1}) \diff{\eta_{2n + 1}},
\end{split}
\end{align}
where $\mathbb{I}_n$ is the function which is equal to $1$ on $\{|\eta_j| \leq |\eta_{2n + 1}|,\ \text{for all}\ 1, \dotsc, 2n\}$ and equal to zero otherwise.

When $\etab_n$ satisfies
\begin{align}
\label{etancon01}
|\eta_j| \leq \frac{1}{40} \frac{1}{2 n + 1} |\eta_{2 n + 1}| \qquad \text{for all}\ j = 1, 2, \dotsc, 2n,
\end{align}
we have $\mathbb{I}_n = 1$ and $\chi \left(\frac{(2n + 1) \eta_j}{\eta_{2n + 1}}\right) = 1$. In addition, since $\xi = \sum_{j = 1}^{2n + 1} \eta_j$, we have
\begin{align*}
& \frac{|\xi - \eta_{2n + 1}|}{|\xi + \eta_{2n + 1}|} = \frac{\left|\sum_{j = 1}^{2n} \eta_j\right|}{\left|\sum_{j = 1}^{2n} \eta_j + 2 \eta_{2n + 1}\right|} \leq \frac{\frac{1}{40} |\eta_{2n + 1}|}{(2 - \frac{1}{40}) |\eta_{2n + 1}|} = \frac{1}{79} < \frac{3}{40},\\
& \frac{2 (2n + 1) |\eta_j|}{|\xi + \eta_{2n + 1}|} \leq \frac{\frac{1}{20} |\eta_{2n + 1}|}{(2 - \frac{1}{40}) |\eta_{2n + 1}|} = \frac{2}{79} < \frac{3}{40}.
\end{align*}

Therefore, the integrand of \eqref{error01} is supported outside of the set \eqref{etancon01}, and there exists $j_1 \in \{1, \dotsc, 2n\}$, such that $|\eta_{2n + 1}| \geq |\eta_{j_ 1}| > \frac{1}{40} \frac{1}{2n + 1} |\eta_{2n + 1}|$. It follows from this comparability of $|\eta_{j_1}|$ and $|\eta_{2n + 1}|$ that the $H^s$-norm of the error term \eqref{error01} can be bounded by
\[
\|\vp\|_{H^s} C(n, s) |c_n| \|\vp\|_{W^{4, \infty}}^{2n},
\]
so \eqref{decompTn01-1} can be written as in \eqref{TBdecomp01}. Similar calculations apply to \eqref{decompTn01-2} and \eqref{decompTn01-3}.

The symbols $B^{1 - \alpha}_n[\varphi]$ and $B^0_n[\varphi]$ are real, so that $T_{B^{1 - \alpha}_n[\varphi]}$ and $T_{B^0_n[\varphi]}$ are self-adjoint. On the contrary, the symbol $B^{- \alpha}[\varphi]$ is purely imaginary. Again, without loss of generality, we assume $|\eta_{2n}| = \max_{1 \leq j \leq 2n} |\eta_j|$ and observe that
\begin{align*}
\int_{[0, 1]^{2n}} \bigg|\sum_{j = 1}^{2n} \eta_j s_j\bigg|^{1 - \alpha} \diff{\hat{\s}_n} = |\eta_{2n}|^{1 - \alpha} + O(1).
\end{align*}
Thus, using Young's inequality, we obtain the symbol estimates \eqref{B-est01}.

To estimate \eqref{decompTn01-4}, we observe that on the support of the functions $\chi\left(\frac{(2n + 1) \eta_j}{\eta_{2n + 1}}\right)$, we have
\[
\frac{|\eta_j|}{|\eta_{2n + 1}|} \leq \frac{1}{10 (2n + 1)}.
\]
Since $s_j \in [0, 1]$, a Taylor expansion gives
\[
\left|\Tb_n^{\leq -1}(\etab_n)\right| \lesssim \frac{\left[\prod_{j = 1}^{2n} |\eta_j|\right] \left[\sum_{j = 1}^{2n} |\eta_j|\right]}{|\eta_{2n + 1}|^{1 + \alpha}}.
\]
Therefore, the $H^s$-norm of \eqref{decompTn01-4} is bounded by $C(n, s) |c_n| \|\vp\|_{H^s} \|\vp\|_{W^{4,\infty}}^{2n}$.

{\bf Case \Rm{2}.} When there is at least one factor of the form $1 - \chi$ in the expansion of the product in the integral \eqref{intprod01}, we get a term like
\begin{align}
\label{error'01}
\begin{split}
& c_n \px \int_\R \int\limits_{\substack{|\eta_j| \leq |\eta_{2n + 1}|\\ \text{for all}\ j = 1, 2, \dotsc, 2n}} \Tb_n(\etab_n) \prod_{k = 1}^\ell \bigg[1 - \chi\bigg(\frac{(2n + 1) \eta_{j_k}}{\eta_{2n + 1}}\bigg)\bigg] \prod_{k = \ell + 1}^{2n} \chi\bigg(\frac{(2n + 1) \eta_{j_k}}{\eta_{2n + 1}}\bigg)\\
& \hspace{2.5in} \cdot \prod_{j = 1}^{2n} \hat{\varphi}(\eta_j) e^{i (\eta_1 + \eta_2 + \dotsb + \eta_{2n}) x} \diff{\hat{\etab}_n} \hat{\varphi}(\eta_{2n + 1}) e^{i \eta_{2n + 1} x} \diff{\eta_{2n + 1}},
\end{split}
\end{align}
where $1 \leq \ell \leq 2n$ is an integer, and $\{j_k : k = 1, \dotsc, 2n\}$ is a permutation of $\{1, \dotsc, 2n\}$.

The function $1 - \chi\left(\frac{(2n + 1) \eta_{j_1}}{\eta_{2n + 1}}\right)$ is compactly supported on
\[
\frac{|\eta_{j_1}|}{|\eta_{2n + 1}|}\geq \frac{3}{40(2n + 1)}.
\]
By assumption, $\eta_{2n + 1}$ has the largest absolute value, so
\[
\frac{3}{40(2n + 1)} |\eta_{2n + 1}| \leq |\eta_{j_1}|\leq |\eta_{2n + 1}|,
\]
meaning that the frequencies $|\eta_{j_1}|$ and $|\eta_{2n + 1}|$ are comparable. Using Lemma \ref{Tn-roughbdd01}, we can bound the $H^s$-norm of \eqref{error'01} by
\[
\|\vp\|_{H^s} \bigg(\sum_{n = 1}^\infty  C(n, s) |c_n| \|\varphi\|_{W^{4, \infty}}^{2n}\bigg),
\]
and the proposition follows.

\end{proof}

\begin{proposition}
\label{varphipsinonlindecomp01}
For $0 < \alpha \leq 1$, suppose that $\varphi(\cdot, t), \psi(\cdot, t) \in H^s(\R)$ with $s \geq 5$ and $\|\varphi\|_{W^{4, \infty}} + \|\psi\|_{W^{4, \infty}}$ is sufficiently small. Then we can write
\begin{align*}
 \sum_{n = 1}^\infty \sum_{\ell = 0}^n \sum_{m = 1}^{2n - \ell + 1} d_{n, \ell, m, 1} \px\left\{\big(\varphi(x, t)\big)^{2n - \ell + 1 - m} |\px|^{n + \frac{1 - \alpha}{2}} K_{n + \frac{1 - \alpha}{2}}(2 h |\px|)\big(\psi(x, t)\big)^m\right\} = T_{\Bf_1[\vp, \psi]} \vp_x + \Rc_2,
\end{align*}
where
\begin{align}
\label{Bfdef01}
\begin{split}
\Bf_1[\vp, \psi] &= \sum_{n = 1}^\infty \sum_{\ell = 0}^n \sum_{m = 1}^{2n - \ell} d_{n, \ell, m, 1} (2n - \ell + 1 - m) \Bf_{1, n, \ell, m}[\vp, \psi],\\
\Bf_{1, n, \ell, m}[\vp, \psi] &= \varphi^{2n - \ell - m} |\px|^{n + \frac{1 - \alpha}{2}} K_{n + \frac{1 - \alpha}{2}}(2 h |\px|)\psi^m.
\end{split}
\end{align}
The symbol $\Bf_1[\vp, \psi]$ and remainder $\Rc_2$ satisfy symbol estimates
\begin{align}
\label{BfRc2est}
\begin{split}
\|\Bf_1[\vp, \psi]\|_{\M_{(3, 5)}} & \lesssim \sum_{n = 1}^\infty \sum_{\ell = 0}^n \sum_{m = 1}^{2n - \ell} C(n, s) h^{\ell - 2n - (1 - \alpha)} \|\vp\|_{W^{4, \infty}}^{2n - \ell - m} \|\psi\|_{W^{4, \infty}}^m,\\
\|\Rc_2\|_{H^s} &\lesssim \|\vp\|_{H^s} \sum_{n = 1}^\infty \sum_{\ell = 0}^n \sum_{m = 1}^{2n - \ell} C(n, s) h^{\ell - 2n - (1 - \alpha)} \|\psi\|_{W^{4, \infty}}^m \|\vp\|_{W^{4, \infty}}^{2n - \ell - m}\\
& \qquad + \|\psi\|_{H^s} \sum_{n = 1}^\infty \sum_{\ell = 0}^n \sum_{m = 1}^{2n - \ell} C(n, s) h^{\ell - 2n - (2 - \alpha)} \|\psi\|_{W^{4, \infty}}^{m - 1} \|\vp\|_{W^{4, \infty}}^{2n - \ell + 1 - m}\\
&\qquad + \|\psi\|_{H^s} \sum_{n = 1}^\infty \sum_{\ell = 0}^n C(n, s) h^{\ell - 2n + \alpha - \frac{5}{2}} \|\psi\|_{W^{4, \infty}}^{2n - \ell}.
\end{split}
\end{align}
A similar result holds with $\vp$ and $\psi$ exchanged.
\end{proposition}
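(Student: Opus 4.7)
The plan is to apply the Leibniz rule inside the outer $\px$ to split each summand into a part where the derivative falls on the $\vp$-factor and a part where it falls on the Bessel-multiplier factor, then invoke a Bony-type paraproduct decomposition on the first part to extract the operator $T_{\Bf_1[\vp,\psi]}\vp_x$. Set $F_{n,m}[\psi] := |\px|^{n+(1-\alpha)/2}K_{n+(1-\alpha)/2}(2h|\px|)\psi^m$, so that $\Bf_{1,n,\ell,m}[\vp,\psi] = \vp^{2n-\ell-m}F_{n,m}[\psi]$. For $1 \leq m \leq 2n-\ell$, Leibniz yields
\[
\px\bigl\{\vp^{2n-\ell+1-m}F_{n,m}[\psi]\bigr\} = (2n-\ell+1-m)\vp^{2n-\ell-m}F_{n,m}[\psi]\cdot\vp_x + \vp^{2n-\ell+1-m}\px F_{n,m}[\psi].
\]
The second term goes into $\Rc_2$: after commuting $\px$ past $|\px|^{n+(1-\alpha)/2}K_{n+(1-\alpha)/2}(2h|\px|)$, one bounds the resulting Fourier multiplier $|\px|^{n+(3-\alpha)/2}K_{n+(1-\alpha)/2}(2h|\px|)$ on $H^s$ using Lemma~\ref{BesselProp}(v) with $m=1$, and combines this with a Moser product estimate for $\vp^{2n-\ell+1-m}\psi^m$ to produce the $h^{\ell-2n-(2-\alpha)}$ bound appearing in the second line of \eqref{BfRc2est}.

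For the degenerate case $m=2n-\ell+1$, the factor $\vp^{0}$ is absent from the Leibniz expansion and only $\px F_{n,2n-\ell+1}[\psi]$ remains. The identical Bessel/Moser argument, this time with only $\psi$-factors in play, yields the third line of \eqref{BfRc2est}; in this case the exponent of $h$ reflects the fact that all derivatives must be absorbed by the Bessel multiplier acting on $\psi^{2n-\ell+1}$. These two contributions exhaust the cases in which no paradifferential decomposition is needed.

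For the main contribution, we treat the product
\[
g\cdot\vp_x, \qquad g = (2n-\ell+1-m)\Bf_{1,n,\ell,m}[\vp,\psi],
\]
by Bony's decomposition, adapted to the Weyl cut-off of \eqref{weyldef}: $g\vp_x = T_g\vp_x + T_{\vp_x}g + R(g,\vp_x)$. Summation over $n,\ell,m$ of the first type of term, weighted by $d_{n,\ell,m,1}$, reproduces $T_{\Bf_1[\vp,\psi]}\vp_x$ by \eqref{Bfdef01}. The two remaining low-frequency-on-$\vp_x$ pieces are put into $\Rc_2$: since $\vp_x$ is at low or comparable frequency there, they are controlled by $\|\vp\|_{H^s}$ times finitely many $L^\infty$-type norms of $g$, giving the first line of \eqref{BfRc2est}.

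The symbol estimate on $\Bf_1$ is a standard product estimate, with the $\eta$- and $x$-derivatives of $|\eta|^{n+(1-\alpha)/2}K_{n+(1-\alpha)/2}(2h|\eta|)$ bounded by Lemma~\ref{BesselProp}(v) together with the recursion $K'_\nu = -(K_{\nu-1}+K_{\nu+1})/2$, and the $\vp^{2n-\ell-m}\psi^m$ factor bounded by products of $W^{4,\infty}$-norms. The main obstacle is bookkeeping: the constants in $d_{n,\ell,m,1}$ involve Gamma functions that grow factorially in $n$, and one must show they are compensated by the $1/\Gamma(n+1-\alpha/2)$ denominator together with the polynomial-in-$\nu$ bound \eqref{fm-max}, so that the resulting $C(n,s)$ grows at most exponentially and the series in \eqref{BfRc2est} converge under the hypothesis that $\|\vp\|_{W^{4,\infty}}+\|\psi\|_{W^{4,\infty}}$ is sufficiently small.
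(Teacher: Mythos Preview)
Your proposal is correct and follows essentially the same approach as the paper: Leibniz rule to split off the $\vp_x$-factor, Bony's decomposition to extract $T_{\Bf_1[\vp,\psi]}\vp_x$ from the main term, the $m=2n-\ell+1$ case handled separately as pure remainder, Lemma~\ref{BesselProp}(v) for the Bessel-multiplier bounds, and Stirling's formula to show the constants grow at most exponentially in $n$. One small remark: the symbol $\Bf_{1,n,\ell,m}[\vp,\psi]$ is a function of $x$ only, so the $\eta$-derivatives in the $\M_{(3,5)}$ norm vanish trivially and no $\eta$-dependence of the Bessel multiplier enters the symbol estimate.
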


\begin{proof}
We suppress the dependence of variables of $\varphi$ and $\psi$ for simplicity. By the product rule and Bony's decomposition, we see that for $m < 2n - \ell + 1$,
\begin{align*}
& \px\left\{\varphi^{2n - \ell + 1 - m} |\px|^{n + \frac{1 - \alpha}{2}} K_{n + \frac{1 - \alpha}{2}}(2 h |\px|)\psi^m\right\}\\
= ~ & \left[(2n - \ell + 1 - m) \varphi^{2n - \ell - m} |\px|^{n + \frac{1 - \alpha}{2}} K_{n + \frac{1 - \alpha}{2}}(2 h |\px|)\psi^m\right] \varphi_x(x, t)\\
& \qquad + \varphi^{2n - \ell + 1 - m} \px |\px|^{n + \frac{1 - \alpha}{2}} K_{n + \frac{1 - \alpha}{2}}(2 h |\px|)\psi^m\\
= ~ & (2n - \ell + 1 -m) T_{\Bf_{1, n, \ell, m}[\vp, \psi]} \vp_x + \Rc_{2, n, \ell, m},
\end{align*}
where $\Bf_{1, n, \ell, m}[\vp, \psi]$ is defined as in \eqref{Bfdef01}, and, by Lemma \ref{BesselProp},
\begin{align*}
\|\Rc_{2, n, \ell, m}\|_{H^s} & \leq C(n, s) \Gamma\left(n + \frac{1 - \alpha}{2}\right) h^{- n - \frac{1 - \alpha}{2}} \|\vp\|_{H^s} \|\psi\|_{W^{4, \infty}}^m \|\vp\|_{W^{4, \infty}}^{2n - \ell - m}\\
& \qquad + C(n, s) \Gamma\left(n + \frac{1 - \alpha}{2}\right) h^{- n - \frac{3 - \alpha}{2}} \|\psi\|_{H^s} \|\psi\|_{W^{4, \infty}}^{m - 1} \|\vp\|_{W^{4, \infty}}^{2n - \ell + 1 - m}.
\end{align*}
When $m = 2n - \ell + 1$, again, by Lemma \ref{BesselProp}, we have
\begin{align*}
& \Rc_{2, n, \ell, 2n - \ell + 1} = \px |\px|^{n + \frac{1 - \alpha}{2}} K_{n + \frac{1 - \alpha}{2}}(2 h |\px|)\psi^{2n - \ell + 1},\\
& \|\Rc_{2, n, \ell, 2n - \ell + 1}\|_{H^s} \leq C(n, s) \Gamma\left(n + \frac{1 - \alpha}{2}\right) h^{- n - \frac{3 - \alpha}{2}} \|\psi\|_{H^s} \|\psi\|_{W^{4, \infty}}^{2n - \ell}.
\end{align*}

The estimates \eqref{BfRc2est} for $\Bf_1[\vp, \psi]$ and
\[
\Rc_2 = \sum_{n = 1}^\infty \sum_{\ell = 0}^n \sum_{m = 1}^{2n - \ell + 1} \Rc_{2, n, \ell, m},
\]
then follow from the above estimates and Stirling's formula applied to the $\Gamma$-function coefficients.

\end{proof}

\begin{proposition}
\label{prop:paraeqn01}
The first equation of system \eqref{gsqgsys01-expd} can be written as
\begin{align}
\label{paraeqn01}
\begin{split}
\vp_t(x, t) & +
v \vp_x(x, t) + 2\Theta_+ \sin\bigg(\frac{\pi \alpha}{2}\bigg) \Gamma(\alpha - 1) |\px|^{1 - \alpha} \vp_x(x, t)\\
& \qquad+ \px T_{\Bf_+[\vp, \psi]} \vp(x, t) + \Theta_+\px |\px|^{1 - \alpha} \Big\{T_{B^{1 - \alpha}[\varphi]} \varphi(x, t) + D^{-1} T_{B^{- \alpha}[\varphi]} \varphi(x, t)\Big\} + \Rc = 0,
\end{split}
\end{align}
where $T_{\Bf_+[\vp, \psi]}$ is self-adjoint and its symbol and the remainder term $\Rc$ satisfy the estimates
\begin{align}
\begin{split}
 \Bf_+[\vp, \psi] =  \Theta_- \sum_{n = 1}^\infty \sum_{\ell = 0}^n (2n - \ell + 1) d_{n, \ell, 0, 1} \varphi^{2n - \ell} + \Theta_+ B^0[\vp] + \Theta_- \Bf_1[\vp, \psi],
\end{split}\notag\\
\begin{split}
& \|\Bf_+[\vp, \psi]\|_{\M_{(3, 5)}} \lesssim \sum_{n = 1}^\infty \sum_{\ell = 0}^n C(n, s) h^{\ell - n - 1 + \frac{\alpha}{2}} \|\vp\|_{W^{4, \infty}}^{2n - \ell} + \sum_{n = 1}^\infty C(n, s) |c_n| \|\vp\|_{W^{4, \infty}}^{2n}\\
& \hspace{1.5in} + \sum_{n = 1}^\infty \sum_{\ell = 0}^n \sum_{m = 1}^{2n - \ell} C(n, s) h^{\ell - 2n - (1 - \alpha)} \|\vp\|_{W^{4, \infty}}^{2n - \ell - m} \|\psi\|_{W^{4, \infty}}^m,
\end{split}\label{Bfest01}\\
\begin{split}
& \|\Rc\|_{H^s} \lesssim h^{- (1 - \alpha)} \|\psi\|_{H^s}
+ \|\vp\|_{H^s} \sum_{n = 1}^\infty C(n, s) |c_n| \|\vp\|_{W^{4, \infty}}^{2n}
\\
&\hspace{1.5in}+ \|\vp\|_{H^s} \sum_{n = 1}^\infty \sum_{\ell = 0}^n C(n, s) h^{\ell - n - 1 + \frac{\alpha}{2}} \|\vp\|_{W^{4, \infty}}^{2n - \ell}\\
& \hspace{1.5in} + \|\vp\|_{H^s} \sum_{n = 1}^\infty \sum_{\ell = 0}^n \sum_{m = 1}^{2n - \ell} C(n, s) h^{\ell - 2n - (1 - \alpha)} \|\vp\|_{W^{4, \infty}}^{2n - \ell + 1 - m} \|\psi\|_{W^{4, \infty}}^m\\
& \hspace{1.5in} + \|\psi\|_{H^s} \sum_{n = 1}^\infty \sum_{\ell = 0}^n \sum_{m = 1}^{2n - \ell} C(n, s) h^{\ell - 2n - (2 - \alpha)} \|\vp\|_{W^{4, \infty}}^{2n - \ell + 1 - m} \|\psi\|_{W^{4, \infty}}^{m - 1}\\
& \hspace{1.5in} + \|\psi\|_{H^s} \sum_{n = 1}^\infty \sum_{\ell = 0}^n C(n, s) h^{\ell - 2n + \alpha - \frac{5}{2}} \|\psi\|_{W^{4, \infty}}^{2n - \ell}.\label{paraRest01}
\end{split}
\end{align}
\end{proposition}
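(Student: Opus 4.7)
The plan is to read off the identity term-by-term from the first equation of \eqref{gsqgsys01-expd} by invoking Propositions~\ref{varphinonlindecomp01} and~\ref{varphipsinonlindecomp01} and bookkeeping the remaining linear and polynomial pieces. First, the Galilean transport coefficient $\tfrac{\Theta_+-\Theta_-}{2}B(1/2,(1-\alpha)/2)(2h)^{\alpha-1}$ agrees with $v$ by \eqref{defv}, since $v_5=0$ and $v_5-v_4=B(1/2,(1-\alpha)/2)(2h)^{\alpha-1}$ for $0<\alpha<1$. The fractional-Laplacian term $2\Theta_+\sin(\pi\alpha/2)\Gamma(\alpha-1)|\px|^{1-\alpha}\vp_x$ is already in the claimed form. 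The off-diagonal linear piece $\Theta_-\frac{2\sqrt{\pi}}{\Gamma(1-\alpha/2)(4h)^{(1-\alpha)/2}}|\px|^{(1-\alpha)/2}K_{(1-\alpha)/2}(2h|\px|)\psi_x$ has a Fourier symbol that is integrable near $\xi=0$ by Lemma~\ref{BesselProp}(iii) and decays exponentially at infinity by Lemma~\ref{BesselProp}(iv), so it defines an $L^2$-bounded (indeed smoothing) operator whose $H^s\to H^s$ norm scales like $h^{-(1-\alpha)}$; this contributes the $h^{-(1-\alpha)}\|\psi\|_{H^s}$ summand of $\Rc$.

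Next, I apply Proposition~\ref{varphinonlindecomp01} to the $\Tb_n$-sum involving only $\vp$: multiplied by $\Theta_+$, this directly yields the stated $\Theta_+\px|\px|^{1-\alpha}\{T_{B^{1-\alpha}[\vp]}\vp+D^{-1}T_{B^{-\alpha}[\vp]}\vp\}$ piece in \eqref{paraeqn01}, a contribution $\Theta_+\px T_{B^0[\vp]}\vp$ which will be absorbed into $\px T_{\Bf_+[\vp,\psi]}\vp$, and a remainder $\Theta_+\Rc_1$ controlled by \eqref{R1est01}. The pure-$\vp$ polynomial terms $\Theta_-\sum_{n,\ell}d_{n,\ell,0,1}\px\{\vp^{2n-\ell+1}\}$ are expanded by the Leibniz rule to $\Theta_-\sum_{n,\ell}(2n-\ell+1)d_{n,\ell,0,1}\vp^{2n-\ell}\vp_x$; a Bony paraproduct decomposition writes each such product as $\px T_{(2n-\ell+1)\Theta_-d_{n,\ell,0,1}\vp^{2n-\ell}}\vp$ modulo a smoother remainder whose $H^s$-norm is controlled using the explicit factor $|d_{n,\ell,0,1}|\lesssim C(n,s)h^{\ell-n-1+\alpha/2}$ coming from the Beta-function appearing in the $m=0$ case. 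The cross polynomial-Bessel terms with $m\geq 1$ are handled directly by Proposition~\ref{varphipsinonlindecomp01}: multiplied by $\Theta_-$ they give $\Theta_- T_{\Bf_1[\vp,\psi]}\vp_x+\Theta_-\Rc_2$, contributing $\Theta_-\Bf_1[\vp,\psi]$ to the symbol and $\Theta_-\Rc_2$ to the remainder.

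Assembling the three symbol contributions identifies $\Bf_+[\vp,\psi]$ as stated. Self-adjointness of $T_{\Bf_+[\vp,\psi]}$ follows because each constituent symbol is real-valued: the factor $\prod_{j=1}^{2n}(i\eta_j)=(-1)^n\prod\eta_j$ in $B^0_n[\vp]$ is real, the polynomial coefficients in $\vp$ are manifestly real, and $\Bf_{1,n,\ell,m}[\vp,\psi]$ is the product of real polynomials in $\vp,\psi$ with the real, even, nonnegative multiplier $|\xi|^{n+(1-\alpha)/2}K_{n+(1-\alpha)/2}(2h|\xi|)$. The bound \eqref{Bfest01} follows by the triangle inequality from \eqref{B-est01} and \eqref{BfRc2est} together with the elementary bound on the polynomial symbol coefficients, and \eqref{paraRest01} is obtained by summing \eqref{R1est01}, \eqref{BfRc2est}, the remainder from the Leibniz/Bony step, and the linear Bessel bound above. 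The only subtlety of the argument is combinatorial: to guarantee absolute convergence of the series defining $\Bf_+$ and $\Rc$, one needs the coefficients $|c_n|,|d_{n,\ell,m,1}|$ together with the Bessel factors $\Gamma(n+(1-\alpha)/2)(2h)^{-n}$ to grow no faster than exponentially in $n$, which is verified via Stirling's formula and absorbed into the constants $C(n,s)$; smallness of $\vp_0,\psi_0$ in the sense of condition~(ii) of Theorem~\ref{existence01} is precisely what compensates for this growth.
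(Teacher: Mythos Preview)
Your proof is correct and follows essentially the same approach as the paper: you invoke Propositions~\ref{varphinonlindecomp01} and~\ref{varphipsinonlindecomp01} for the two nonlinear sums, use Bony's decomposition on the pure-$\vp$ polynomial terms, bound the linear Bessel cross-term via Lemma~\ref{BesselProp}, and assemble the symbol $\Bf_+$ with self-adjointness following from real-valuedness. The paper's own proof is slightly terser and mentions Kato--Ponce commutator estimates explicitly to pass between $T_{a}\vp_x$ and $\px T_a\vp$, but otherwise the arguments coincide.
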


\begin{proof}
We first use Bony's decomposition to write the nonlinear terms as
\[
d_{n, \ell, 0, 1} \px \left\{\left(\varphi(x, t)\right)^{2n - \ell + 1}\right\} = d_{n, \ell, 0, 1} (2n - \ell + 1) T_{\left(\varphi(x, t)\right)^{2n - \ell}} \varphi_x(x, t) + \Rc_3,
\]
where
\[
\|\Rc_3\|_{H^s} \lesssim C(n, s) h^{\ell - n - 1 - \frac{\alpha}{2}} \|\vp\|_{H^s} \|\vp\|_{W^{1, \infty}}^{2n - \ell}.
\]
Using Lemma \ref{BesselProp}, we obtain
\[
\bigg\|\Theta_- \frac{2 \sqrt{\pi}}{\Gamma\left(1 - \frac{\alpha}{2}\right) (4 h)^{\frac{1 - \alpha}{2}}} |\px|^{\frac{1 - \alpha}{2}} K_{\frac{1 - \alpha}{2}}(2 h |\px|) \psi_x\bigg\|_{H^s} \lesssim h^{- (1 - \alpha)} \|\psi\|_{H^s}.
\]
Therefore, by Proposition \ref{varphinonlindecomp01}, Proposition \ref{varphipsinonlindecomp01}, and Kato-Ponce type commutator estimates, we obtain \eqref{paraeqn01} and the estimates \eqref{Bfest01} and \eqref{paraRest01}. The self-adjointness of $T_{\Bf[\vp, \psi]}$ follows from the fact that the symbol $\Bf[\vp, \psi]$ is real-valued.
\end{proof}

\subsection{Energy estimates and local existence}

In this subsection, we omit the dependence of $\vp$ or $\psi$ in the symbols $\beta[\vp]$, $B^{1 - \alpha}[\vp]$, $B^{- \alpha}[\vp]$, and $\Bf[\vp, \psi]$ when there is no ambiguity.

Writing
\begin{equation}
\vartheta = 2  \sin\Big(\frac{\pi \alpha}{2}\Big) \frac{\Gamma(\alpha)}{1 - \alpha}=-2 \sin\Big(\frac{\pi \alpha}{2}\Big) \Gamma(\alpha-1),
\label{def_vartheta}
\end{equation}
we define $\beta[\vp](x, \eta)$ as
\begin{align}
\label{betadef01}
\beta[\vp](x, \eta) = \left(1 - \frac{1}{\vartheta} B^{1 - \alpha}[\vp](x, \eta)\right)^{ \frac{1 - \alpha}{2 (2 - \alpha)}},
\end{align}
which, thanks to \eqref{defB01}, is a solution to the first order variable coefficient PDE
\[
\eta \px B^{1 - \alpha} \partial_\eta \beta + \left[\vartheta_+ (2 - \alpha) - \eta \partial_\eta B^{1 - \alpha} - (2 - \alpha) B^{1 - \alpha}\right] \px \beta + i B^{- \alpha} \beta = 0.
\]

Since $B^{1 - \alpha}[\vp] \in \M_{(3, 5)}$ with estimates \eqref{B-est01}, we derive that $\beta[\vp] \in \M_{(3, 5)}$ with estimates
\begin{align}
\label{beta-est}
\|\beta[\vp] - 1\|_{\M_{(3, 5)}} + \|\partial_t \beta[\vp]\|_{\M_{(1, 1)}} + \|\px \beta[\vp]\|_{\M_{(2, 5)}} \lesssim \sum_{n = 1}^\infty C(n, s) |c_n| \|\vp\|_{W^{4, \infty}}^{2n}.
\end{align}

According to the decomposition of the nonlinear terms in last subsection, we construct a weighted energy as follows.
\begin{align}
\label{weightedE01}
\begin{split}
E_\vp^{(j)}(t) & = \int_\R |D|^j T_{\beta[\vp]} \varphi(x, t) \cdot \left(\vartheta - T_{B^{1 - \alpha}[\vp]}\right)^{2j + 1}  |D|^j T_{\beta[\vp]} \vp(x, t) \diff{x},\\
\tilde{E}_\vp^{(s)}(t) & = \|\vp\|_{L^2(\R)}^2 + \sum_{j = 1}^s E^{(j)}(t).
\end{split}
\end{align}

Now we are ready to derive \emph{a priori} estimates for the system.

\begin{proposition}
\label{apriori01}
Let $s \geq 5$ be an integer and $(\vp, \psi)$ a smooth solution of \eqref{gsqgsys01-expd}, with $(\vp_0, \psi_0) \in H^s(\R) \times H^s(\R)$. There exists a constant $\tilde{C} > 1$, depending only on $s$, such that if $(\vp_0, \psi_0)$ satisfyies
\begin{align}
\label{initbdd01}
\begin{split}
& \left\|\vartheta - T_{B^{1 - \alpha}[\vp_0]}\right\|_{L^2 \to L^2} \ge m_0,\qquad \left\|\vartheta - T_{B^{1 - \alpha}[ \psi_0]}\right\|_{L^2 \to L^2} \ge m_0,\\
& \left\|T_{\beta[\vp_0]}\right\|_{L^2 \to L^2} \geq m_0', \qquad \left\|T_{\beta[\psi_0]}\right\|_{L^2 \to L^2} \geq m_0',\\
& \sum_{n = 0}^\infty \sum_{\ell = 0}^n \sum_{m = 0}^{2n - \ell} \tilde{C}^n \left(1 + h^{\ell - 2n + \alpha - \frac{5}{2}}\right) \|\vp_0\|_{W^{4, \infty}}^{2n - \ell - m} \|\psi_0\|_{W^{4, \infty}}^m < \infty,
\end{split}
\end{align}
for some constants $m_0, m_0' > 0$, then there exists a time $T > 0$ such that
\begin{align*}
& \left\|\vartheta - T_{B^{1 - \alpha}[\vp]}\right\|_{L^2 \to L^2} > \frac12m_0, \qquad \left\|\vartheta - T_{B^{1 - \alpha}[\psi]}\right\|_{L^2 \to L^2} > \frac12m_0,\\
& \left\|T_{\beta[\vp]}\right\|_{L^2 \to L^2} >  \frac12m_0', \qquad \left\|T_{\beta[\psi]}\right\|_{L^2 \to L^2} >  \frac12m_0',\\
& \sum_{n = 0}^\infty \sum_{\ell = 0}^n \sum_{m = 0}^{2n - \ell} \tilde{C}^n \left(1 + h^{\ell - 2n + \alpha - \frac{5}{2}}\right) \|\vp\|_{W^{4, \infty}}^{2n - \ell - m} \|\psi\|_{W^{4, \infty}}^m < \infty,
\end{align*}
for all $t \in [0, T]$, and
\begin{align}
\label{apest01}
\frac{\diff}{\diff{t}} \left\{\tilde{E}_\vp^{(s)}(t) + \tilde{E}_\psi^{(s)}(t)\right\} \leq \left(\|\vp\|_{H^s}^2 + \|\psi\|_{H^s}^2\right) \cdot F\left(\|\vp\|_{W^{4, \infty}}, \|\psi\|_{W^{4, \infty}}\right),
\end{align}
where $\tilde{E}_\vp^{(s)}(t)$ is defined in \eqref{weightedE01} and $\tilde{E}_\psi^{(s)}(t)$ can be defined analogously, and $F \colon \R_+ \times \R_+ \to \R$ is a continuous, real-valued function that is monotone-increasing in either variables, such that
\begin{align}
\label{defF01}
F\left(\|\vp\|_{W^{4, \infty}}, \|\psi\|_{W^{4, \infty}}\right) \approx \sum_{n = 0}^\infty \sum_{\ell = 0}^n \sum_{m = 0}^{2n - \ell} \tilde{C}^n \left(1 + h^{\ell - 2n + \alpha - \frac{5}{2}}\right) \|\vp\|_{W^{4, \infty}}^{2n - \ell - m} \|\psi\|_{W^{4, \infty}}^m.
\end{align}
\end{proposition}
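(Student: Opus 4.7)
My plan has two components: propagate the hypotheses (i)--(ii) by continuity, and derive the dissipation/commutator structure for the weighted energy $\tilde E_\vp^{(s)} + \tilde E_\psi^{(s)}$.

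First I would run a standard bootstrap argument. Because $\vp,\psi\in C^0([0,T);H^s)$ with $s\ge 5$, the embeddings $H^s\hookrightarrow W^{4,\infty}$ give continuity in $t$ of $\|\vp(t)\|_{W^{4,\infty}}$ and $\|\psi(t)\|_{W^{4,\infty}}$, hence continuity in $t$ of $\|B^{1-\alpha}[\vp(t)]\|_{\M_{(3,5)}}$ and $\|\beta[\vp(t)]\|_{\M_{(3,5)}}$ via \eqref{B-est01}, \eqref{beta-est}. Therefore the operator norms $\|\vartheta-T_{B^{1-\alpha}[\vp(t)]}\|_{L^2\to L^2}$ and $\|T_{\beta[\vp(t)]}\|_{L^2\to L^2}$, as well as the series in (ii), vary continuously with $t$. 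Choose $T>0$ so that the three quantities stay within a factor $1/2$ of their initial values; this closes the bootstrap once the energy bound \eqref{apest01} is proved.

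Next I would differentiate $E_\vp^{(j)}$. Writing $w=\vartheta - T_{B^{1-\alpha}[\vp]}$ and $u=|D|^jT_{\beta[\vp]}\vp$, the product rule and self-adjointness of $T_\beta$ and $w$ give
\[
\frac{d}{dt}E_\vp^{(j)} = 2\int\!\! u_t\, w^{2j+1} u\diff x + (2j+1)\int\!\! u\, w^{2j}(\partial_t w)\, u\diff x + 2\int\!\! (|D|^jT_{\beta_t}\vp)\, w^{2j+1} u\diff x.
\]
I substitute $\vp_t$ from \eqref{paraeqn01}. The transport term $v\vp_x$ gives an easy skew-symmetric contribution after integration by parts. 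The principal piece is the combination $-\Theta_+\px|\px|^{1-\alpha}\{-w\vp + D^{-1}T_{B^{-\alpha}[\vp]}\vp\}$; after conjugation by $|D|^jT_\beta$ and symmetrization with the weight $w^{2j+1}$, one extracts a skew-adjoint operator of order $2-\alpha$ (giving zero energy contribution) plus subprincipal terms. By Lemmas \ref{weyl-comm} and \ref{lem-DsT}, these subprincipal terms are expressed in terms of Poisson brackets $\{\beta,B^{1-\alpha}\}$ and symbols like $B^{-\alpha}\beta$, $\eta\px B^{1-\alpha}\partial_\eta\beta$, etc. The first-order PDE for $\beta$ recorded just before \eqref{beta-est} was chosen precisely so that the $2j+1$ Poisson-bracket contributions from $w^{2j+1}$ together with the $T_{\beta_t}$ piece (using $\partial_t\beta$ expressed via $\vp_t$ to leading order) cancel the $D^{-1}T_{B^{-\alpha}}$ contribution modulo a remainder controlled in $H^s$. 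What survives is bounded by \eqref{beta-est}, \eqref{B-est01}, \eqref{paraRest01} and the coupling estimate \eqref{Bfest01} for $T_{\Bf_+[\vp,\psi]}$. All factors exhibit the $h^{\ell-2n+\alpha-5/2}$ weights and geometric series in $\tilde C^n\|\vp\|_{W^{4,\infty}}^{a}\|\psi\|_{W^{4,\infty}}^{b}$ that assemble into $F$ in \eqref{defF01}.

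Running the analogous calculation for $\psi$ and summing over $0\le j\le s$, the cross terms produced by $T_{\Bf_+[\vp,\psi]}$ and $T_{\Bf_-[\vp,\psi]}$ are symmetric in $(\vp,\psi)$ so they absorb into the right-hand side of \eqref{apest01}. Finally, the lower bounds in (i), preserved over $[0,T]$ by the bootstrap, give $\tilde E_\vp^{(s)}+\tilde E_\psi^{(s)}\approx \|\vp\|_{H^s}^2+\|\psi\|_{H^s}^2$, closing the estimate. The main technical obstacle is Step 3: the nonlinear terms $\px|\px|^{1-\alpha}T_{B^{1-\alpha}}\vp$ share the order $2-\alpha$ of the linear dispersion, so no naive derivative counting works. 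Only the simultaneous use of the conjugating operator $T_\beta$ and the scalar weight $w^{2j+1}$, matched by the transport PDE that $\beta$ solves in symbol space, produces the required cancellation at the principal order; verifying this cancellation rigorously — including that the Poisson-bracket remainders from Lemma \ref{weyl-comm} do not reintroduce top-order contributions — is the heart of the argument.
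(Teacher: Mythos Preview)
Your high-level architecture is right: bootstrap the smallness by continuity, differentiate the weighted energy, exploit the PDE for $\beta$ to cancel the top-order nonlinear piece, and bound the rest by the symbol estimates in \eqref{B-est01}, \eqref{Bfest01}, \eqref{paraRest01}, \eqref{beta-est}. That matches the paper. But the specific cancellation mechanism you describe is not the one that works, and if you tried to execute your Step~3 as written you would not see the cancellation.

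You claim that ``the $2j+1$ Poisson-bracket contributions from $w^{2j+1}$ together with the $T_{\beta_t}$ piece \dots\ cancel the $D^{-1}T_{B^{-\alpha}}$ contribution.'' That is not correct. In the paper's argument (and in any argument that closes), the term $T_{\partial_t\beta}\vp$ and the commutators arising from $w^{2j+1}=(\vartheta-T_{B^{1-\alpha}})^{2j+1}$ are \emph{lower-order} and are estimated directly using \eqref{beta-est} and Lemma~\ref{lem-dtcomm01}; they play no role in the principal cancellation. The PDE satisfied by $\beta$ is designed so that the three terms
\[
\vartheta\,[T_\beta,\px|\px|^{1-\alpha}]\vp,\qquad -[T_\beta,\px|\px|^{1-\alpha}T_{B^{1-\alpha}}]\vp,\qquad -\px D^{-1}|\px|^{1-\alpha}T_{B^{-\alpha}}T_\beta\vp
\]
add up to a zeroth-order operator on $\vp$. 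In other words, the cancellation lives entirely in the commutator of $T_\beta$ with the combined dispersion $\px|\px|^{1-\alpha}(\vartheta-T_{B^{1-\alpha}})$, not in the commutators with the weight $w^{2j+1}$ or in the time derivative of $\beta$.

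This is why the paper first applies $T_\beta$ to \eqref{paraeqn01} and verifies the cancellation at the level of the equation for $T_\beta\vp$ (see \eqref{betaeqn01} and the computation leading to $\Rc_4,\Rc_5,\Rc_6,\Rc_7$), obtaining a clean evolution for $T_\beta\vp$ with no derivative loss. Only \emph{afterwards} does one apply $|D|^s$ and $(\vartheta-T_{B^{1-\alpha}})^s$; the resulting commutators are then harmless and are handled by Lemma~\ref{lem-DsT} and standard estimates. If you reorganize your Step~3 accordingly --- conjugate by $T_\beta$ first and isolate the curly-bracket combination above --- your outline becomes correct and essentially identical to the paper's.
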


Before proving this proposition, we first state a lemma, whose proof follows directly from definition of Weyl para-product and Kato-Ponce type commutator estimates, and is similar to the proof of Lemma 4.2 in \cite{HSZ18p}.
\begin{lemma}
\label{lem-dtcomm01}
For any positive integer $k$, if $(\vp, \psi)$ is a smooth solution of \eqref{gsqgsys01-expd} and $f \in C_t^1L_x^2$, then
\[
\partial_t (\vartheta - T_{B^{1 - \alpha[\vp]}})^{k} f = (\vartheta_+ - T_{B^{1 - \alpha[\vp]}})^k f_t - k (\vartheta_+ - T_{B^{1 - \alpha[\vp]}})^{k - 1} T_{\partial_t B^{1 - \alpha}[\vp]} f + \tilde{\Rc}_1(f),
\]
where the remainder term satisfies
\[
\left\|\tilde{\Rc}_1(f)\right\|_{H^1} \lesssim \|f\|_{L^2} F\left(\|\vp\|_{W^{4, \infty}}, \|\psi\|_{W^{4, \infty}}\right).
\]
\end{lemma}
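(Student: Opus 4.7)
The plan is to apply the Leibniz rule to expand $\partial_t (\vartheta - T_{B^{1-\alpha}[\vp]})^k f$, then commute the single $\partial_t$-derivative factor to the right so it takes the desired position, collecting all other contributions into the remainder $\tilde{\Rc}_1(f)$.

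The first step is the straightforward product rule. Viewing $t \mapsto T_{B^{1-\alpha}[\vp(\cdot,t)]}$ as a smooth operator-valued curve with $\partial_t T_{B^{1-\alpha}[\vp]} = T_{\partial_t B^{1-\alpha}[\vp]}$, which follows from the linearity of the Weyl paraproduct in its symbol, I obtain
\begin{equation*}
\partial_t (\vartheta - T_{B^{1-\alpha}[\vp]})^k f = (\vartheta - T_{B^{1-\alpha}[\vp]})^k f_t - \sum_{j=0}^{k-1} (\vartheta - T_{B^{1-\alpha}[\vp]})^{k-1-j} T_{\partial_t B^{1-\alpha}[\vp]} (\vartheta - T_{B^{1-\alpha}[\vp]})^j f.
\end{equation*}
Repeatedly applying the identity $T_{\partial_t B^{1-\alpha}[\vp]} (\vartheta - T_{B^{1-\alpha}[\vp]}) = (\vartheta - T_{B^{1-\alpha}[\vp]}) T_{\partial_t B^{1-\alpha}[\vp]} + [T_{B^{1-\alpha}[\vp]}, T_{\partial_t B^{1-\alpha}[\vp]}]$ then moves each $T_{\partial_t B^{1-\alpha}[\vp]}$ past the $j$ factors of $(\vartheta - T_{B^{1-\alpha}[\vp]})$ to its right. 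The ``clean'' contributions (no commutators left) sum to $-k (\vartheta - T_{B^{1-\alpha}[\vp]})^{k-1} T_{\partial_t B^{1-\alpha}[\vp]} f$, matching the stated formula. The remainder $\tilde{\Rc}_1(f)$ is then a finite combination of sandwiched commutators of the form $(\vartheta - T_{B^{1-\alpha}[\vp]})^{p} [T_{B^{1-\alpha}[\vp]}, T_{\partial_t B^{1-\alpha}[\vp]}] (\vartheta - T_{B^{1-\alpha}[\vp]})^{q} f$ with $p + q \le k-2$.

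To control $\tilde{\Rc}_1(f)$ in $H^1$, I invoke Lemma \ref{weyl-comm}: the commutator equals $-i T_{\{B^{1-\alpha}[\vp], \partial_t B^{1-\alpha}[\vp]\}}$ up to a remainder that gains two orders of regularity. Because the $\eta$-dependence of $B^{1-\alpha}[\vp]$ enters only through the cutoff $\chi((2n+1)\eta_j/\xi)$ in \eqref{defB01}, $\partial_\eta B^{1-\alpha}[\vp]$ has order $-1$ in $\eta$, so the Poisson-bracket symbol is of order $-1$ and $T_{\{\cdot,\cdot\}}$ is bounded $L^2 \to H^1$. Since $\vartheta - T_{B^{1-\alpha}[\vp]}$ is of order $0$ and therefore bounded on both $L^2$ and $H^1$ by \eqref{B-est01}, each sandwiched commutator term is bounded $L^2 \to H^1$ with operator norm controlled by the product $\|B^{1-\alpha}[\vp]\|_{\M_{(3,5)}} \|\partial_t B^{1-\alpha}[\vp]\|_{\M_{(3,5)}}$.

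The final task is to bound $\|\partial_t B^{1-\alpha}[\vp]\|_{\M_{(3,5)}}$ by the target $F(\|\vp\|_{W^{4,\infty}}, \|\psi\|_{W^{4,\infty}})$ from \eqref{defF01}. Differentiating $B^{1-\alpha}_n[\vp]$ in $t$, each summand in \eqref{defB01} becomes a sum of $2n$ terms with one factor of $\hat\vp(\eta_j)$ replaced by $\widehat{\vp_t}(\eta_j)$, and I substitute for $\vp_t$ using \eqref{gsqgsys01-expd} to express it as a convergent series of $|\px|^{1-\alpha}\vp_x$, polynomial nonlinearities in $\vp$, and Bessel-multiplier couplings $|\px|^{n+(1-\alpha)/2} K_{n+(1-\alpha)/2}(2h|\px|)$ applied to products of $\vp$ and $\psi$. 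Pointwise control of each factor in $W^{4,\infty}$ follows from Lemma \ref{BesselProp}(v), and summing over $n$ produces an $\M_{(3,5)}$-bound of the form \eqref{defF01}. The main obstacle is precisely this last step: the $\M_{(3,5)}$ bound on $\partial_t B^{1-\alpha}[\vp]$ couples $\vp$ and $\psi$ nonlinearly through the Bessel terms and the full multilinear expansion, so the uniform-in-$n$ summability underlying $F$ is indispensable and is exactly what the smallness hypothesis (ii) in Theorem~\ref{existence01} supplies.
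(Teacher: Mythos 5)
Your skeleton — Leibniz rule, pushing the single factor $T_{\partial_t B^{1-\alpha}[\vp]}$ to the right, and absorbing the resulting sandwiched commutators $[T_{B^{1-\alpha}[\vp]}, T_{\partial_t B^{1-\alpha}[\vp]}]$ into $\tilde\Rc_1(f)$ — is exactly the route the paper gestures at (it gives no proof, citing only the definition of the Weyl para-product, Kato--Ponce-type commutator bounds, and the analogous Lemma 4.2 of the $\alpha=1$ companion paper), and your observation that the $H^1$ gain must come from the commutator, whose Poisson-bracket symbol is of order $-1$ because the $\xi$-dependence of $B^{1-\alpha}[\vp]$ enters only through the cutoffs in \eqref{defB01}, is the right mechanism.

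The genuine gap is in your last step, and you misdiagnose where the difficulty lies. To invoke Lemma \ref{weyl-comm} with $b=\partial_t B^{1-\alpha}[\vp]$ you need $\|\partial_t B^{1-\alpha}[\vp]\|_{\M_{(3,5)}}\lesssim F(\|\vp\|_{W^{4,\infty}},\|\psi\|_{W^{4,\infty}})$. After $\partial_t$ hits one factor, the symbol contains a frequency-truncated copy of $\px\vp_t$, and the $\M_{(3,5)}$ norm allows up to three further $x$-derivatives to fall on that same factor; since the truncation $\chi((2n+1)\eta_j/\xi)$ is an upper cutoff it yields no smoothing uniformly in $\xi$, so you need $\|\partial_x^4\vp_t\|_{L^\infty}$. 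Substituting \eqref{gsqgsys01-expd}, $\vp_t$ contains $|\px|^{1-\alpha}\vp_x$, an operator of order $2-\alpha$, so $\partial_x^4\vp_t$ is of order $6-\alpha>5$ in $\vp$: this is controlled neither by $\|\vp\|_{W^{4,\infty}}$ nor even by $H^s$ with $s=5$. The obstacle is therefore not the summability in $n$ (which condition (ii) of Theorem~\ref{existence01} indeed handles), but the derivative count in $\vp_t$. A correct argument must either settle for a low-regularity bound on the differentiated symbol — e.g. an $\M_{(1,1)}$-type bound, which only requires $\partial_x^2\vp_t$, of order $4-\alpha$, and hence is compatible with $W^{4,\infty}$ control (this is also all that Proposition~\ref{apriori01} actually uses, since there the remainder is only paired against an $L^2$ function) — combined with a commutator estimate demanding correspondingly little of $\partial_t B^{1-\alpha}[\vp]$, or else strengthen the norms entering $F$, in the spirit of the $\alpha=1$ case where the extra quantity $\|L\vp\|_{W^{4,\infty}}$ is tracked precisely to absorb the loss coming from $\vp_t$. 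As written, your appeal to Lemma \ref{weyl-comm} and to Lemma \ref{BesselProp}(v) does not justify the claimed $\M_{(3,5)}$ bound, so the $H^1$ estimate for $\tilde\Rc_1(f)$ is not established.
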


\begin{proof}[Proof of Proposition \ref{apriori01}]
We first observe that $\|\vp\|_{L^2(\R)}$ is conserved by the system, so we only need to estimate the higher-order energies.
By assumption \eqref{initbdd01} on the  initial data and continuity in time, there exists $T > 0$ such that for all $0 \leq t \leq T$,
\[
\sum_{n = 1}^\infty \sum_{\ell = 0}^n \sum_{m = 0}^{2n - \ell} \tilde{C}^n \left(1 + h^{\ell - 2n + \alpha - \frac{5}{2}}\right) \|\vp\|_{W^{4, \infty}}^{2n - \ell - m} \|\psi\|_{W^{4, \infty}}^m < \infty,
\]
and
\[
\left\|T_{\beta[\vp]} \vp\right\|_{H^s} \approx \|\vp\|_{H^s}.
\]

We first apply $T_{\beta[\vp]}$ to equation \eqref{paraeqn01} to obtain
\begin{align}
\label{betaeqn01}
\begin{split}
\partial_t T_{\beta[\vp]} \vp & - \Theta_+\px |\px|^{1 - \alpha} \left(\vartheta - T_{B^{1 - \alpha}[\vp]}\right) T_{\beta[\vp]} \vp +
v \px T_{\beta[\vp]} \vp + \px T_{\Bf[\vp, \psi]} T_{\beta[\vp]} \vp\\
& \quad =\Theta_+ \bigg\{\vartheta \left[T_{\beta[\vp]}, \px |\px|^{1 - \alpha}\right] - \left[T_{\beta[\vp]}, \px |\px|^{1 - \alpha} T_{B^{1 - \alpha}[\vp]}\right] - \px D^{-1} |\px|^{1 - \alpha} T_{B^{-\alpha}[\vp]} T_{\beta[\vp]}\bigg\} \vp\\
& \quad - \left[T_{\beta[\vp]}, \px T_{\Bf[\vp, \psi]}\right] \vp -\Theta_+ \left[T_{\beta[\vp]}, \px D^{-1} |\px|^{1 - \alpha} T_{B^{-\alpha}[\vp]}\right] \vp - \left[T_{\beta[\vp]}, \partial_t\right] \vp\\
& \qquad +
v \left[T_{\beta[\vp]}, \px\right] \vp - T_{\beta[\vp]} \Rc.
\end{split}
\end{align}

We start with the first commutator term in the curly bracket on the right-hand-side of the equation. Using a Taylor expansion, we obtain that
\begin{align*}
&\F \Bigg[\vartheta \left[T_{\beta[\vp]}, \px |\px|^{1 - \alpha}\right] \varphi\Bigg](\xi)\\
 = ~& \vartheta \int_\R \chi\bigg(\frac{|\xi - \eta|}{|\xi + \eta|}\bigg) \tilde{\beta}\bigg(\xi - \eta, \frac{\xi + \eta}{2}\bigg) i \big(\eta |\eta|^{1 - \alpha} - \xi |\xi|^{1 - \alpha}\big) \hat{\vp}(\eta) \diff{\eta}\\
= ~& - \vartheta (2 - \alpha) \int_\R |\xi|^{1 - \alpha} \chi\bigg(\frac{|\xi - \eta|}{|\xi + \eta|}\bigg) i (\xi - \eta) \tilde{\beta}\bigg(\xi - \eta, \frac{\xi + \eta}{2}\bigg) \hat{\vp}(\eta) \diff{\eta} + \Rc_4,
\end{align*}
where $\tilde{a}$ denotes the partial Fourier transform of a symbol $a$ 
defined in \eqref{atilde}, and $\Rc_4$ is a remainder term satisfying
\[
\|\Rc_4\|_{H^s} \lesssim \|\vp\|_{H^s} \sum_{n = 1}^\infty C(n, s) |c_n| \|\vp\|_{W^{4, \infty}}^{2n}.
\]

For the second commutator term in the curly bracket in \eqref{betaeqn01}, we use a Taylor expansion and Lemma \ref{weyl-comm} to obtain
\begin{align*}
& \F\bigg[\left[T_{\beta[\vp]}, \px |\px|^{1 - \alpha} T_{B^{1 - \alpha}[\vp]}\right]\vp\bigg](\xi)\\
= ~& \int_{\R^2} \chi\bigg(\frac{|\xi - \eta|}{|\xi + \eta|}\bigg) \chi\bigg(\frac{|\eta - \zeta|}{|\eta + \zeta|}\bigg) \cdot \bigg[i \eta |\eta|^{1 - \alpha} \tilde{\beta}\bigg(\xi - \eta, \frac{\xi + \eta}{2}\bigg) \widetilde{B^{1 - \alpha}}\bigg(\eta - \zeta, \frac{\eta + \zeta}{2}\bigg)\\
& \hspace{2in} - i \xi |\xi|^{1 - \alpha} \tilde{\beta}\bigg(\eta - \zeta, \frac{\eta + \zeta}{2}\bigg) \widetilde{B^{1 - \alpha}}\bigg(\xi - \eta, \frac{\xi + \eta}{2}\bigg)\bigg] \hat{\vp}(\zeta) \diff{\eta}\diff{\zeta}\\
= ~& \int_{\R^2} i \xi |\xi|^{1 - \alpha} \chi\bigg(\frac{|\xi - \eta|}{|\xi + \eta|}\bigg) \chi\bigg(\frac{|\eta - \zeta|}{|\eta + \zeta|}\bigg) \cdot \bigg[\tilde{\beta}\bigg(\xi - \eta, \frac{\xi + \eta}{2}\bigg) \widetilde{B^{1 - \alpha}}\bigg(\eta - \zeta, \frac{\eta + \zeta}{2}\bigg)\\
& \hspace{2.5in} - \tilde{\beta}\bigg(\eta - \zeta, \frac{\eta + \zeta}{2}\bigg) \widetilde{B^{1 - \alpha}}\bigg(\xi - \eta, \frac{\xi + \eta}{2}\bigg)\bigg] \hat{\vp}(\zeta) \diff{\eta}\diff{\zeta}\\
& \quad - (2 - \alpha) \int_{\R^2} \chi\bigg(\frac{|\xi - \eta|}{|\xi + \eta|}\bigg) \chi\bigg(\frac{|\eta - \zeta|}{|\eta + \zeta|}\bigg) |\xi|^{1 - \alpha} i (\xi - \eta) \tilde{\beta}\bigg(\xi - \eta, \frac{\xi + \eta}{2}\bigg) \widetilde{B^{1 - \alpha}}\bigg(\eta - \zeta, \frac{\eta + \zeta}{2}\bigg) \hat{\vp}(\zeta) \diff{\eta}\diff{\zeta}\\
& \quad + \Rc_5,\\
= ~& \int_\R |\xi|^{1 - \alpha} \chi\bigg(\frac{|\xi - \eta|}{|\xi + \eta|}\bigg) \bigg[\frac{\xi + \eta}{2} \cdot \reallywidetilde{\left\{\beta, B^{1 - \alpha}\right\}}\bigg(\xi - \eta, \frac{\xi + \eta}{2}\bigg)  - (2 - \alpha) \reallywidetilde{(\px \beta) B^{1 - \alpha}}\bigg(\xi - \eta, \frac{\xi + \eta}{2}\bigg)\bigg] \hat{\vp}(\eta) \diff{\eta}\\
& \quad + \Rc_6,
\end{align*}
where
\[
\|\Rc_5\|_{H^s} + \|\Rc_6\|_{H^s} \lesssim \|\vp\|_{H^s} \bigg(\sum_{n = 1}^\infty C(n, s) |c_n| \|\vp\|_{W^{4, \infty}}^{2n}\bigg)^2.
\]

For last term in the curly bracket in \eqref{betaeqn01}, we use Lemma \ref{weyl-comm} again to get
\begin{align*}
& \F\left[\px D^{-1} |\px|^{1 - \alpha} T_{B^{- \alpha}[\vp]} T_{\beta[\vp]} \vp\right](\xi)\\
&\qquad = \int_\R |\xi|^{1 - \alpha} \chi\bigg(\frac{|\xi - \eta|}{|\xi + \eta|}\bigg) i \widetilde{\beta B^{- \alpha}}\bigg(\xi - \eta, \frac{\xi + \eta}{2}\bigg) \hat{\vp}(\eta) \diff{\eta} + \Rc_7,
\end{align*}
where
\[
\|\Rc_7\|_{H^s} \lesssim \|\vp\|_{H^s} \bigg(\sum_{n = 1}^\infty C(n, s) |c_n| \|\vp\|_{W^{4, \infty}}^{2n}\bigg)^2.
\]

Therefore, by invoking the choice of $\beta[\vp]$, we conclude that the operator in the curly bracket in \eqref{betaeqn01} is of order $0$ and the whole term is an error term that satisfies
\begin{align*}
& \bigg\|\bigg\{\vartheta_+ \left[T_{\beta[\vp]}, \px |\px|^{1 - \alpha}\right] - \left[T_{\beta[\vp]}, \px |\px|^{1 - \alpha} T_{B^{1 - \alpha}[\vp]}\right] - \px D^{-1} |\px|^{1 - \alpha} T_{B^{-\alpha}[\vp]} T_{\beta[\vp]}\bigg\} \vp\bigg\|_{H^s}\\
&\qquad \lesssim\|\vp\|_{H^s} \bigg(\sum_{n = 1}^\infty C(n, s) |c_n| \|\vp\|_{W^{4, \infty}}^{2n}\bigg)^2.
\end{align*}
By Kato-Ponce type estimates, \eqref{B-est01}, \eqref{beta-est}, \eqref{Bfest01}, and \eqref{paraRest01}, we also find that
\begin{align*}
& \bigg\|\left[T_{\beta[\vp]}, \px T_{\Bf[\vp, \psi]}\right] \vp\bigg\|_{H^s} + \bigg\|\left[T_{\beta[\vp]}, \px D^{-1} |\px|^{1 - \alpha} T_{B^{-\alpha}[\vp]}\right] \vp\bigg\|_{H^s}\\
& \qquad + \bigg\|\left[T_{\beta[\vp]}, \partial_t\right] \vp\bigg\|_{H^s} + \bigg\|\left[T_{\beta[\vp]}, \px\right] \vp\bigg\|_{H^s} + \left\|T_{\beta[\vp]} \Rc\right\|_{H^s}\\
& \hspace{1in} \lesssim \left(\|\vp\|_{H^s} + \|\psi\|_{H^s}\right) \cdot F(\|\vp\|_{W^{4, \infty}}, \|\psi\|_{W^{4, \infty}}).
\end{align*}

We conclude that equation \eqref{betaeqn01} can be rewritten as
\begin{align*}
\partial_t T_{\beta[\vp]} \vp & - \Theta_+\px |\px|^{1 - \alpha} \left(\vartheta - T_{B^{1 - \alpha}[\vp]}\right) T_{\beta[\vp]} \vp +
v \px T_{\beta[\vp]} \vp + \px T_{\Bf[\vp, \psi]} T_{\beta[\vp]} \vp = \Rc_7,
\end{align*}
where
\[
\|\Rc_7\|_{H^s} \lesssim \left(\|\vp\|_{H^s} + \|\psi\|_{H^s}\right) \cdot F(\|\vp\|_{W^{4, \infty}}, \|\psi\|_{W^{4, \infty}}).
\]

Applying the operator $|D|^s$ to this equation and using Lemma \ref{lem-DsT}, we get that
\begin{align}
\label{Dsbetaeqn01}
\begin{split}
&|D|^s \partial_t T_{\beta[\vp]} \vp - \Theta_+\px |\px|^{1 - \alpha} \left(\vartheta - T_{B^{1 - \alpha}[\vp]}\right) |D|^s T_{\beta[\vp]} \vp +
v \px |D|^s T_{\beta[\vp]} \vp + \px |D|^s T_{\Bf[\vp, \psi]} T_{\beta[\vp]} \vp = \Rc_8,
\end{split}
\end{align}
where
\[
\|\Rc_8\|_{L^2} \lesssim \left(\|\vp\|_{H^s} + \|\psi\|_{H^s}\right) \cdot F(\|\vp\|_{W^{4, \infty}}, \|\psi\|_{W^{4, \infty}}).
\]

Applying $(\vartheta - T_{B^{1 - \alpha}[\vp]})^s$ to \eqref{Dsbetaeqn01} and commuting $(\vartheta - T_{B^{1 - \alpha}[\vp]})^s$ with $\px |\px|^{1 - \alpha}$  up to remainder terms, we obtain that
\begin{align}
\label{Dsbetaeqn01b}
\begin{split}
& \left(\vartheta - T_{B^{1 - \alpha}[\vp]}\right)^s |D|^s \partial_t T_{\beta[\vp]} \vp - \Theta_+\px |\px|^{1 - \alpha} \left(\vartheta - T_{B^{1 - \alpha}[\vp]}\right)^{s + 1} |D|^s T_{\beta[\vp]} \vp\\
& \hspace{.5in} +
v \left(\vartheta - T_{B^{1 - \alpha}[\vp]}\right)^{s + 1} \px |D|^s T_{\beta[\vp]} \vp + \left(\vartheta - T_{B^{1 - \alpha}[\vp]}\right)^s \px |D|^s T_{\Bf[\vp, \psi]} T_{\beta[\vp]} \vp = \Rc_9,
\end{split}
\end{align}
where
\begin{align}
\label{R9est01}
\|\Rc_9\|_{L^2} \lesssim \left(\|\vp\|_{H^s} + \|\psi\|_{H^s}\right) \cdot F(\|\vp\|_{W^{4, \infty}}, \|\psi\|_{W^{4, \infty}}).
\end{align}

By Lemma \ref{lem-dtcomm01}, with $k=2s+1$ and $f = |D|^s T_{\beta[\vp]} \vp$, the time derivative of $E_\vp^{(s)}(t)$ in \eqref{weightedE01} is
\begin{align}
\label{dtE01}
\begin{split}
\frac{\diff}{\diff{t}} E_\vp^{(s)}(t) & = - (2s + 1) \int_\R |D|^s T_{\beta[\vp]} \vp \left(\vartheta - T_{B^{1 - \alpha}[\vp]}\right)^{2s} T_{\partial_t B^{1 - \alpha}[\vp]} |D|^s T_{\beta[\vp]} \vp \diff{x}\\
& \qquad + 2 \int_\R |D|^s T_{\beta[\vp]} \vp \cdot \left(\vartheta - T_{B^{1 - \alpha}[\vp]}\right)^{2s + 1} |D|^s \partial_t \left(T_{\beta[\vp]} \vp\right) \diff{x}\\
& \qquad + \int_\R \tilde{\Rc}_1\left(|D|^s T_{\beta[\vp]} \vp\right) \cdot |D|^s T_{\beta[\vp]} \vp \diff{x}.
\end{split}
\end{align}

Equation \eqref{gsqgsys01-expd} implies that
\[
\|\partial_t \vp_x\|_{L^\infty} \lesssim F\left(\|\vp\|_{W^{4, \infty}}, \|\psi\|_{W^{4, \infty}}\right),
\]
so the first term on the right-hand side of \eqref{dtE01} can be estimated by
\begin{align*}
& \bigg|\int_\R |D|^s T_{\beta[\vp]} \vp \left(\vartheta - T_{B^{1 - \alpha}[\vp]}\right)^{2s} T_{\partial_t B^{1 - \alpha}[\vp]} |D|^s T_{\beta[\vp]} \vp \diff{x}\bigg|\\
& \qquad \lesssim \|\vp\|_{H^s}^2 F\left(\|\vp\|_{W^{4, \infty}}, \|\psi\|_{W^{4, \infty}}\right).
\end{align*}

Using Lemma \ref{lem-dtcomm01}, we can estimate the third term  on the right-hand side of \eqref{dtE01} by
\[
\bigg|\int_\R \tilde{\Rc}_1\left(|D|^s T_{\beta[\vp]} \vp\right) \cdot |D|^s T_{\beta[\vp]} \vp \diff{x}\bigg| \lesssim \left(\|\vp\|_{H^s}^2 + \|\vp\|_{H^s} \|\psi\|_{H^s}\right) F\left(\|\vp\|_{W^{4, \infty}}, \|\psi\|_{W^{4, \infty}}\right).
\]

To estimate the second term on the right-hand side of \eqref{dtE01}, we multiply \eqref{Dsbetaeqn01b} by
\[
\left(\vartheta - T_{B^{1 - \alpha}[\vp]}\right)^{s + 1} |D|^s T_{\beta[\vp]} \vp,
\]
integrate the result with respect to $x$, and use the self-adjointness of $\left(\vartheta_+ - T_{B^{1 - \alpha}[\vp]}\right)^{s + 1}$ to obtain
\[
\int_\R |D|^s T_{\beta[\vp]} \vp \cdot \left(\vartheta - T_{B^{1 - \alpha}[\vp]}\right)^{2s + 1} |D|^{s} \partial_t \left(T_{\beta[\vp]} \vp\right) \diff{x} = \Rm{1} + \Rm{2} + \Rm{3} + \Rm{4},
\]
where
\begin{align*}
\Rm{1} &= - \int_\R |D|^s T_{\beta[\vp]} \vp \cdot \left(\vartheta - T_{B^{1 - \alpha}[\vp]}\right)^{2s + 1} |D|^s \px T_{\Bf[\vp, \psi]} T_{\beta[\vp]} \vp \diff{x},\\
\Rm{2} &= \int_\R \left(\vartheta - T_{B^{1 - \alpha}[\vp]}\right)^{s + 1} |D|^s T_{\beta[\vp]} \vp \cdot \px |\px|^{1 - \alpha} \left(\vartheta - T_{B^{1 - \alpha}[\vp]}\right)^{s + 1} |D|^s T_{\beta[\vp]} \vp \diff{x},\\
\Rm{3} &= \int_\R \left(\vartheta - T_{B^{1 - \alpha}[\vp]}\right)^{s + 1} |D|^s T_{\beta[\vp]} \vp \cdot \Rc_9 \diff{x},\\
\Rm{4} &= -
v \int_\R |D|^s  T_{\beta[\vp]} \vp \cdot \left(\vartheta - T_{B^{1 - \alpha}[\vp]}\right)^{2s + 1} |D|^s \px T_{\beta[\vp]} \vp \diff{x}.
\end{align*}
The last three terms are straightforward to estimate, but the first term requires more work.

Since $\px |\px|^{1 - \alpha}$ is skew-adjoint, we have
\[
\Rm{2} = 0.
\]
By \eqref{R9est01} and the boundedness of
\[
\left(\vartheta_+ - T_{B^{1 - \alpha}[\vp]}\right)^{s + 1}
\]
on $L^2$, we have that
\[
|\Rm{3}| \lesssim \left(\|\vp\|_{H^s}^2 + \|\vp\|_{H^s} \|\psi\|_{H^s}\right)\cdot F\left(\|\vp\|_{W^{4, \infty}}, \|\psi\|_{W^{4, \infty}}\right).
\]
Since $\left(\vartheta_+ - T_{B^{1 - \alpha}[\vp]}\right)^{2s + 1}$ is self-adjoint, we can integrate by parts to obtain
\begin{align*}
\Rm{4} &= - \Rm{4} -
v \int_\R |D|^s  T_{\beta[\vp]} \vp \cdot \left[\left(\vartheta_+ - T_{B^{1 - \alpha}[\vp]}\right)^{2s + 1}, \px\right] |D|^s T_{\beta[\vp]} \vp \diff{x}.
\end{align*}
Using commutator estimates, we have
\[
\bigg|\int_\R |D|^s  T_{\beta[\vp]} \vp \cdot \left[\left(\vartheta_+ - T_{B^{1 - \alpha}[\vp]}\right)^{2s + 1}, \px\right] |D|^s T_{\beta[\vp]} \vp \diff{x}\bigg| \lesssim \|\vp\|_{H^s}^2 F\left(\|\vp\|_{W^{4, \infty}}, \|\psi\|_{W^{4, \infty}}\right),
\]
and we conclude that
\[
|\Rm{4}| \lesssim \|\vp\|_{H^s}^2 F\left(\|\vp\|_{W^{4, \infty}}, \|\psi\|_{W^{4, \infty}}\right).
\]

\noindent {\bf Term} $\Rm{1}$ {\bf estimate.} We write  $\Rm{1} = -\Rm{1}_a + \Rm{1}_b$, where
\begin{align*}
\Rm{1}_a & = \int_{\R} |D|^s T_{\beta[\vp]} \vp \cdot \left(\vartheta_+ - T_{B^{1 - \alpha}[\vp]}\right)^{2s + 1} \partial_x  T_{\Bf[\vp, \psi]} |D|^s T_{\beta[\vp]} \vp \diff{x},\\
 \Rm{1}_b&= \int_{\R} |D|^s T_{\beta[\vp]} \vp \cdot \left(\vartheta_+ - T_{B^{1 - \alpha}[\vp]}\right)^{2s + 1} \partial_x \left[T_{\Bf[\vp, \psi]}, |D|^s\right] T_{\beta[\vp]} \vp \diff{x}.
\end{align*}
By commutator estimates and \eqref{Bfest01}, the second integral satisfies
\[
|\Rm{1}_b| \lesssim \|\vp\|_{H^s}^2 \cdot F\left(\|\vp\|_{W^{4, \infty}}, \|\psi\|_{W^{4, \infty}}\right).
\]
To estimate the integral $\Rm{1}_a$, we write it as
\begin{align*}
\Rm{1}_a &= \Rm{1}_{a_1} - \Rm{1}_{a_2},
\end{align*}
where
\begin{align*}
\Rm{1}_{a_1}& = \int_{\R} |D|^s T_{\beta[\vp]} \vp \cdot \left[\left(\vartheta_+ - T_{B^{1 - \alpha}[\vp]}\right)^{2s + 1}, \partial_x\right] \left(T_{\Bf[\vp, \psi]} |D|^s T_{\beta[\vp]} \vp\right) \diff{x},
\\
\Rm{1}_{a_2}  &= \int_{\R} \px |D|^s T_{\beta[\vp]} \vp \cdot \left(\vartheta_+ - T_{B^{1 - \alpha}[\vp]}\right)^{2s + 1} \left(T_{\Bf[\vp, \psi]} |D|^s T_{\beta[\vp]} \vp\right) \diff{x}.
\end{align*}

\noindent {\bf Term} $\Rm{1}_{a_1}$ {\bf estimate.} A Kato-Ponce commutator estimate and \eqref{Bfest01} gives
\[
|\Rm{1}_{a_1}| \lesssim \|\vp\|_{H^s}^2 \cdot F\left(\|\vp\|_{W^{4, \infty}}, \|\psi\|_{W^{4, \infty}}\right).
\]

\noindent {\bf Term} $\Rm{1}_{a_2}$ {\bf estimate.} We have
\begin{align}
\label{eqBs01}
\begin{split}
\Rm{1}_{a_2} &= \int_\R \left(T_{\Bf[\vp, \psi]} |D|^s T_{\beta[\vp]} \vp\right) \cdot \left(\vartheta_+ - T_{B^{1 - \alpha}[\vp]}\right)^{2s + 1} \px |D|^s T_{\beta[\vp]} \vp \diff{x}\\
&= - \int_\R \partial_x \left(T_{\Bf[\vp, \psi]} |D|^s T_{\beta[\vp]} \vp\right) \cdot \left(\vartheta_+ - T_{B^{1 - \alpha}[\vp]}\right)^{2s + 1} |D|^s T_{\beta[\vp]} \vp \diff{x}\\
& \qquad - \int_\R \left(T_{\Bf[\vp, \psi]} |D|^s T_{\beta[\vp]} \vp\right) \cdot \left[\partial_x, \left(\vartheta_+ - T_{B^{1 - \alpha}[\vp]}\right)^{2s + 1}\right] |D|^s T_{\beta[\vp]} \vp \diff{x}\\
&= - \int_\R \left(T_{\Bf[\vp, \psi]} \px |D|^s T_{\beta[\vp]} \vp + \left[\partial_x, T_{\Bf[\vp, \psi]}\right] |D|^s T_{\beta[\vp]} \vp\right) \cdot \left(\vartheta_+ - T_{B^{1 - \alpha}[\vp]}\right)^{2s + 1} |D|^s T_{\beta[\vp]} \vp \diff{x}\\
& \qquad - \int_\R \left(T_{\Bf[\vp, \psi]} |D|^s T_{\beta[\vp]} \vp\right) \cdot \left[\partial_x, \left(\vartheta_+ - T_{B^{1 - \alpha}[\vp]}\right)^{2s + 1}\right] |D|^s T_{\beta[\vp]} \vp \diff{x}.
\end{split}
\end{align}
Using commutator estimates and \eqref{B-est01}, \eqref{Bfest01}, and \eqref{beta-est}, we get that
\begin{align*}
& \left\|\left[\partial_x, (\vartheta_+ - T_{B^{1 - \alpha}[\vp]})^{2s + 1}\right] |D|^s T_{\beta[\vp]} \vp\right\|_{L^2} + \left\|\left[\partial_x, T_{\Bf[\vp, \psi]}\right] |D|^s T_{\beta[\vp]} \vp\right\|_{L^2}
\lesssim \|\vp\|_{H^s} \cdot F\left(\|\vp\|_{W^{4, \infty}}, \|\psi\|_{W^{4, \infty}}\right),\\
& \left\|\partial_x \left[(\vartheta_+ - T_{B^{1 - \alpha}[\vp]})^{2s + 1}, T_{\Bf[\vp, \psi]}\right] |D|^s T_{\beta[\vp]} \vp\right\|_{L^2} \lesssim \|\vp\|_{H^s} \cdot F\left(\|\vp\|_{W^{4, \infty}}, \|\psi\|_{W^{4, \infty}}\right).
\end{align*}
Since $T_{\Bf[\vp, \psi]}$ is self-adjoint, we can rewrite \eqref{eqBs01} as
\[
\begin{aligned}
\Rm{1}_{a_2}
& = - \Rm{1}_{a_2} + \Rc_{10},
\end{aligned}\]
with
\[
|\Rc_{10}| \lesssim \|\vp\|_{H^s}^2 \cdot F\left(\|\vp\|_{W^{4, \infty}}, \|\psi\|_{W^{4, \infty}}\right),
\]
and we conclude that
\[
|\Rm{1}_{a_2}| \lesssim \|\vp\|_{H^s}^2 \cdot F\left(\|\vp\|_{W^{4, \infty}}, \|\psi\|_{W^{4, \infty}}\right).
\]

This completes the estimate of the terms on the right hand side of \eqref{dtE01}. Collecting the above estimates and using the interpolation inequalities, we obtain that
\begin{align*}
\tilde{E}_\vp^{(s)}(t) \le \tilde{E}_{\vp}^{(s)}(0) + \int_0^t \left(\|\vp\|_{H^s}^2 + \|\vp\|_{H^s} \|\psi\|_{H^s}\right) \cdot F\left(\|\vp\|_{W^{4,\infty}}, \|\psi\|_{W^{4,\infty}}\right) \diff{t'},
\end{align*}
with
\begin{align*}
F\left(\|\vp\|_{W^{4, \infty}}, \|\psi\|_{W^{4, \infty}}\right) \approx \sum_{n = 0}^\infty \sum_{\ell = 0}^n \sum_{m = 0}^{2n - \ell} C(n, s) \left(1 + h^{\ell - 2n + \alpha - \frac{5}{2}}\right) \|\vp\|_{W^{4, \infty}}^{2n - \ell - m} \|\psi\|_{W^{4, \infty}}^m.
\end{align*}

We observe that there exists a constant $\tilde{C}(s) > 0$ such that $C(n, s) \lesssim \tilde{C}(s)^n$. The series in \eqref{defF01} then converges whenever $\|\varphi\|_{W^{4, \infty}} + \|\psi\|_{W^{4, \infty}}$ is sufficiently small, and we can choose $F$ to be an increasing, continuous, real-valued function that satisfies \eqref{defF01}.

Finally, since $\left\|\vartheta_+ - T_{B^{1 - \alpha}[\vp_0]}\right\|_{L^2 \to L^2} \ge m_0$ and
\[
\left\|B^{1 - \alpha}[\vp](\cdot, t)\right\|_{\M_{(3, 5)}},\quad \|T_{\beta[\vp]}\|_{L^2 \to L^2}, \quad F\left(\|\varphi\|_{W^{4, \infty}} + \|\psi\|_{W^{4, \infty}}\right)
\]
are continuous in time, there exist $\Time > 0$ and $m_1, m_2 > 0$, depending only on the initial data, such that
\[
m_1 \|\vp\|_{H^s}^2\leq \tilde{E}_\vp^{(s)} \leq m_2 \|\vp\|_{H^s}^2.
\]
Similar estimates for the second equation of the system with an analogously defined energy $\tilde{E}_\psi^{(s)}$, then give \eqref{apest01}.

\end{proof}

\section{A priori estimates for the two-front SQG ($\alpha = 1)$ systems}
\label{sec:lwp02}
\subsection{Para-differential reduction}
The following result, from Proposition 3.2 in \cite{HSZ18p}, enables us to simplify the system. We recall that $L = \log|\px|$ denotes
the Fourier multiplier with symbol $\log|\xi|$, and we use $C(n,s)$ to denote a generic constant depending only on $n$, $s$, and $h$ that grows at most exponentially in $n$.

\begin{proposition}\label{varphinonlindecomp1}
For $\alpha = 1$, suppose that $\varphi(\cdot, t), \psi(\cdot, t) \in H^s(\R)$ with $s \geq 5$ and $\|\varphi\|_{W^{4, \infty}} + \|L \varphi\|_{W^{4, \infty}}$ and $\|\psi\|_{W^{4, \infty}} + \|L \psi\|_{W^{4, \infty}}$ are sufficiently small. Then we can write
\begin{align*}
& -\sum_{n = 1}^\infty \frac{c_n}{2n + 1} \px \int_{\R^{2n + 1}} \Tb_n(\etab_n) \hat{\varphi}(\eta_1, t) \hat{\varphi}(\eta_2, t) \dotsm \hat{\varphi}(\eta_{2n + 1}, t) e^{i (\eta_1 + \eta_2 + \dotsb + \eta_{2n + 1}) x} \diff{\etab_n}\\
= ~& \px \log|\px| \Big[T_{B^{\log}[\varphi]} \varphi(x, t)\Big] + \px \Big[T_{B^0[\varphi]} \varphi(x, t)\Big] + \Rc_1,
\end{align*}
\begin{align*}
& -\sum_{n = 1}^\infty \frac{c_n}{2n + 1} \px \int_{\R^{2n + 1}} \Tb_n(\etab_n) \hat{\psi}(\eta_1, t) \hat{\psi}(\eta_2, t) \dotsm \hat{\psi}(\eta_{2n + 1}, t) e^{i (\eta_1 + \eta_2 + \dotsb + \eta_{2n + 1}) x} \diff{\etab_n}\\
= ~& \px \log|\px| \Big[T_{B^{\log}[\psi]} \psi(x, t)\Big] + \px \Big[T_{B^0[\psi]} \psi(x, t)\Big] + \Rc_2,
\end{align*}
where the symbols $B^{\log}[f]$ and $B^0[f]$ are defined by
\begin{align}
\label{defBsqg}
\begin{split}
B^{\log}[f](\cdot, \xi) &= \sum_{n = 1}^\infty B^{\log}_n[f](\cdot, \xi), \quad B^0[f](\cdot, \xi) = \sum_{n = 1}^\infty B^0_n[f](\cdot, \xi),\\
B^{\log}_n[f](\cdot, \xi) &= -2 c_n \F_{\zeta}^{-1}\bigg[\int_{\R^{2n}} \delta\bigg(\zeta - \sum_{j = 1}^{2n} \eta_j\bigg) \prod_{j = 1}^{2n} \bigg(i \eta_j \hat{f}(\eta_j) \chi\Big(\frac{(2n + 1) \eta_j}{\xi}\Big)\bigg) \diff{\hat{\etab}_n}\bigg],\\
B^0_n[f](\cdot, \xi) &= 2 c_n \F_{\zeta}^{-1}\bigg[\int_{\R^{2n}} \delta\bigg(\zeta - \sum_{j = 1}^{2n} \eta_j\bigg) \prod_{j = 1}^{2n} \bigg(i \eta_j \hat{f}(\eta_j) \chi\Big(\frac{(2n + 1) \eta_j}{\xi}\Big)\bigg)\\
& \hspace{3in} \cdot \int_{[0, 1]^{2n}} \log\bigg|\sum_{j = 1}^{2n} \eta_j s_j\bigg| \diff{\hat{\s}_n} \diff{\hat{\etab}_n}\bigg].
\end{split}
\end{align}
Here $\chi$ is the cutoff function in the Weyl para-product \eqref{weyldef}, $\hat{\etab}_n = (\eta_1, \eta_2, \dotsc, \eta_{2n})$, and $\hat{\s}_n = (s_1, s_2, \dotsc, s_{2n})$. The operators $T_{B^{\log}[f]}$ and $T_{B^0[f]}$ are self-adjoint and their symbols satisfy the estimates
\begin{equation}
\label{B-est}
\begin{aligned}
\|B^{\log}[f]\|_{\M_{(3, 3)}} &\lesssim \sum_{n = 1}^\infty C(n,s) |c_n| \|f\|_{W^{4, \infty}}^{2n},\\
\|B^{0}[f]\|_{\M_{(3, 3)}} &\lesssim \sum_{n = 1}^\infty  C(n,s) |c_n| \Big(\|Lf\|_{W^{4, \infty}}^{2n} + \|f\|_{W^{4, \infty}}^{2n}\Big),
\end{aligned}
\end{equation}
while the remainder term $\Rc_1$ satisfies
\begin{equation*}
\begin{aligned}
\|\Rc_1\|_{H^s} \lesssim \|\vp\|_{H^s} \left\{\sum_{n = 1}^\infty  C(n, s) |c_n| \Big(\|\varphi\|_{W^{4, \infty}}^{2n} + \|L \varphi\|_{W^{4, \infty}}^{2n}\Big)\right\},\\
\|\Rc_2\|_{H^s} \lesssim \|\psi\|_{H^s} \left\{\sum_{n = 1}^\infty  C(n, s) |c_n| \Big(\|\psi\|_{W^{4, \infty}}^{2n} + \|L \psi\|_{W^{4, \infty}}^{2n}\Big)\right\},
\end{aligned}
\end{equation*}
where the constants $C(n,s)$ have at most exponential growth in $n$.
\end{proposition}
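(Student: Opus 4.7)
The plan is to mimic the structure of the proof of Proposition~\ref{varphinonlindecomp01} for the regime $0<\alpha<1$, adapted to the critical case $\alpha=1$ where the non-integer power $|\eta|^{1-\alpha}$ degenerates into a logarithm. By the symmetry of $\Tb_n$ under permutations, I may assume without loss of generality that $|\eta_{2n+1}| = \max_j |\eta_j|$, so it suffices to study the slice of the integral where this is the case. As in the $0<\alpha<1$ proof, I would insert a partition of unity
\[
1 = \prod_{j=1}^{2n}\Bigl[\chi\bigl(\tfrac{(2n+1)\eta_j}{\eta_{2n+1}}\bigr) + \bigl(1-\chi\bigl(\tfrac{(2n+1)\eta_j}{\eta_{2n+1}}\bigr)\bigr)\Bigr]
\]
and expand. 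Terms containing at least one factor of $1-\chi$ force a low frequency $|\eta_{j_0}|$ to be comparable to the top frequency $|\eta_{2n+1}|$, and the $\alpha=1$ analogue of Lemma~\ref{Tn-roughbdd01} (bound $|\Tb_n|\lesssim \prod|\eta_j| + 1$) will yield an $H^s$ remainder of the required form.

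The main work is in the term where every factor is a $\chi$, which should produce the paraproducts. The idea is to evaluate $\Tb_n$ explicitly by writing
\[
\Tb_n(\etab_n) = - \prod_{j=1}^{2n+1}(i\eta_j)\,\int_{\R}\sgn\zeta \int_{[0,1]^{2n+1}} e^{i(\sum_j \eta_j s_j)\zeta}\,\diff\s_n\,\diff\zeta
\]
and then using the Fourier representation $\int_\R \sgn\zeta\, e^{iz\zeta}\diff\zeta = 2i\,\fp(1/z)$, followed by scaling out $\eta_{2n+1}$ and integrating the variable $s_{2n+1}$. This produces an expression of the schematic form
\[
\Tb_n(\etab_n) = 2\log|\eta_{2n+1}|\prod_{j=1}^{2n}(i\eta_j) - 2\prod_{j=1}^{2n}(i\eta_j)\int_{[0,1]^{2n}}\log\Bigl|\sum_{j=1}^{2n}\eta_j s_j\Bigr|\,\diff\hat\s_n + r_n(\etab_n),
\]
where $r_n$ is a lower-order remainder bounded using a Taylor expansion of $\log|1+A|$ about $A=0$ on $|A|\le 1/10$.

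Substituting this expansion back in and matching with the definition of the Weyl paraproduct will identify the two leading contributions with $\partial_x\log|\partial_x|\,T_{B^{\log}_n[\vp]}\vp$ and $\partial_x\,T_{B^0_n[\vp]}\vp$, respectively, after replacing the symmetric-in-$(\xi,\eta)$ cutoff $\chi(|\xi-\eta|/|\xi+\eta|)$ intrinsic to the Weyl quantization by the cutoffs $\chi((2n+1)\eta_j/\eta_{2n+1})$ — the discrepancy is, as in the prior proposition, supported where a small frequency is comparable to the top frequency and hence goes into $\Rc_1$. The symbol bounds in $\M_{(3,3)}$ are obtained by differentiating $B^{\log}_n$ and $B^0_n$ in $\xi$ and $x$: each $\xi$-derivative either lands on a cutoff $\chi((2n+1)\eta_j/\xi)$ (producing a bounded symbol) or gives a harmless power of $1/|\xi|$, and each $x$-derivative loses one factor of $\eta_j$, hence one $W^{k,\infty}$-derivative on $\vp$.

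The main obstacle I anticipate is the symbol estimate for $B^0$: the logarithm $\log|\sum\eta_j s_j|$ inside $B^0_n$ does not factor, so its $\M_{(3,3)}$ norm cannot be controlled by $\|\vp\|_{W^{4,\infty}}^{2n}$ alone. This is exactly why $B^0$ requires the additional norm $\|L\vp\|_{W^{4,\infty}}$ in \eqref{B-est}; tracking this carefully requires writing $\log|\sum\eta_j s_j|$ as a combination of $\log|\eta_{j^*}|$ (for the largest $j^*$ among $1,\dots,2n$) plus a bounded piece, which lets one absorb the log into the factor $\hat\vp(\eta_{j^*})$ to produce $\widehat{L\vp}(\eta_{j^*})$. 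Verifying self-adjointness of $T_{B^{\log}[\vp]}$ and $T_{B^0[\vp]}$ is then immediate since both symbols are manifestly real-valued (the product $\prod (i\eta_j)$ is real when $2n$ is even, which it is).
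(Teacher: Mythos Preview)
Your proposal is correct. The paper does not give its own proof of this proposition---it simply cites \cite{HSZ18p}, Proposition~3.2---but your plan of adapting the argument of Proposition~\ref{varphinonlindecomp01} to the limiting case $\alpha=1$ (the power $|\cdot|^{1-\alpha}$ degenerating into a logarithm after integrating out $s_{2n+1}$) is exactly the natural route and matches what the cited reference does; all of the ingredients you list, including the mechanism by which $\|L\varphi\|_{W^{4,\infty}}$ enters the $B^0$ estimate, are right.
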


By Proposition \ref{varphipsinonlindecomp01} and Proposition \ref{varphinonlindecomp1}, we can write \eqref{sqgsys-expd1} in the following paralinearized form
\begin{equation}\label{fsqgeq}
\begin{aligned}
&\vp_t - (\Theta_+ - \Theta_-) (\gamma + \log{h}) \vp_x + T_{\Bff_\vp} \vp_x+ \Rc_1 + 2 \Theta_- K_0(2 h |\px|) \psi_x = \Theta_+ L \big[(2 - T_{B^{\log}[\vp]}) \vp\big]_x,\\
&\psi_t + (\Theta_+ - \Theta_-) (\gamma + \log{h}) \psi_x + T_{\Bff_\psi} \psi_x+ \Rc_2 + 2 \Theta_+ K_0(2 h |\px|) \vp_x = \Theta_- L \big[(2 - T_{B^{\log}[\psi]}) \psi\big]_x,\\
\end{aligned}
\end{equation}
where
\begin{align*}
\Bff_\vp= \Theta_- \sum_{n = 1}^\infty \sum_{\ell = 0}^n (2n - \ell + 1) d_{n, \ell, 0, 1} \varphi^{2n - \ell} +\Theta_-\Bff_1[\vp, \psi]+\Theta_+B^0[\vp],\\
\Bff_\psi= \Theta_+ \sum_{n = 1}^\infty \sum_{\ell = 0}^n (2n - \ell + 1) d_{n, \ell, 0, 1} \psi^{2n - \ell} +\Theta_+\Bff_1[\psi, \vp]+\Theta_-B^0[\psi],
\end{align*}
and $\Rc_1$ and $\Rc_2$ are bounded by
\begin{align}\label{remterm}
\|\Rc_i\|_{H^s}\lesssim (\|\vp\|_{H^s}+\|\psi\|_{H^s}) F(\|\vp\|_{W^{4,\infty}}+\|L\vp\|_{W^{4,\infty}}+\|\psi\|_{W^{4,\infty}}+\|L\psi\|_{W^{4,\infty}}), \quad i=1,2,
\end{align}
where $F$ is a positive polynomial.

\subsection{Energy estimates and local existence}
We can therefore define homogeneous and nonhomogeneous weighted energies that are equivalent to the $H^s$-energies by
\begin{align*}
&E^{(j)}(t) = \int_\R |\Theta_+| |D|^j \varphi(x, t) \cdot \Big(2 - T_{B^{\log}[\vp]}\Big)^{2j + 1} |D|^j \vp(x, t) + |\Theta_-| |D|^j \psi(x, t) \cdot \Big(2 - T_{B^{\log}[\psi]}\Big)^{2j + 1} |D|^j \psi(x, t) \diff{x},\\
&\tilde{E}^{(s)}(t) = \|\vp\|_{L^2(\R)}^2 + \|\psi\|_{L^2(\R)}^2+ \sum_{j=1}^s E^{(j)}(t).
\end{align*}
For simplicity, we consider only integer norms with $s\in \N$.

We now are ready to prove the following \emph{a priori} estimates.
\begin{proposition}\label{apriori}
Let $s \geq 5$ be an integer and $\vp$, $\psi$ a smooth solution of \eqref{fsqgeq} with $\vp_0, \psi_0 \in {H}^s(\R)$.
There exists a constant $\tilde{C} > 1$, depending only on $s$, such that if
$\vp_0, \psi_0$ satisfies
\[
\big\|T_{B^{\log}[\vp_0]}\|_{L^2 \to L^2} \leq C,\qquad
\sum_{n = 1}^\infty \tilde{C}^n |c_n| \Big(\|\varphi_0\|_{W^{4, \infty}}^{2n} + \|L \varphi_0\|_{W^{4, \infty}}^{2n}\Big) < \infty,
\]
\[
\big\|T_{B^{\log}[\psi_0]}\|_{L^2 \to L^2} \leq C,\qquad
\sum_{n = 1}^\infty \tilde{C}^n |c_n| \Big(\|\psi_0\|_{W^{4, \infty}}^{2n} + \|L \psi_0\|_{W^{4, \infty}}^{2n}\Big) < \infty,
\]
for some constant $0 < C < 2$, then there exists a time $\Time > 0$ such that
\[
\big\|T_{B^{\log}[\vp(t)]}\|_{L^2 \to L^2} < 2,\qquad
\sum_{n = 1}^\infty \tilde C^n |c_n| \Big(\|\varphi(t)\|_{W^{4, \infty}}^{2n} + \|L \varphi(t)\|_{W^{4, \infty}}^{2n}\Big) < \infty,
\]
\[
\big\|T_{B^{\log}[\psi(t)]}\|_{L^2 \to L^2} < 2,\qquad
\sum_{n = 1}^\infty \tilde C^n |c_n| \Big(\|\psi(t)\|_{W^{4, \infty}}^{2n} + \|L \psi(t)\|_{W^{4, \infty}}^{2n}\Big) < \infty,
\]
for all $t\in [0,T]$, and
\begin{equation}
\frac{\diff}{\diff{t}} \tilde{E}^{(s)}(t) \le  F\left(\|\vp\|_{W^{4,\infty}} + \|L \vp\|_{W^{4,\infty}}+\|\psi\|_{W^{4,\infty}} + \|L \psi\|_{W^{4,\infty}}\right) \tilde{E}^{(s)}(t),
\label{apest}
\end{equation}
where $F(\cdot)$ is an increasing, continuous, real-valued function.
\end{proposition}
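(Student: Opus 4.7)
The plan is to mirror the scheme used for Proposition~\ref{apriori01}, specialized to $\alpha=1$ where the dispersive symbol $|\xi|^{1-\alpha}$ is replaced by $\log|\xi|$ and the weight $\vartheta-T_{B^{1-\alpha}[\vp]}$ becomes $2-T_{B^{\log}[\vp]}$. First, by continuity in $t$ of the norms $\|B^{\log}[\vp(t)]\|_{\M_{(3,3)}}$, $\|B^{\log}[\psi(t)]\|_{\M_{(3,3)}}$ and of $\|\vp(t)\|_{W^{4,\infty}}+\|L\vp(t)\|_{W^{4,\infty}}$ (and the analog for $\psi$), hypotheses (i)--(ii) at $t=0$ persist on some maximal interval $[0,T]$. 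On this interval, $2-T_{B^{\log}[\vp]}$ is invertible on $L^2$ and the quantities $\tilde{E}^{(s)}(t)$ are equivalent to $\|\vp\|_{H^s}^2+\|\psi\|_{H^s}^2$, the multilinear series defining $B^{\log}$, $B^0$, $\Bff_\vp$, $\Bff_\psi$ converge, and the remainder bound \eqref{remterm} applies.

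Next, for each $1\le j\le s$ I would apply $|D|^j$ to the first equation of \eqref{fsqgeq}, use Lemma~\ref{lem-DsT} to commute $|D|^j$ past the paraproducts $T_{\Bff_\vp}$, $T_{B^{\log}[\vp]}$, and then multiply the resulting identity by $|\Theta_+|(2-T_{B^{\log}[\vp]})^{2j+1}|D|^j\vp$, integrating in $x$. Using self-adjointness of $(2-T_{B^{\log}[\vp]})^{2j+1}$ and combining with the $\psi$-equation treated symmetrically, we obtain
\[
\tfrac{1}{2}\tfrac{d}{dt}E^{(j)}(t)=\mathrm{I}+\mathrm{II}+\mathrm{III}+\mathrm{IV}+\mathrm{V},
\]
where $\mathrm{I}$ is the dispersive contribution coming from $\Theta_\pm L\partial_x[(2-T_{B^{\log}})\cdot]$, $\mathrm{II}$ gathers the transport-type paraproducts $T_{\Bff_{\vp,\psi}}\partial_x$ and the constant-coefficient transport $(\Theta_+-\Theta_-)(\gamma+\log h)\partial_x$, $\mathrm{III}$ is the contribution of $\partial_t$ applied to the weight, $\mathrm{IV}$ is the cross-coupling via $K_0(2h|\partial_x|)\partial_x$, and $\mathrm{V}$ collects commutators and the remainder $\Rc_1,\Rc_2$.

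For $\mathrm{I}$, since $\partial_x L$ is skew-adjoint on $L^2$ and $(2-T_{B^{\log}[\vp]})^{j+1}$ is self-adjoint, the leading-order quadratic cancels after commuting one copy of the weight past $\partial_x L$; the residual $[\partial_x L,(2-T_{B^{\log}[\vp]})^{j+1}]$ is of order zero by Lemma~\ref{weyl-comm} together with \eqref{B-est}, and contributes $\lesssim \|\vp\|_{H^s}^2\cdot F$. This is the SQG analog of the cancellation of Term $\mathrm{II}$ in the proof of Proposition~\ref{apriori01} and is what makes the logarithmic loss of derivatives manageable. For $\mathrm{II}$, the self-adjointness of $T_{\Bff_\vp}$ allows symmetrization and integration by parts in $\partial_x$ so that only commutators $[\partial_x,T_{\Bff_\vp}]$ and $[(2-T_{B^{\log}[\vp]})^{2j+1},\partial_x T_{\Bff_\vp}]$ remain, both controlled by Kato--Ponce and \eqref{B-est} by $\|\vp\|_{H^s}^2\cdot F$. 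Term $\mathrm{IV}$ is harmless because by Lemma~\ref{BesselProp}(iv) the Fourier multiplier $K_0(2h|\partial_x|)$ is exponentially smoothing at high frequency and logarithmically singular only at $\xi=0$, so $K_0(2h|\partial_x|)\partial_x$ maps $L^2\to L^2$ with norm depending only on $h$; pairing with $|D|^j\vp$ yields $\lesssim\|\vp\|_{H^j}\|\psi\|_{H^j}\cdot F$. Term $\mathrm{V}$ is bounded by \eqref{remterm} and standard commutator estimates.

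The main obstacle is $\mathrm{III}$. The time derivative produces $-(2j+1)\int|D|^j\vp\cdot(2-T_{B^{\log}[\vp]})^{2j}T_{\partial_t B^{\log}[\vp]}|D|^j\vp\,\diff x$, and we need an SQG analog of Lemma~\ref{lem-dtcomm01} bounding $\|\partial_t B^{\log}[\vp]\|_{\M_{(1,1)}}$ by $F(\|\vp\|_{W^{4,\infty}}+\|L\vp\|_{W^{4,\infty}}+\|\psi\|_{W^{4,\infty}}+\|L\psi\|_{W^{4,\infty}})$. Since $B^{\log}[\vp]$ is multilinear in $\vp$ and its derivatives via the explicit formulas \eqref{defBsqg}, each occurrence of $\vp_t$ is replaced using \eqref{fsqgeq}, producing a term of the form $L\vp_x$ together with lower-order paraproduct contributions that are bounded in $W^{4,\infty}$ by the arguments of $F$ (this is exactly where the appearance of $\|L\vp\|_{W^{4,\infty}}$ in the hypothesis is needed). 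After summing over $0\le j\le s$, combining the $\vp$ and $\psi$ estimates, and using interpolation to absorb intermediate $W^{4,\infty}$ norms into $H^s$ for $s\ge 5$, we obtain the differential inequality \eqref{apest} with $F$ an increasing continuous function chosen to dominate the convergent series appearing in \eqref{B-est} and in the bounds for $\Bff_{\vp},\Bff_\psi,\Rc_1,\Rc_2$.
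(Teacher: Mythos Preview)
Your proposal is correct and follows essentially the same approach as the paper: the weighted energy built from $(2-T_{B^{\log}})^{2j+1}$, the cancellation of the dispersive term via skew-adjointness of $\partial_x L$, the symmetrization of the $T_{\Bff_\vp}\partial_x$ transport term, the $L^2\to L^2$ boundedness of $K_0(2h|\partial_x|)\partial_x$ for the coupling, and bounding $\partial_t B^{\log}[\vp]$ from the equation. The only organizational difference is that the paper first applies $(2-T_{B^{\log}[\vp]})^s$ to the $|D|^s$-differentiated equation and then pairs with $(2-T_{B^{\log}[\vp]})^{s+1}|D|^s\vp$, so the dispersive contribution $\int (2-T_{B^{\log}})^{s+1}|D|^s\vp\cdot\partial_xL(2-T_{B^{\log}})^{s+1}|D|^s\vp\,\diff x$ vanishes identically without any commutator correction; in your ordering you need to move $j$ copies of the weight across via self-adjointness and commute them through $\partial_x L$, which is harmless but slightly less clean.
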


\begin{proof}
Observe that $\|\vp\|_{L^2(\R)}^2 + \|\psi\|_{L^2(\R)}^2$ is conserved by the system. So we only need to estimate the higher-order energy.
By direct calculation, for $f=\vp$ or $\psi$,
\begin{equation}\label{weighted12}
\pt (2 - T_{B^{\log}[f]})^s f = (2 - T_{B^{\log}[f]})^s f_t - s (2 - T_{B^{\log}[f]})^{s - 1} T_{\pt B^{\log}[f]} \psi + \Rc(f),
\end{equation}
where the remainder term $\Rc$ is bounded by \eqref{remterm}.

By continuity in time, there exists $T>0$ such that
\[
\sum_{n = 1}^\infty \tilde C^n |c_n| \Big(\|\varphi(t)\|_{W^{4, \infty}}^{2n} + \|L \varphi(t)\|_{W^{4, \infty}}^{2n}+\|\psi(t)\|_{W^{4, \infty}}^{2n} + \|L \psi(t)\|_{W^{4, \infty}}^{2n}\Big) < \infty
\qquad\text{for all $0\le t \le T$}.
\]

We apply the operator $|D|^s$ to the first equation of \eqref{fsqgeq} to get
\begin{equation}\label{Dseqn}
\begin{aligned}
|D|^s \vp_t & - (\Theta_+ - \Theta_-) (\gamma + \log{h}) |D|^s\vp_x+
|D|^s T_{\Bff_\vp} \vp_x \\
& +|D|^s \Rc_1+ 2 \Theta_- |D|^s K_0(2 h |\px|) \psi_x(x, t) = |D|^s \px L  \big[(2 - T_{B^{\log}[\vp]}) \vp\big].
\end{aligned}\end{equation}
Using Lemma \ref{lem-DsT}, we find that
\[
\begin{aligned}
|D|^s\left[(2 - T_{B^{\log}[\vp]}) \vp\right]&=2|D|^s\vp-|D|^s(T_{B^{\log}[\vp]} \vp)\\
&=2|D|^s\vp-T_{B^{\log}[\vp]} |D|^s\vp + sT_{\px B^{\log}[\vp]} |D|^{s-2} \vp_x + \Rc_{3},
\end{aligned}
\]
where
\[
\|\px \Rc_{3}\|_{L^2} \lesssim \bigg(\sum_{n = 1}^\infty C(n,s) |c_n| \|\varphi\|_{W^{4, \infty}}^{2n}\bigg) \|\vp\|_{H^{s-1}} .
\]
Thus, we can write the right-hand side of \eqref{Dseqn} as
\[
\begin{aligned}
\px L |D|^s & \left[(2 - T_{B^{\log}[\vp]}) \vp\right]\\
& = \px L \left[(2 - T_{B^{\log}[\vp]}) |D|^s\vp + s T_{\px B^{\log}[\vp]} |D|^{s-2} \vp_x\right]+\Rc_{4}\\
& =L\big\{(2 - T_{B^{\log}[\vp]}) |D|^s\vp_x - T_{\px B^{\log}[\vp]} |D|^s\vp - s T_{\px B^{\log}[\vp]} |D|^{s} \vp \big\} + \Rc_{4}\\
&= L\left\{(2 - T_{B^{\log}[\vp]}) |D|^s\vp_x - (s + 1) T_{\px B^{\log}[\vp]} |D|^s \vp \right\} + \Rc_{4},
\end{aligned}\]
where
\[
\|\Rc_4\|_{L^2}\lesssim \bigg(\sum_{n = 1}^\infty C(n,s) |c_n| \Big(\|\varphi\|_{W^{4, \infty}}^{2n} + \|L \varphi\|_{W^{4, \infty}}^{2n}\Big)\bigg) \|\vp\|_{H^s}.
\]

Applying $(2 - T_{B^{\log}[\vp]})^s$ to \eqref{Dseqn}, and
commuting $(2 - T_{B^{\log}[\vp]})^s$ with $L$ up to remainder
terms, we obtain that
\begin{equation}
\label{JDseqn}
\begin{aligned}
(2 - T_{B^{\log}[\vp]})^s & |D|^s \vp_t - (\Theta_+ - \Theta_-) (\gamma + \log{h}) (2 - T_{B^{\log}[\vp]})^s|D|^s\vp_x\\
&+ (2 - T_{B^{\log}[\vp]})^s \px |D|^s T_{\Bff_\vp} \vp+ 2 \Theta_-  (2 - T_{B^{\log}[\vp]})^s |D|^s K_0(2 h |\px|) \psi_x(x, t) \\
&\qquad = L\left\{(2 - T_{B^{\log}[\vp]})^{s + 1} |D|^s\vp_x - (s + 1) (2 - T_{B^{\log}[\vp]})^s T_{\px B^{\log}[\vp]} |D|^s \vp \right\} + \Rc_{5}\\
&\qquad=  \px L\left\{(2 - T_{B^{\log}[\vp]})^{s + 1} |D|^s \vp\right\} + \Rc_{5},
\end{aligned}
\end{equation}
where $\|\Rc_{5}\|_{L^2}$ is bounded by the right-hand-side of \eqref{remterm}.

By \eqref{weighted12}, the time derivative of $E^{(s)}(t)$ is
\bel{dtE}\begin{aligned}
\frac{\diff}{\diff{t}} E^{(s)}(t) & = - \int_{\R} (2s+1) |D|^s\vp\cdot (2 - T_{B^{\log}[\vp]})^{2s} T_{\pt B^{\log}[\vp]} |D|^s\vp\diff{x}\\
& + 2\int_{\R}|D|^s\vp \cdot (2 - T_{B^{\log}[\vp]})^{2s+1} |D|^s\vp_t\diff{x} + \int_{\R}\Rc\left(|D|^s\vp\right) |D|^s\vp \diff{x}.
\end{aligned}\eeq
We will estimate each of the terms on the right-hand side of \eqref{dtE}.

Equation \eqref{fsqgeq} implies that
\[
\|\vp_{xt}\|_{L^\infty} \lesssim \sum_{n = 1}^\infty C(n,s) |c_n| \Big(\|\varphi\|_{W^{4, \infty}}^{2n} + \|L \varphi\|_{W^{4, \infty}}^{2n}+\|\psi\|_{W^{4, \infty}}^{2n} + \|L \psi\|_{W^{4, \infty}}^{2n}\Big),\]
so the first term on the right-hand side of \eqref{dtE} can be estimated by
\[
\begin{aligned}
&\left| \int_{\R} (2s+1)|D|^s\vp \cdot (2 - T_{B^{\log}[\vp]})^{2s} T_{\pt B^{\log}[\vp]} |D|^s\vp\diff{x} \right|\\
&\qquad\lesssim  \bigg(\sum_{n = 1}^\infty C(n,s) |c_n| \Big(\|\varphi\|_{W^{4, \infty}}^{2n} + \|L \varphi\|_{W^{4, \infty}}^{2n}+\|\psi\|_{W^{4, \infty}}^{2n} + \|L \psi\|_{W^{4, \infty}}^{2n}\Big)\bigg) \|\vp\|_{H^s}^2.
\end{aligned}
\]
We can estimate the third term  on the right-hand side of \eqref{dtE} by
\begin{align*}
 \int_{\R}\Rc \left(|D|^s\vp\right) |D|^s\vp \diff{x} \lesssim  \bigg(\sum_{n = 1}^\infty C(n,s) |c_n| \Big(\|\varphi\|_{W^{4, \infty}}^{2n} + \|L \varphi\|_{W^{4, \infty}}^{2n}+\|\psi\|_{W^{4, \infty}}^{2n} + \|L \psi\|_{W^{4, \infty}}^{2n}\Big)\bigg) \|\vp\|_{H^s} \|\vp\|_{H^{s - 1}}.
 \end{align*}

To estimate the second term  on the right-hand side \eqref{dtE}, we multiply \eqref{JDseqn} by $(2 - T_{B^{\log}[\vp]})^{s+1} |D|^s \vp$, integrate the result with respect to $x$, and use the self-adjointness of
$(2 - T_{B^{\log}[\vp]})^{s + 1}$, which gives
\[
\int_{\R} |D|^s \vp \cdot (2 - T_{B^{\log}[\vp]})^{2s + 1} |D|^s \vp_t \diff{x}
= \Rm{1} + \Rm{2} + \Rm{3}+\Rm{4},
\]
where
\begin{align*}
\Rm{1} &= - \int_{\R} |D|^s\vp\cdot (2 - T_{B^{\log}[\vp]})^{2s+1} |D|^s \partial_x T_{\Bff_\vp} \vp \diff{x},
\\
\Rm{2} &= \int_\R (2 - T_{B^{\log}[\vp]})^{s+1} |D|^s \vp \cdot \px L (2 - T_{B^{\log}[\vp]})^{s+1} |D|^s \vp \diff{x},
\\
\Rm{3}&=\int _\R(2 - T_{B^{\log}[\vp]})^{s+1} |D|^s \vp \cdot \left(\Rc_{1}+2\Theta_-K_0(2h|\px|)\psi_x \right)\diff{x},\\
\Rm{4}&= - \int_{\R} |D|^s\vp \cdot (\Theta_+ - \Theta_-) (\gamma + \log{h}) (2 - T_{B^{\log}[\vp]})^{2s+1}|D|^{s}\vp_x.
\end{align*}
We have $\Rm{2}=0$, since $\partial_x L$ is skew-symmetric, and
\[
|\Rm{3}| \lesssim  \bigg(\sum_{n = 0}^\infty C(n,s) |c_n| \Big(\|\varphi\|_{W^{4, \infty}}^{2n} + \|L \varphi\|_{W^{4, \infty}}^{2n}+\|\psi\|_{W^{4, \infty}}^{2n} + \|L \psi\|_{W^{4, \infty}}^{2n}\Big)\bigg) (\|\vp\|_{H^s}^2+\|\psi\|_{H^s}^2).
\]

Because $(2-T_{B^{\log}[\vp]})$ is self-adjoint,
\begin{align*}
\Rm{4}&= - (\Theta_+ - \Theta_-) (\gamma + \log{h}) \int_{\R} (2 - T_{B^{\log}[\vp]})^{2s+1} |D|^s\vp\cdot  |D|^{s}\vp_x\diff x\\
&= (\Theta_+ - \Theta_-) (\gamma + \log{h}) \int_{\R} \partial_x(2 - T_{B^{\log}[\vp]})^{2s+1} |D|^s\vp\cdot  |D|^{s}\vp\diff x\\
&=-\Rm{4} + (\Theta_+ - \Theta_-) (\gamma + \log{h}) \int_{\R} [\partial_x, (2 - T_{B^{\log}[\vp]})^{2s+1}] |D|^s\vp\cdot  |D|^{s}\vp\diff x.
\end{align*}
By a commutator estimate,
\[
\left|\int_{\R} [\partial_x, (2 - T_{B^{\log}[\vp]})^{2s+1}] |D|^s\vp\cdot  |D|^{s}\vp\diff x\right|\lesssim \|\vp\|_{H^s}^2 F(\|\vp\|_{W^{4,\infty}} + \|L \vp\|_{W^{4,\infty}}).
\]
Therefore
\[
|\Rm{4}|\lesssim \|\vp\|_{H^s}^2 F(\|\vp\|_{W^{4,\infty}} + \|L \vp\|_{W^{4,\infty}}).
\]

\noindent {\bf Term} $\Rm{1}$ {\bf estimate.} We write  $\Rm{1} = -\Rm{1}_a + \Rm{1}_b$, where
\begin{align*}
\Rm{1}_a & =\int_{\R}|D|^s\vp\cdot (2 - T_{B^{\log}[\vp]})^{2s+1} \partial_x  T_{\Bff_\vp} |D|^s \vp \diff{x},\\
 \Rm{1}_b&= \int_{\R} |D|^s\vp\cdot (2 - T_{B^{\log}[\vp]})^{2s+1} \partial_x [T_{\Bff_\vp}, |D|^s] \vp \diff{x}.
\end{align*}
By a commutator estimate and \eqref{B-est}, the second integral satisfies
\[
|\Rm{1}_b| \lesssim  \bigg(\sum_{n = 1}^\infty C(n,s) |c_n| \Big(\|\varphi\|_{W^{4, \infty}}^{2n} + \|L \varphi\|_{W^{4, \infty}}^{2n}+\|\psi\|_{W^{4, \infty}}^{2n} + \|L \psi\|_{W^{4, \infty}}^{2n}\Big)\bigg) \|\vp\|_{H^s}^2.
\]
To estimate the first integral, we write it as
\begin{align*}
\Rm{1}_a &= \Rm{1}_{a_1} - \Rm{1}_{a_2},
\end{align*}
where
\begin{align*}
\Rm{1}_{a_1}& = \int_{\R} |D|^s \vp \cdot [(2 - T_{B^{\log}[\vp]})^{2s + 1}, \partial_x] \left(T_{\Bff_\vp} |D|^s \vp\right) \diff{x},
\\
\Rm{1}_{a_2}  &= \int_{\R} |D|^s\vp_x \cdot(2 - T_{B^{\log}[\vp]})^{2s+1} \left(T_{\Bff_\vp} |D|^s\vp\right) \diff{x}.
\end{align*}

\noindent {\bf Term} $\Rm{1}_{a_1}$ {\bf estimate.} A Kato-Ponce commutator estimate and \eqref{B-est} gives
\[
|\Rm{1}_{a_1}| \lesssim  \bigg(\sum_{n = 1}^\infty C(n,s) |c_n| \Big(\|\varphi\|_{W^{4, \infty}}^{2n} + \|L \varphi\|_{W^{4, \infty}}^{2n}+\|\psi\|_{W^{4, \infty}}^{2n} + \|L \psi\|_{W^{4, \infty}}^{2n}\Big)\bigg) \|\vp\|_{H^s}^2.
\]

\noindent {\bf Term} $\Rm{1}_{a_2}$ {\bf estimate.} We have
\bel{eqBs}\begin{aligned}
\Rm{1}_{a_2}
&=\int_{\R} \left(T_{B^{0}[\vp]} |D|^s \vp\right) \cdot (2 - T_{B^{\log}[\vp]})^{2s + 1} |D|^s \vp_x \diff{x}\\
&=\int_{\R} \left(T_{B^0[\vp]} |D|^s \vp\right) \cdot \left\{\partial_x\left((2 - T_{B^{\log}[\vp]})^{2s + 1} |D|^s \vp\right)-\left[\partial_x, (2 - T_{B^{\log}[\vp]})^{2s + 1}\right] |D|^s \vp \right\}\diff{x}\\
&=- \int_{\R} \partial_x \left(T_{B^0[\vp]} |D|^s \vp\right) \cdot (2 - T_{B^{\log}[\vp]})^{2s + 1} |D|^s \vp \diff{x}\\
& \qquad - \int_\R \left(T_{B^0[\vp]} |D|^s \vp\right) \cdot \left[\partial_x, (2 - T_{B^{\log}[\vp]})^{2s + 1}\right] |D|^s \vp \diff{x}\\
&=- \int_{\R} \left(T_{B^0[\vp]} |D|^s \vp_x + \left[\partial_x, T_{B^0[\vp]}\right] |D|^s \vp\right) \cdot (2 - T_{B^{\log}[\vp]})^{2s + 1} |D|^s \vp \diff{x}\\
& \qquad - \int_\R \left(T_{B^0[\vp]} |D|^s \vp\right) \cdot \left[\partial_x, (2 - T_{B^{\log}[\vp]})^{2s + 1}\right] |D|^s \vp \diff{x}.
\end{aligned}\eeq
Using commutator estimates and \eqref{B-est}, we get that
\[
\begin{aligned}
\left\|\left[\partial_x,T_{B^0[\vp]}\right] |D|^s\vp\right\|_{L^2} &\lesssim \bigg(\sum_{n = 1}^\infty C(n,s) |c_n| \left(\|\vp\|_{W^{4,\infty}}^2 + \|L\vp\|_{W^{4,\infty}}^2\right)\bigg) \|\vp\|_{H^s},\\
\left\|\left[\partial_x,(2 - T_{B^{\log}[\vp]})^{2s+1}\right] |D|^s\vp\right\|_{L^2} &\lesssim  \bigg(\sum_{n = 1}^\infty C(n,s) |c_n| \Big(\|\varphi\|_{W^{4, \infty}}^{2n} + \|L \varphi\|_{W^{4, \infty}}^{2n}\Big)\bigg) \|\vp\|_{H^s},\\
\left\|\partial_x \left[(2 - T_{B^{\log}[\vp]})^{2s+1}, T_{B^0[\vp]}\right]|D|^s\vp \right\|_{L^2} &\lesssim  \bigg(\sum_{n = 1}^\infty C(n,s) |c_n| \Big(\|\varphi\|_{W^{4, \infty}}^{2n} + \|L \varphi\|_{W^{4, \infty}}^{2n}\Big)\bigg) \|\vp\|_{H^s}^2.
\end{aligned}
\]
Since $T_{B^0[\vp]}$ is self-adjoint, we can rewrite \eqref{eqBs} as
\[
\begin{aligned}
\Rm{1}_{a_2}
& = - \Rm{1}_{a_2} + \Rc_{6},
\end{aligned}\]
with
\[
|\Rc_{6}| \lesssim  \bigg(\sum_{n = 1}^\infty C(n,s) |c_n| \Big(\|\varphi\|_{W^{4, \infty}}^{2n} + \|L \varphi\|_{W^{4, \infty}}^{2n}+\|\psi\|_{W^{4, \infty}}^{2n} + \|L \psi\|_{W^{4, \infty}}^{2n}\Big)\bigg) \|\vp\|_{H^s}^2,
\]
and we conclude that
\[
|\Rm{1}_{a_2}| \lesssim  \bigg(\sum_{n = 1}^\infty C(n,s) |c_n| \Big(\|\varphi\|_{W^{4, \infty}}^{2n} + \|L \varphi\|_{W^{4, \infty}}^{2n}+\|\psi\|_{W^{4, \infty}}^{2n} + \|L \psi\|_{W^{4, \infty}}^{2n}\Big)\bigg) \|\vp\|_{H^s}^2.
\]

By a similar procedure, we can obtain the estimate for $\psi$. This completes the estimate of the terms on the right hand side of \eqref{dtE}. Collecting the above estimates and using the interpolation inequalities, we obtain that
\begin{equation}
\tilde{E}^{(s)}(t) \le \tilde{E}^{(s)}(0) + \int_0^t F\left(\|\vp\|_{W^{4,\infty}} + \|L \vp\|_{W^{4,\infty}}+\|\psi\|_{W^{4, \infty}} + \|L \psi\|_{W^{4, \infty}}\right) \|\vp\|_{H^s}^2 \diff{t'},
\label{apest1}
\end{equation}
where $F$ is a positive, increasing, continuous, real-valued function.

We observe that there exists a constant $\tilde C(s) > 0$ such that $C(n,s)\lesssim \tilde C(s)^n$. The series in $F$ then converges whenever $\|\varphi\|_{W^{4, \infty}} + \|L\varphi\|_{W^{4, \infty}}+\|\psi\|_{W^{4, \infty}}^{2n} + \|L \psi\|_{W^{4, \infty}}^{2n}$ is sufficiently small, and we can choose $F$ to be an increasing, continuous, real-valued function.

Finally, since $\|2 - T_{B^{\log}[\vp_0]}\|_{L^2 \to L^2} \ge 2-C$,  and $\|B^{\log}[\vp](\cdot, t)\|_{\M_{(3,3)}}$
and $F\left(\|\varphi\|_{W^{4, \infty}} + \|L\varphi\|_{W^{4, \infty}}\right)$ are continuous in time, there exist $\Time>0$ and $m>0$, depending only on the initial data, such that
\[
\|2-T_{B^{\log}[\vp(t)]}\|_{L^2 \to L^2} \geq m \qquad \text{for $0\le t\le\Time$}.
\]
We therefore obtain that
\[
m^{2s+1}(\|\vp\|_{H^s}^2+\|\vp\|_{H^s}^2)\leq \tilde{E}^{(s)}\leq 2^{2s+1} (\|\vp\|_{H^s}^2+\|\vp\|_{H^s}^2),
\]
so \eqref{apest1} implies \eqref{apest}.
\end{proof}

\section{A priori estimates for the two-front GSQG ($1 < \alpha \leq 2$) systems}\label{sec:lwp12}
When $1 < \alpha \leq 2$, we write the two-front GSQG systems \eqref{eulersys} and \eqref{gsqgsys12} in the form \eqref{reg-GSQG-nc}, and define the energy
\[
\tilde{E}^{(s)}(t) = \|\vp(t)\|_{H^s}^2+\|\psi(t)\|_{H^s}^2.
\]

\begin{proposition}
Let $s \geq 3$ be an integer. Suppose $\vp_0, \psi_0\in H^s(\R)$ and
\[
2 h - \psi_0(x) + \vp_0(x) > 0\ \text{for all}\ x \in \R.
\]
Then for the smooth solutions to the initial value problem \eqref{reg-GSQG-nc},
\[
\frac{\diff}{\diff t}E^{(s)}(t)\leq P(E^{(s)}(t)),
\]
where $P$ is a positive polynomial.
\end{proposition}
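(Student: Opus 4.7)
The idea is a direct $L^2$-energy estimate at level $s$ applied to the non-conservative form \eqref{reg-GSQG-nc}. Writing $E^{(s)}=\|\vp\|_{H^s}^2+\|\psi\|_{H^s}^2$, I will apply $\partial_x^s$ to each equation, pair with $\partial_x^s\vp$ (resp.\ $\partial_x^s\psi$), integrate in $x$, and sum. The regime $1<\alpha\le 2$ is benign because (i) the Fourier multiplier $\L_1 \partial_x$ has symbol of order $2-\alpha\in[0,1)$ with real (up to the skew factor $i\xi$) symbol so it contributes zero at top order, (ii) $\L_2$ is smoothing (its symbol $b_2(\xi)$ decays like $e^{-2h|\xi|}$ by \eqref{L2Symb} and Lemma~\ref{BesselProp}(iv)), so the coupling through $\L_2\psi_x$ and $\L_2\vp_x$ yields a bound $\lesssim h^{-N}\|\psi\|_{H^{s-N}}\|\vp\|_{H^s}$ for any $N$, and (iii) the constant transport $v\,\vp_x,\;v\,\psi_x$ integrates by parts to zero after pairing.

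The main work is to control the two quasi-linear integral operators on each line of \eqref{reg-GSQG-nc}. First, for the self-interaction term
\[
\mathcal{N}_{\mathrm{self}}[\vp](x)=\int_\R\bigl[\vp_x(x+\zeta)-\vp_x(x)\bigr]\Bigl\{G\bigl(\sqrt{\zeta^2+[\vp(x+\zeta)-\vp(x)]^2}\bigr)-G(\zeta)\Bigr\}d\zeta,
\]
I will commute $\partial_x^s$ past the integral using the product and chain rules. The top-order term generated when $s$ derivatives all fall on one factor $\vp$ inside the kernel produces an expression of the form
\[
\int\bigl[\partial_x^s\vp(x+\zeta)-\partial_x^s\vp(x)\bigr]\,K[\vp](x,\zeta)\,d\zeta+\text{(lower order, by Leibniz)},
\]
where $K[\vp]$ is obtained by differentiating $G(\sqrt{\zeta^2+\delta^2})$ in the height variable; it is integrable in $\zeta$ uniformly in $\vp$ (integrable singularity $|\zeta|^{\alpha-1}$ at the origin thanks to the difference-quotient cancellation, and decay $|\zeta|^{\alpha-4}$ at infinity). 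Testing against $\partial_x^s\vp(x)$ and symmetrizing $\zeta\to-\zeta$ together with the change of variables $x\to x+\zeta$ eliminates the apparently top-order quadratic form and leaves an $L^2$-bounded bilinear expression in $\partial_x^s\vp$, with coefficients polynomial in $\|\vp\|_{W^{2,\infty}}$ and controlled $L^1_\zeta$-kernels; the remaining Leibniz terms are direct products of $\partial_x^a\vp\cdots\partial_x^b\vp$ with $a+b\le s+1$ and at most one factor of order $\ge s$, to which Kato-Ponce / Moser estimates apply. Both contributions are $\lesssim P(E^{(s)})$.

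For the cross term
\[
\mathcal{N}_{\mathrm{cross}}[\vp,\psi](x)=\int_\R\bigl[\psi_x(x+\zeta)-\vp_x(x)\bigr]\Bigl\{G\bigl(\sqrt{\zeta^2+(-2h+\psi(x+\zeta)-\vp(x))^2}\bigr)-G\bigl(\sqrt{\zeta^2+(2h)^2}\bigr)\Bigr\}d\zeta,
\]
the kernel and its derivatives in the height variable are smooth and bounded because the argument of $G$ is bounded away from zero by the chord-arc condition \eqref{ptwise}, which yields $2h-\psi(x+\zeta)+\vp(x)\ge m>0$ pointwise. Hence $G(\sqrt{\zeta^2+\cdots})$ is a smooth function of its height argument that decays at least like $|\zeta|^{\alpha-2}$ (integrably) as $|\zeta|\to\infty$, and after $\partial_x^s$ Leibniz the top-order contribution is a bounded bilinear form of $\partial_x^s\vp$, $\partial_x^s\psi$ with coefficients polynomial in $\|\vp\|_{W^{1,\infty}}$, $\|\psi\|_{W^{1,\infty}}$ and in $m^{-1}$; again this contributes $\lesssim P(E^{(s)})$. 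The analogous analysis applies to the $\psi$-equation by symmetry, and combining everything gives
\[
\tfrac{d}{dt}E^{(s)}(t)\le P\!\left(E^{(s)}(t)\right).
\]
The chief technical obstacle is the top-order cancellation for $\mathcal{N}_{\mathrm{self}}$: one must verify that after $s$ differentiations the only potentially unbounded contribution is a skew-symmetric bilinear form in $\partial_x^s\vp$, so that the pairing with $\partial_x^s\vp$ reduces to a commutator that, by $\zeta\to-\zeta$ symmetrization and the $C^{1,\alpha-1}$-type regularity of $G(\sqrt{\zeta^2+\delta^2})-G(\zeta)$, is $L^2$-bounded uniformly in $\vp$. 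Since $m^{-1}=\|2h-\psi+\vp\|_{L^\infty}^{-1}$ enters the coefficients, $P$ depends on the chord-arc quantity and on $h$, as reflected in the statement of Theorem~\ref{existence12}.
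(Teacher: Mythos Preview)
Your overall strategy---apply $\partial_x^s$ to \eqref{reg-GSQG-nc}, pair with $\partial_x^s\vp$, $\partial_x^s\psi$, and estimate the Leibniz pieces---is exactly the paper's approach, and your treatment of the linear terms ($v\partial_x$, $\L_1\partial_x$, $\L_2\partial_x$) is correct. However, there is a genuine gap in your handling of the nonlinear terms.

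You write that the top-order contribution in $\partial_x^s\mathcal{N}_{\mathrm{self}}[\vp]$ arises when ``$s$ derivatives all fall on one factor $\vp$ inside the kernel,'' producing $\int[\partial_x^s\vp(x+\zeta)-\partial_x^s\vp(x)]\,K[\vp](x,\zeta)\,d\zeta$. This is not the top-order term. In the Leibniz expansion
\[
\partial_x^s\mathcal{N}_{\mathrm{self}}[\vp]=\sum_{j=0}^{s}\binom{s}{j}\int\bigl[\partial_x^{j+1}\vp(x+\zeta)-\partial_x^{j+1}\vp(x)\bigr]\,\partial_x^{s-j}\!\left\{G\bigl(\sqrt{\zeta^2+\cdots}\bigr)-G(\zeta)\right\}d\zeta,
\]
the dangerous piece is $j=s$, where all derivatives hit the bracket $[\vp_x(x+\zeta)-\vp_x(x)]$ and produce $\partial_x^{s+1}\vp$, paired against the \emph{undifferentiated} kernel. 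The terms you describe (derivatives landing on the kernel) only generate $\partial_x^s\vp$ and are directly controllable by H\"older and Sobolev; the $j=s$ term is not. Your symmetrization $\zeta\mapsto-\zeta$, $x\mapsto x+\zeta$ applied to the $j=s$ term yields $\int\int[\Phi(x+\zeta)-\Phi(x)][\Phi_x(x+\zeta)-\Phi_x(x)]K_0\,d\zeta\,dx$ with $\Phi=\partial_x^s\vp$, which still contains $\Phi_x=\partial_x^{s+1}\vp$; one more integration by parts in $x$ (recognizing $[\Phi(x+\zeta)-\Phi(x)][\Phi_x(x+\zeta)-\Phi_x(x)]=\tfrac12\partial_x[\Phi(x+\zeta)-\Phi(x)]^2$) is needed to close. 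The paper instead uses $\partial_x^{s+1}\vp(x+\zeta)=\partial_\zeta\partial_x^s\vp(x+\zeta)$ and integrates by parts in $\zeta$ (and in $x$ for the $\partial_x^{s+1}\vp(x)$ piece), transferring the extra derivative onto the kernel; after that, the near-origin singularity is $|\zeta|^{\alpha-2}$, which is integrable precisely because $\alpha>1$. The same issue and fix apply to $\mathcal{N}_{\mathrm{cross}}$: the $j=s$ term carries $\partial_x^{s+1}\psi(x+\zeta)-\partial_x^{s+1}\vp(x)$ and requires the same integration by parts before one can invoke the chord-arc bound.

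A minor point: you state that the cross-kernel ``decays at least like $|\zeta|^{\alpha-2}$ (integrably)'' at infinity, but $|\zeta|^{\alpha-2}$ is \emph{not} integrable for $\alpha\in(1,2]$. Integrability comes from the subtraction of $G(\sqrt{\zeta^2+(2h)^2})$, which cancels the leading order and leaves decay $|\zeta|^{\alpha-4}$.
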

\begin{proof}
It is clear that under the assumption $\vp_0, \psi_0\in H^s(\R)$, the pointwise condition $2 h - \psi_0(x) + \vp_0(x) > 0$ implies that there exists some $c > 0$ and $\ve > 0$ such that for any $x \in \R$ and $-c < \zeta < c$, we have $|2 h - \psi_0(x + \zeta) + \vp(x)| > \ve$. Without loss of generality, we fix $c = 1$ in the rest of the proof.

Observe that $\tilde{E}^{(0)}(t)$ is conserved by the system. In the following, we fix $1 \leq k \leq s$. We directly estimate the $\dot H^k$ norm of $\varphi$ and $\psi$. We apply $\partial_x^k$ to \eqref{reg-GSQG-nc},
\begin{align*}
\begin{split}
& \partial_x^k\varphi_t(x, t) + v \partial_x^k \varphi_x(x, t) + \Theta_+ \partial_x^k\L_1 \varphi_x(x, t) + \Theta_- \partial_x^k\L_2 \psi_x(x, t)\\
& \quad + \Theta_+ \px^{k} \int_\R \left[G(\sqrt{\zeta^2+(\varphi(x + \zeta, t) - \varphi(x, t))^2})-G(|\zeta|)\right] \left[\varphi_x(x + \zeta, t) - \varphi_x(x, t)\right] \diff{\zeta}\\
& \quad+ \Theta_- \px^{k} \int_\R  \left[G(\sqrt{\zeta^2+(-2h+\psi(x+\zeta,t)-\vp(x,t))^2})-G(\sqrt{\zeta^2+(2h)^2})\right] \left[\psi_x(x + \zeta, t) - \varphi_x(x, t)\right] \diff{\zeta} = 0,\\
& \partial_x^k\psi_t(x, t) - v \partial_x^k\psi_x(x, t) + \Theta_- \partial_x^k\L_1 \psi_x(x, t) + \Theta_+\partial_x^k \L_2 \varphi_x(x, t)\\
& \quad + \Theta_- \px^{k} \int_\R \left[G(\sqrt{\zeta^2+(\psi(x + \zeta, t) - \psi(x, t))^2})-G(|\zeta|)\right] \left[\psi_x(x + \zeta, t) - \psi_x(x, t)\right] \diff{\zeta}\\
& \quad+ \Theta_+ \px^{k} \int_\R  \left[G(\sqrt{\zeta^2+(2h+\vp(x+\zeta,t)-\psi(x,t))^2})-G(\sqrt{\zeta^2+(2h)^2})\right] \left[\vp_x(x + \zeta, t) - \psi_x(x, t)\right] \diff{\zeta} = 0,
\end{split}
\end{align*}
multiply the first equation by $\partial_x^k\varphi$ and the second equation by $\partial_x^k\psi$, take the sum and integrate
with respect to $x$. The terms involving
\begin{align*}
& v \partial_x^k\varphi_x(x, t) + \Theta_+ \partial_x^k\L_1 \varphi_x(x, t),
\\
& - v \partial_x^k\psi_x(x, t) + \Theta_- \partial_x^k\L_1 \psi_x(x, t),
\end{align*}
vanish. Therefore, letting $C_k^i$ denote the binomial coefficients, we obtain
\begin{eqnarray}\nonumber
&&\frac{\diff}{\diff t} \int_{\R} \frac{1}2 |\partial_x^k\varphi|^2+\frac{1}2 |\partial_x^k\psi|^2\\\nonumber
&=&-\int_{\R} \Theta_- \L_2 \partial_x^{k+1}\psi(x, t) \partial_x^k\varphi(x,t)  + \Theta_+ \L_2 \partial_x^{k+1}\varphi(x, t) \partial_x^k\psi(x,t) \diff x\\\nonumber
&&  -\sum\limits_{i=0}^k C_k^i\Theta_+ \iint_{\R^2}\partial_x^k\varphi(x,t) \big[\partial_x^i\varphi_x(x + \zeta, t) - \partial_x^i\varphi_x(x, t)\big] \partial_x^{k-i}\left[G(\sqrt{\zeta^2+(\vp(x+\zeta, t)-\vp(x,t))^2})-G(\zeta)\right]\diff{\zeta}\diff x\\\nonumber
&& - \sum\limits_{i=0}^k C_k^i \Theta_- \iint_{\R^2}\partial_x^k\varphi(x,t) \big[\partial_x^i\psi_x(x + \zeta, t) - \partial_x^i\varphi_x(x, t)\big] \\\nonumber
&&\qquad\cdot\partial_x^{k-i}\left[G(\sqrt{\zeta^2+(-2h+\psi(x+\zeta, t)-\vp(x,t))^2})-G(\sqrt{\zeta^2+(2h)^2})\right] \diff{\zeta}\diff x\\\nonumber
&&  - \sum\limits_{i=0}^k C_k^i \Theta_- \iint_{\R^2}\partial_x^k\psi(x,t) \big[\partial_x^i\psi_x(x + \zeta, t) - \partial_x^i\psi_x(x, t)\big]\partial_x^{k-i} \left[G(\sqrt{\zeta^2+(\psi(x + \zeta, t) - \psi(x, t))^2})-G(|\zeta|)\right] \diff{\zeta}\diff x\\\nonumber
&&  - \sum\limits_{i=0}^k C_k^i \Theta_+ \iint_{\R^2}\partial_x^k\psi(x,t) \big[\partial_x^i\varphi_x(x + \zeta, t) - \partial_x^i\psi_x(x, t)\big]\\\label{kthenergy}
&&\qquad\cdot \partial_x^{k-i} \left[G(\sqrt{\zeta^2 + (2h + \vp(x + \zeta, t) - \psi(x, t))^2}) - G(\sqrt{\zeta^2 + (2h)^2})\right] \diff{\zeta}\diff x.
\end{eqnarray}
Then we estimate each term on the right-hand-side.

{\bf 1.} By Lemma \ref{BesselProp}, the first integral on the right-hand-side is bounded by
\[
\bigg|\int_{\R} \Theta_- \L_2 \partial_x^{k+1}\psi(x, t) \partial_x^k\varphi(x,t)  + \Theta_+ \L_2 \partial_x^{k+1}\varphi(x, t) \partial_x^k\psi(x,t) \diff x\bigg| \lesssim (|\Theta_+|+|\Theta_-|) \|\partial_x^k\psi\|_{L^2}\|\partial_x^k\vp\|_{L^2}.
\]

{\bf 2. } A term in sum of the second integral on the right-hand-side of \eqref{kthenergy} is
\begin{align*}
\begin{split}
&\iint_{\R^2}\partial_x^k\varphi(x,t) \big[\partial_x^i\varphi_x(x + \zeta, t) - \partial_x^i\varphi_x(x, t)\big] \partial_x^{k-i}\left[G(\sqrt{\zeta^2+(\vp(x+\zeta, t)-\vp(x,t))^2})-G(\zeta)\right] \diff{\zeta}\diff x\\
=~&\iint_{\R^2}\partial_x^k\varphi(x,t) \big[\partial_x^i\varphi_x(x + \zeta, t) - \partial_x^i\varphi_x(x, t)\big]\\
&\cdot \partial_x^{k-i-1}\left[G'(\sqrt{\zeta^2+(\vp(x+\zeta, t)-\vp(x,t))^2})\frac{\vp(x+\zeta, t)-\vp(x,t)}{\sqrt{\zeta^2+(\vp(x+\zeta, t)-\vp(x,t))^2}}(\varphi_x(x + \zeta, t) - \varphi_x(x, t))\right] \diff{\zeta}\diff x.
\end{split}
\end{align*}

{\bf (a)} For $i=0, \dotsm, k-1$, we have
\begin{align*}
\begin{split}
&\left|\iint_{\R^2}\partial_x^k\varphi(x,t) \big[\partial_x^i\varphi_x(x + \zeta, t) - \partial_x^i\varphi_x(x, t)\big] \partial_x^{k-i} \left[G(\sqrt{\zeta^2+(\vp(x+\zeta, t)-\vp(x,t))^2})-G(\zeta)\right] \diff x\diff{\zeta}\right|\\
\lesssim & \|\partial_x^k\varphi(t)\|_{L^2}\cdot \int_{\R}\bigg(\int_{\R} \bigg\{\big[\partial_x^i\varphi_x(x + \zeta, t) - \partial_x^i\varphi_x(x, t)\big]\\
&\cdot \partial_x^{k-i-1}\bigg[G'(\sqrt{\zeta^2+(\vp(x+\zeta, t)-\vp(x,t))^2})\frac{\vp(x+\zeta, t)-\vp(x,t)}{\sqrt{\zeta^2+(\vp(x+\zeta, t)-\vp(x,t))^2}}(\partial_x\varphi(x + \zeta, t) -\partial_x \varphi(x, t))\bigg] \bigg\}^2\diff x\bigg)^{1/2}\diff{\zeta}.
\end{split}
\end{align*}
We divide the above integral into two parts: $|\zeta|>1$ and $|\zeta|\leq1$, and write
\begin{align}\label{Gprime}
G'(x)=\frac{C_\alpha}{|x|^{2-\alpha}x},
\end{align}
where $C_2=-1/{2\pi}$ and $C_\alpha=\alpha-2$ when $\alpha\in(1,2)$.

For $|\zeta|>1$, we observe that the term
\begin{multline*}
\big[\partial_x^i\varphi_x(x + \zeta, t) - \partial_x^i\varphi_x(x, t)\big]\\
\cdot \partial_x^{k-i-1}\left[G'(\sqrt{\zeta^2+(\vp(x+\zeta, t)-\vp(x,t))^2})\frac{\vp(x+\zeta, t)-\vp(x,t)}{\sqrt{\zeta^2+(\vp(x+\zeta, t)-\vp(x,t))^2}}(\partial_x\varphi(x + \zeta, t) -\partial_x \varphi(x, t))\right]
\end{multline*}
contains at most $k$-th order derivatives, and
\begin{align}\label{Gprime2}
\frac{G'(\sqrt{\zeta^2+(\vp(x+\zeta, t)-\vp(x,t))^2})}{\sqrt{\zeta^2+(\vp(x+\zeta, t)-\vp(x,t))^2}}=\frac{C_\alpha}{[\zeta^2+(\vp(x+\zeta, t)-\vp(x,t))^2]^{\frac{4-\alpha}2}}.
\end{align}
Using H\"older's inequality, with $L^2$-norms for the highest derivatives and $L^\infty$-norms for the other terms, we get that
\begin{align*}
\begin{split}
&\left|\int_{|\zeta|>1}\int_{\R}\partial_x^k\varphi(x,t) \big[\partial_x^i\varphi_x(x + \zeta, t) - \partial_x^i\varphi_x(x, t)\big] \partial_x^{k-i} \left[G(\sqrt{\zeta^2+(\vp(x+\zeta, t)-\vp(x,t))^2})-G(\zeta)\right] \diff x\diff{\zeta}\right|\\
\lesssim~ & \|\varphi(t)\|_{H^k}\|\varphi(t)\|_{H^k}\cdot  P\left(\|\vp\|_{W^{\left[\frac{k}2\right]+1,\infty}}\right) \cdot \int_{|\zeta|>1}\frac1{|\zeta|^2}\diff{\zeta}.
\end{split}
\end{align*}
where $P$ is a positive polynomial.

For $|\zeta|<1$, letting $c_{i_1 i_2 i_3}$ denote multilinear coefficients, we can expand the higher-order derivatives as
\begin{align*}
 &\int_{|\zeta|<1}\bigg(\int_{\R} \bigg\{\big[\partial_x^i\varphi_x(x + \zeta, t) - \partial_x^i\varphi_x(x, t)\big]\\
&\quad\cdot \partial_x^{k-i-1}\bigg[G'(\sqrt{\zeta^2+(\vp(x+\zeta, t)-\vp(x,t))^2})\frac{\vp(x+\zeta, t)-\vp(x,t)}{\sqrt{\zeta^2+(\vp(x+\zeta, t)-\vp(x,t))^2}}(\partial_x\varphi(x + \zeta, t) -\partial_x \varphi(x, t))\bigg] \bigg\}^2\diff x\bigg)^{1/2}\diff{\zeta}\\
=&\int_{|\zeta|<1}\bigg(\int_{\R} \bigg\{\big[\partial_x^i \varphi_x(x + \zeta, t) - \partial_x^i\varphi_x(x, t)\big]\cdot \sum\limits_{i_1+i_2+i_3=k-i-1} \\
&c_{i_1i_2i_3}\bigg[\partial_x^{i_1} \frac{G'(\sqrt{\zeta^2+(\vp(x+\zeta, t)-\vp(x,t))^2})}{\sqrt{\zeta^2+(\vp(x+\zeta, t)-\vp(x,t))^2}} \cdot \partial_x^{i_2}(\vp(x+\zeta, t)-\vp(x,t))\cdot \partial_x^{i_3}(\partial_x\varphi(x + \zeta, t) -\partial_x \varphi(x, t))\bigg] \bigg\}^2\diff x\bigg)^{1/2}\diff{\zeta}.
\end{align*}
When $i_3\neq k-1$, the above integral is bounded approximately by $\|\varphi(t)\|_{H^k}\cdot  P\left(\|\vp\|_{W^{\left\lfloor\frac{k}2\right\rfloor+1,\infty}}\right)$, where $P$ is a positive polynomial, and we get $i_3=k-1$ only when $i=i_1=i_2=0$. By using \eqref{Gprime2}, the integral is
\begin{align*}
&\int_{|\zeta|<1}\bigg(\int_{\R} \bigg\{\big[\varphi_x(x + \zeta, t) - \varphi_x(x, t)\big]\cdot\\
&\bigg[\frac{G'(\sqrt{\zeta^2+(\vp(x+\zeta, t)-\vp(x,t))^2})}{\sqrt{\zeta^2+(\vp(x+\zeta, t)-\vp(x,t))^2}} \cdot (\vp(x+\zeta, t)-\vp(x,t))\cdot \partial_x^{k-1}(\partial_x\varphi(x + \zeta, t) -\partial_x \varphi(x, t))\bigg] \bigg\}^2\diff x\bigg)^{1/2}\diff{\zeta}\\
=~&\int_{|\zeta|<1}\bigg(\int_{\R} \bigg\{\frac{\varphi_x(x + \zeta, t) - \varphi_x(x, t)}{\zeta}\cdot\\
&\bigg[\frac{C_\alpha}{\zeta^{2-\alpha}[1+(\frac{\vp(x+\zeta, t)-\vp(x,t)}{\zeta})^2]^{\frac{4-\alpha}2}} \cdot \frac{\varphi(x + \zeta, t) - \varphi(x, t)}{\zeta}\cdot \partial_x^{k-1}[\partial_x\varphi(x + \zeta, t) -\partial_x \varphi(x, t)]\bigg] \bigg\}^2\diff x\bigg)^{1/2}\diff{\zeta}.
\end{align*}
Since $1/{\zeta^{2-\alpha}}$ is integrable on $|\zeta|<1$, the above integral is bounded by $\|\varphi(t)\|_{H^k}\cdot  P\left(\|\vp\|_{W^{\left\lfloor\frac{k}2\right\rfloor+1,\infty}}\right)$, where $P$ is a positive polynomial.

{\bf (b)} For $i= k$, after integrating by parts, we can put one derivative on $G$,
\begin{eqnarray}\label{pkenergy}\nonumber
&&\iint_{\R^2}\partial_x^k\varphi(x,t) \big[\partial_x^k\varphi_x(x + \zeta, t) - \partial_x^k\varphi_x(x, t)\big] \left[G(\sqrt{\zeta^2+(\vp(x+\zeta, t)-\vp(x,t))^2})-G(\zeta)\right] \diff{\zeta}\diff x \\\nonumber
&=~&\iint_{\R^2}\partial_x^k\varphi(x,t) \partial_\zeta\partial_x^k\varphi(x + \zeta, t)  \left[G(\sqrt{\zeta^2+(\vp(x+\zeta, t)-\vp(x,t))^2})-G(\zeta)\right] \diff{\zeta}\diff x   \\ \nonumber
&&-  \iint_{\R^2}\partial_x^k\varphi(x,t)\partial_x^k\varphi_x(x, t) \left[G(\sqrt{\zeta^2+(\vp(x+\zeta, t)-\vp(x,t))^2})-G(\zeta)\right] \diff{\zeta}\diff x \\\nonumber
&=~&-\iint_{\R^2}\partial_x^k\varphi(x,t) \partial_x^k\varphi(x + \zeta, t)   \partial_\zeta \left[G(\sqrt{\zeta^2+(\vp(x+\zeta, t)-\vp(x,t))^2})-G(\zeta)\right] \diff{\zeta}\diff x   \\
&&+\frac12  \iint_{\R^2}\partial_x^k\varphi(x,t)\partial_x^k\varphi(x, t)  \partial_x \left[G(\sqrt{\zeta^2+(\vp(x+\zeta, t)-\vp(x,t))^2})-G(\zeta)\right] \diff{\zeta}\diff x \\\nonumber
&=~&\iint_{\R^2}\partial_x^k\varphi(x,t) \partial_x^k\varphi(x +\zeta, t)\\\nonumber
&&\qquad \cdot \bigg[G'(\sqrt{\zeta^2+(\vp(x+\zeta, t)-\vp(x,t))^2})\frac{\zeta+[\vp(x+\zeta, t)-\vp(x,t)][\partial_\zeta\vp(x+\zeta, t)]}{\sqrt{\zeta^2+(\vp(x+\zeta, t)-\vp(x,t))^2}}-G'(\zeta)\bigg]   \diff{\zeta}\diff x   \\ \nonumber
&&+\frac12  \iint_{\R^2}\partial_x^k\varphi(x,t)\partial_x^k\varphi(x, t)  G'(\sqrt{\zeta^2+(\vp(x+\zeta, t)-\vp(x,t))^2})\\\nonumber
&&\qquad\cdot\frac{\vp(x+\zeta, t)-\vp(x,t)}{\sqrt{\zeta^2+(\vp(x+\zeta, t)-\vp(x,t))^2}}\left(\partial_x\varphi(x + \zeta, t) -\partial_x \varphi(x, t)\right)  \diff{\zeta}\diff x.
\end{eqnarray}
By using \eqref{Gprime} again, we get
\begin{multline*}
G'(\sqrt{\zeta^2+(\vp(x+\zeta, t)-\vp(x,t))^2})\frac{\zeta+[\vp(x+\zeta, t)-\vp(x,t)][\partial_\zeta\vp(x+\zeta, t)]}{\sqrt{\zeta^2+(\vp(x+\zeta, t)-\vp(x,t))^2}}-G'(\zeta)\\
=C_\alpha\frac{\zeta+[\vp(x+\zeta, t)-\vp(x,t)][\partial_\zeta\vp(x+\zeta, t)]}{[\zeta^2+(\vp(x+\zeta, t)-\vp(x,t))^2]^{\frac{4-\alpha}{2}}}-C_\alpha\frac{1}{\zeta|\zeta|^{2-\alpha}}.
\end{multline*}

We divide the integral into $|\zeta|>1$ and $|\zeta|<1$. For $|\zeta|>1$,
\begin{align*}
\begin{split}
&\left|\int_{\R}\int_{|\zeta|>1}\partial_x^k\varphi(x,t) \big[\partial_x^k\varphi_x(x + \zeta, t) - \partial_x^k\varphi_x(x, t)\big] \left[G(\sqrt{\zeta^2+(\vp(x+\zeta, t)-\vp(x,t))^2})-G(\zeta) \right] \diff{\zeta}\diff x\right|\\
\lesssim~& \|\partial_x^k\vp\|_{L^2}^2\|\vp\|_{L^\infty}\|\vp\|_{W^{1,\infty}}.
\end{split}
\end{align*}

For $|\zeta|<1$, by \eqref{pkenergy}
\begin{align*}
\begin{split}
&\left|\int_{\R}\int_{|\zeta|<1}\partial_x^k\varphi(x,t) \big[\partial_x^k\varphi_x(x + \zeta, t) - \partial_x^k\varphi_x(x, t)\big] \left[G(\sqrt{\zeta^2+(\vp(x+\zeta, t)-\vp(x,t))^2})-G(\zeta)\right] \diff{\zeta}\diff x\right|\\
\lesssim~& \|\partial_x^k\vp\|_{L^2}^2\|\vp\|_{W^{2,\infty}}\|\vp\|_{W^{1,\infty}}.
\end{split}
\end{align*}

{\bf 3.} To estimate the third integral on the right-hand-side of \eqref{kthenergy}, it suffices to estimate
\begin{align}\label{kthint}
\begin{split}
 &\iint_{\R^2}\partial_x^k\varphi(x,t) \big[\partial_x^i\psi_x(x + \zeta, t) - \partial_x^i\varphi_x(x, t)\big]
 \\
 &\qquad \cdot\partial_x^{k-i}\left[G(\sqrt{\zeta^2+(-2h+\psi(x+\zeta, t)-\vp(x,t))^2})-G(\sqrt{\zeta^2+(2h)^2})\right] \diff{\zeta}\diff x.
 \end{split}
\end{align}
Since $h>0$, the above integrand does not have singularity at $\zeta=0$, and we only need to take care of large $|\zeta| \gg 1$.

{\bf (a)} When $i = k$, direct calculation yields
\begin{align*}
 &\iint_{\R^2}\partial_x^k\varphi(x,t) \big[\partial_x^k\psi_x(x + \zeta, t) - \partial_x^k\varphi_x(x, t)\big]\cdot\left[G(\sqrt{\zeta^2+(-2h+\psi(x+\zeta, t)-\vp(x,t))^2})-G(\sqrt{\zeta^2+(2h)^2})\right] \diff{\zeta}\diff x\\
 =~&\iint_{\R^2}\bigg[\partial_x^k\varphi(x,t)\partial_\zeta\partial_x^k\psi(x + \zeta, t) - \frac12\partial_x(\partial_x^k\varphi(x,t)^2)\bigg]\cdot\left[G(\sqrt{\zeta^2+(-2h+\psi(x+\zeta, t)-\vp(x,t))^2})-G(\sqrt{\zeta^2+(2h)^2})\right] \diff{\zeta}\diff x\\
 =~&\iint_{\R^2}- \partial_x^k\varphi(x,t)\partial_x^k\psi(x + \zeta, t) \cdot \partial_\zeta\left[G(\sqrt{\zeta^2+(-2h+\psi(x+\zeta, t)-\vp(x,t))^2})-G(\sqrt{\zeta^2+(2h)^2})\right]\\
 & + \frac12(\partial_x^k\varphi(x,t)^2) \partial_x\left[G(\sqrt{\zeta^2+(-2h+\psi(x+\zeta, t)-\vp(x,t))^2})-G(\sqrt{\zeta^2+(2h)^2})\right] \diff{\zeta}\diff x.
\end{align*}
By \eqref{Gprime}, we have
\begin{align*}
&\partial_\zeta\left[G(\sqrt{\zeta^2+(-2h+\psi(x+\zeta, t)-\vp(x,t))^2})-G(\sqrt{\zeta^2+(2h)^2})\right]\\
=~&G'(\sqrt{\zeta^2+(-2h+\psi(x+\zeta, t)-\vp(x,t))^2})\frac{\zeta+(-2h+\psi(x+\zeta, t)-\vp(x,t))\partial_\zeta\psi(x+\zeta, t)}{\sqrt{\zeta^2+(-2h+\psi(x+\zeta, t)-\vp(x,t))^2}}\\
&-G'(\sqrt{\zeta^2+(2h)^2})\frac{\zeta}{\sqrt{\zeta^2+(2h)^2}}\\
=~&C_\alpha\frac{\zeta+(-2h+\psi(x+\zeta, t)-\vp(x,t))\partial_\zeta\psi(x+\zeta, t)}{[\zeta^2+(-2h+\psi(x+\zeta, t)-\vp(x,t))^2]^{\frac{4-\alpha}2}}-C_\alpha\frac{\zeta}{(\zeta^2+(2h)^2)^{\frac{4-\alpha}2}}.
\end{align*}
When $|\zeta|>1$, the above fractions are bounded by
\begin{align*}
&\left|\partial_\zeta\left[G(\sqrt{\zeta^2+(-2h+\psi(x+\zeta, t)-\vp(x,t))^2})-G(\sqrt{\zeta^2+(2h)^2})\right]\right|\\
\lesssim ~&(1+\|\vp\|_{W^{1,\infty}}+\|\psi\|_{W^{1,\infty}})^2[\zeta^2+(-2h+\psi(x+\zeta, t)-\vp(x,t))^2]^{-\frac{2-\alpha}2},
\end{align*}
and when $|\zeta|<1$ by
\begin{align*}
&\left|\partial_\zeta\left[G(\sqrt{\zeta^2+(-2h+\psi(x+\zeta, t)-\vp(x,t))^2})-G(\sqrt{\zeta^2+(2h)^2})\right]\right|\\
\lesssim ~&\frac{1+(h+\|\vp\|_{L^\infty}+\|\psi\|_{L^\infty})(\|\vp\|_{W^{1,\infty}}+\|\psi\|_{W^{1,\infty}})}{[\zeta^2 + (2h- (\psi(x + \zeta, t) - \varphi(x, t)))^2]^{\frac{4-\alpha}2}}.
\end{align*}

Similarly, when $|\zeta|>1$
\begin{align*}
&\left|\partial_x\left[G(\sqrt{\zeta^2+(-2h+\psi(x+\zeta, t)-\vp(x,t))^2})-G(\sqrt{\zeta^2+(2h)^2})\right]\right|\\
=~&\left|G'(\sqrt{\zeta^2+(-2h+\psi(x+\zeta, t)-\vp(x,t))^2})\frac{(-2h+\psi(x+\zeta, t)-\vp(x,t))(\psi_x(x+\zeta, t)-\vp_x(x,t))}{[\zeta^2+(-2h+\psi(x+\zeta, t)-\vp(x,t))^2]^{1/2}}\right|\\
=~& \left|\frac{C_\alpha (-2h+\psi(x+\zeta, t)-\vp(x,t))(\psi_x(x+\zeta, t)-\vp_x(x,t))}{(\zeta^2+(-2h+\psi(x+\zeta, t)-\vp(x,t))^2)^{\frac{4-\alpha}{2}}} \right|\\
\lesssim~&(h+\|\vp\|_{L^\infty}+\|\psi\|_{L^\infty})\|\vp\|_{W^{1,\infty}} \left(\zeta^2+(-2h+\psi(x+\zeta, t)-\vp(x,t))^2\right)^{-\frac{4-\alpha}{2}},
\end{align*}
and when $|\zeta|<1$
\begin{align*}
&\left|\partial_x\left[G(\sqrt{\zeta^2+(-2h+\psi(x+\zeta, t)-\vp(x,t))^2})-G(\sqrt{\zeta^2+(2h)^2})\right]\right|\\
\lesssim~&\frac{(1+h+\|\vp\|_{L^\infty}+\|\psi\|_{L^\infty})\|\vp\|_{W^{1,\infty}}}{[\zeta^2 + (2h- (\psi(x + \zeta, t) - \varphi(x, t)))^2]^{\frac{4-\alpha}2}}.
\end{align*}

By the above calculations, the decay rate for large $|\zeta|$ is at least $|\zeta|^{-2}$, which is integrable. Then
\begin{align*}
 &\left|\iint_{\R^2}\partial_x^k\varphi(x,t) \big[\partial_x^k\psi_x(x + \zeta, t) - \partial_x^k\varphi_x(x, t)\big]\cdot\left[G(\sqrt{\zeta^2+(-2h+\psi(x+\zeta, t)-\vp(x,t))^2})-G(\sqrt{\zeta^2+(2h)^2})\right] \diff{\zeta}\diff x\right|\\
\lesssim~& \|\vp\|_{H^k}(\|\psi\|_{H^k}+\|\vp\|_{H^k})(1+\|\vp\|_{W^{1,\infty}}+\|\psi\|_{W^{1,\infty}})^2(1+\|[(2h- (\psi(x + \zeta, t) - \varphi(x, t)))^2]^{-\frac{4-\alpha}2}\|_{L^\infty_x L^\infty_{|\zeta|< 1}}).
\end{align*}

{\bf (b)} When $i=0,\dotsm, k-1$, we can expand the higher order derivatives in \eqref{kthint}, and then
\begin{align*}
|\eqref{kthint}|& \lesssim \left|\iint_{\R^2}\partial_x^k\varphi(x,t) \big[\partial_x^i\psi_x(x + \zeta, t) - \partial_x^i\varphi_x(x, t)\big]\cdot\partial_x^{k-i-1}\frac{ 2[2h- (\psi(x + \zeta, t) - \varphi(x, t))][\partial_x\vp(x , t) ]}{(\zeta^2 + [2h- (\psi(x + \zeta, t) - \varphi(x, t))]^2)^{\frac{4 - \alpha}{2}}} \diff{\zeta}\diff x\right|\\
&\lesssim \|\varphi(t)\|_{H^k}\|\psi(t)\|_{H^k}\cdot  P\Big(\|\vp\|_{W^{\left\lfloor\frac{k}2\right\rfloor+1,\infty}}+\|\psi\|_{W^{\left\lfloor\frac{k}2\right\rfloor+1,\infty}}\Big)(1+\|[(2h- (\psi(x + \zeta, t) - \varphi(x, t)))^2]^{-\frac{4-\alpha}2}\|_{L^\infty_x L^\infty_{|\zeta|< 1}}),
\end{align*}
where $P$ is a positive polynomial.

{\bf 4.} The last two integrals in \eqref{kthenergy} can be estimated in a similar way.
Since
\[
1+\left\|[(2h- (\psi(x + \zeta, t) - \varphi(x, t)))^2]^{-\frac{4-\alpha}2}\right\|_{L^\infty_x L^\infty_{|\zeta|< 1}} < M
\]
at the initial time for some constant $M$, by a continuity argument there is a time $T_1>0$, such that
\[
1 + \left\|[(2h- (\psi(x + \zeta, t) - \varphi(x, t)))^2]^{-\frac{4-\alpha}2}\right\|_{L^\infty_x L^\infty_{|\zeta|< 1}} < 2M
\]
for $t\in[0,T_1)$,

Therefore, we obtain that
\[
\frac{\diff}{\diff t} \left(\|\vp(t)\|_{H^s}^2+\|\psi(t)\|_{H^s}^2\right) \lesssim MP\Big(\|\vp\|_{W^{\left\lfloor\frac{s}2\right\rfloor+1,\infty}}+\|\psi\|_{W^{\left\lfloor\frac{s}2\right\rfloor+1,\infty}}\Big)  \left(\|\vp(t)\|_{H^s}^2+\|\psi(t)\|_{H^s}^2\right),
\]
and by Sobolev embedding, we get that
\[
\frac{\diff}{\diff t} \left(\|\vp(t)\|_{H^s}^2+\|\psi(t)\|_{H^s}^2\right) \lesssim MP\left(\|\vp(t)\|_{H^s}+\|\psi(t)\|_{H^s}\right).
\]
Therefore, we have a local-in-time energy estimate: there exist $T_2>0$ such that $\|\vp(t)\|_{H^s}^2+\|\psi(t)\|_{H^s}^2<+\infty$ for $t\in[0,T_2)$. Taking $T=\min\{T_1, T_2\}$, we obtain the \emph{a priori} energy estimate in the time interval $[0,T)$.
\end{proof}

\end{document}